\definecolor{darkblue}{rgb}{0,0,0.9}
\definecolor{lightblue}{rgb}{.5,.5,.9}
\DeclareMathOperator{\TO}{TO}
\DeclareMathOperator{\To}{to}
\DeclareMathOperator{\Sp}{sp}
\newcommand{\etp}{\, \wh \otimes \,}
\newcommand{\idbb}{\mathds{1}}                              
\newcommand{\Omegapmone}{\Omega _{\pm \idbb}SO_{2k}}       
\newcommand{\Omegamin}{\Omega_{\pm \idbb}^{\mathrm{min}} SO_{2k}}
\newcommand{\TB}{\operatorname{TB}}
\newcommand{\FSOcrossSO}{\text{Fr}(E) \times _{SO_{2k}} SO_{2k}}
\newcommand{\Fr}{\operatorname{Fr}}
\newcommand{\Turn}{\mathrm{Turn}}   
\newcommand{\Map}{\mathrm{Map}}                
\newcommand{\Turnmin}{\Turn^{\mathrm{min}}}
\newcommand{\ad}{\mathrm{ad}}
\newcommand{\GG}{\mathcal{G}}
\newcommand{\cc}{\mathrm{cc}}
\providecommand\@dotsep{5}
\def\listtodoname{List of Todos}
\def\listoftodosfix{\@starttoc{tdo}\listtodoname}
\newcommandx{\unsure}[2][1=]{\todo[linecolor=red,backgroundcolor=red!25,bordercolor=red,#1]{#2}}
\newcommandx{\change}[2][1=]{\todo[linecolor=blue,backgroundcolor=blue!25,bordercolor=blue,#1]{#2}}
\newcommandx{\info}[2][1=]{\todo[linecolor=OliveGreen,backgroundcolor=OliveGreen!25,bordercolor=OliveGreen,#1]{#2}}
\newcommandx{\improvement}[2][1=]{\todo[linecolor=Plum,backgroundcolor=Plum!25,bordercolor=Plum,#1]{#2}}
\newcommandx{\thiswillnotshow}[2][1=]{\todo[disable,#1]{#2}}
\numberwithin{equation}{section}
\newcommand\C{\mathbb{C}}
\newcommand\R{\mathbb{R}}
\newcommand\Z{\mathbb{Z}}
\newcommand\del{\partial}
\newcommand{\Aut}{\textup{Aut}}
\newcommand{\id}{\textup{Id}}
\newcommand{\pt}{\textup{pt}}
\newcommand{\an}[1]{\langle{#1}\rangle}
\newcommand{\An}[1]{\left\langle{#1}\right\rangle}
\newcommand{\wt}{\widetilde}
\newcommand{\wh}{\widehat}
\newcommand{\ol}{\overline}
\newcommand{\ul}{\underline}
\newcommand{\xra}{\xrightarrow}
\theoremstyle{plain}
\newtheorem{theorem}{Theorem}[section]
\newtheorem{lemma}[theorem]{Lemma}
\newtheorem{corollary}[theorem]{Corollary}
\newtheorem{proposition}[theorem]{Proposition}
\theoremstyle{definition}
\newtheorem{definition}[theorem]{Definition}
\newtheorem{example}[theorem]{Example}
\theoremstyle{remark}
\newtheorem{remark}[theorem]{Remark}
\newtheorem{question}[theorem]{Question}
\title{Turning vector bundles}
\author{Diarmuid Crowley, Csaba Nagy, Blake Sims and Huijun Yang}
\date{\today}
\begin{document}
\maketitle

\begin{abstract}
We define a turning of a rank-$2k$ vector bundle $E \to B$ to be a homotopy
of bundle automorphisms  $\psi_t$ from $\idbb_E$, the identity of $E$, 
to  $-\idbb_E$, minus the identity, 
and call a pair $(E, \psi_t)$ a turned bundle.
We investigate when vector bundles admit turnings
and develop 
the theory of turnings and their obstructions.
In particular, we determine which rank-$2k$ bundles over the $2k$-sphere
are turnable.

If a bundle is turnable, then it is orientable.
In the other direction, 
complex bundles are turned bundles and  
for bundles over finite $CW$-complexes with rank in the stable range, 
Bott's proof of his periodicity theorem shows that
a turning of $E$ defines a homotopy class of complex structure on $E$.
On the other hand, we give examples of rank-$2k$ bundles over $2k$-dimensional spaces,
including the tangent bundles of some $2k$-manifolds, which are turnable
but do not admit a complex structure.  Hence turned bundles can be viewed
as generalisations of complex bundles.

We also generalise the definition of turning to other settings, including other paths of automorphisms, 
and we relate the generalised turnability of vector bundles to the
topology of their gauge groups and the computation of certain Samelson products.

\end{abstract}

\section{Introduction}  \label{s:intro}
Let $\pi \colon E \to B$ be a real Euclidean vector bundle over a base space $B$, 
which for simplicity we assume is connected.
The bundle $E$ has two canonical automorphisms: $\idbb_E$, the identity
and $-\idbb_E$, the automorphism which takes a vector to its negative.
A {\em turning} of $E$ is a continuous path $\psi_t$ of bundle automorphisms from $\idbb_E$ to 
$-\idbb_E$: if a turning of $E$ exists, we call $E$ {\em turnable} and the pair $(E, \psi_t)$
a {\em turned} vector bundle.
The {\em turning problem} for $E$ is 
to determine whether $E$ is turnable.

While the turning problem is a natural topological problem
amenable to classical methods in algebraic topology, to the best of our knowledge it 
has not been explicitly discussed in the literature.  
Our primary interest in turnings stems from the fact that they generalise complex structures.
As we explain in Section~\ref{s:stable}, Bott's original proof of his periodicity theorem, 
shows for bundles over finite $CW$-complexes that stable turnings are equivalent 
to stable complex structures.  On the other hand, we discovered that there are unstable bundles which are turnable
but do not admit a complex structure; e.g.\ see the discussion following Theorem~\ref{t:spheres}.   
Hence there is a sequence of strict inclusions:
\[ \{Complex~bundles\} \subsetneq \{Turned~bundles\} \subsetneq \{Stably~complex~bundles\} \]

The turning problem and its generalisations also arise naturally in the study of gauge groups.
Turnings are closely related to the group of components and the fundamental group
of the gauge group associated to $E$, and by studying turnings we can gain information about the 
low-dimensional topology of gauge groups; see e.g.\ Theorem~\ref{t:eta-intro}. 
The generalised turning problem for loops in the structure group is also related certain
Samelson products and our results on turnings lead to new computations of Samelson
products, which have implications for the high-dimensional topology of
certain gauge groups; see Proposition~\ref{p:GGE_eta}.

\subsection{Turnability of vector bundles}
We begin with some elementary remarks on the turning problem. 
Given $b \in B$, let $E_b := \pi^{-1}(b)$ be the fiber over $b$, which is a Euclidean vector 
space.
Since a turning of $E$ restricts to a path from $\idbb_{E_b}$ to $-\idbb_{E_b}$,
if $E$ is turnable then the rank of $E$ must be even.
Moreover, a turning of $E$ can be used to continuously orient each fibre $E_b$.
Hence if $E$ is turnable, then $E$ is orientable; see the discussion prior to Lemma \ref{l:tble-oble}.
On the other hand, if $E$ admits a complex structure then the map $t \mapsto e^{i\pi t}\idbb_E$
defines a turning of $E$ and so complex bundles are turned bundles.
Since oriented rank-$2$ bundles are equivalent to complex line bundles, they are turnable
and so we assume $k > 1$, unless stated otherwise.

We next discuss the turning problem stably.
Suppose that the base space $B$ is (homotopy equivalent to) a finite $CW$-complex, 
let $\ul \R^j$ denote the trivial rank-$j$ bundle over $B$ and let $E \oplus \ul \R^j$ denote the Whitney sum of $E$ and $\ul \R^j$.
We say that $E$ is {\em stably turnable} if $E \oplus \ul \R^j$ is turnable for some $j \geq 0$ 
and similarly we say that $E$ admits a stable complex structure if 
$E \oplus \ul \R^j$ admits a complex structure for some $j \geq 0$.
Then we have the following result (see Theorem \ref{t:tc} for a more refined statement).

\begin{theorem} \label{t:1}
Let $E \to B$ be a vector bundle over a finite-dimensional
$CW$-complex.  Then $E$ is stably turnable if and only if 
$E$ admits a stable complex structure.
\end{theorem}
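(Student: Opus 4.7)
The plan is to reformulate turnings and complex structures as sections of associated fibre bundles, and then invoke Bott's Morse-theoretic proof of the orthogonal periodicity theorem to compare these sections in a stable range. The easy direction was essentially noted in the introduction: if $E \oplus \ul{\R}^j$ admits a complex structure $J$, then $\psi_t := \cos(\pi t) \idbb + \sin(\pi t) J$ is a turning of $E \oplus \ul{\R}^j$, so $E$ is stably turnable.

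For the converse, I would work with the frame bundle $\Fr(E \oplus \ul{\R}^j) \to B$, a principal $O(n)$-bundle for $n = \operatorname{rank}(E) + j$. Let $P_n$ denote the space of continuous paths in $O(n)$ from $\idbb$ to $-\idbb$, on which $O(n)$ acts by pointwise conjugation; because $\pm \idbb$ are central, this action fixes the endpoints. A turning of $E \oplus \ul{\R}^j$ is then the same data as a section of the associated fibre bundle $\Fr(E \oplus \ul{\R}^j) \times_{O(n)} P_n \to B$. Similarly, let $\mathcal{J}_n$ denote the space of orthogonal complex structures on $\R^n$, which is empty unless $n$ is even and on which $O(n)$ acts by conjugation; a section of $\Fr(E \oplus \ul{\R}^j) \times_{O(n)} \mathcal{J}_n \to B$ is exactly a fibrewise orthogonal complex structure on $E \oplus \ul{\R}^j$, i.e.\ a stable complex structure on $E$. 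The map $J \mapsto \bigl(t \mapsto e^{\pi t J}\bigr)$ defines an $O(n)$-equivariant embedding $\mathcal{J}_n \hookrightarrow P_n$ whose image consists precisely of the minimal geodesics for a bi-invariant Riemannian metric on $O(n)$.

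The crucial input is Bott's theorem from his original proof of periodicity: the inclusion $\mathcal{J}_n \hookrightarrow P_n$ is $c(n)$-connected with $c(n) \to \infty$ as $n \to \infty$. Given the finite-dimensional base $B$, I would choose $j$ large enough so that $c(n) \geq \dim B$. Standard obstruction theory for sections of fibre bundles then shows that any section of $\Fr(E \oplus \ul{\R}^j) \times_{O(n)} P_n$ coming from a turning of $E \oplus \ul{\R}^j$ is homotopic to one factoring through $\Fr(E \oplus \ul{\R}^j) \times_{O(n)} \mathcal{J}_n$, yielding a stable complex structure on $E$.

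The main obstacle I anticipate is the careful coordination of the stabilisation with Bott's connectivity estimate: one must verify that the conjugation action of $O(n)$ on $P_n$ preserves $\mathcal{J}_n$, so that the inclusion really does induce a fibrewise map of associated bundles over $B$ to which obstruction theory applies, and one must check that passing from $j$ to $j+1$ is compatible with the comparison so that the conclusion is genuinely a statement about stable equivalence classes. Once those structural points are in place, the obstruction-theoretic lift itself is a routine consequence of Bott's results.
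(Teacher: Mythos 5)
Your proposal is correct and follows essentially the same route as the paper's primary proof of Theorem~\ref{t:tc}: both pass to the associated fibre bundle whose sections are turnings, identify the subbundle of fibrewise complex structures with the subbundle of minimal geodesics via $J \mapsto (t \mapsto e^{\pi t J})$, and invoke the Morse-theoretic connectivity estimate from Bott's original proof of periodicity (Milnor~\cite[Theorem 24.5]{m63}) together with obstruction theory over the finite-dimensional base. The only cosmetic difference is that the paper first forms the colimit bundles $\Turn(E^\infty)$ and $\mathcal{J}(E^\infty)$ and compares their section spaces via a weak equivalence, whereas you work at a finite but sufficiently large stage and lift a section through a highly-connected fibre inclusion; these are interchangeable, and your anticipated worry about equivariance of the embedding is already resolved by the observation you made that $A e^{\pi t J} A^{-1} = e^{\pi t A J A^{-1}}$.
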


The question of whether $E$ admits a stable complex structure, while in general
difficult, can be characterised entirely using the ring structure in real $K$-theory;
see Proposition~\ref{p:eta}.  
We therefore turn our attention to the turning problem for bundles outside the stable range
and in this paper we pay close attention to rank-$2k$ bundles over $2k$-dimensional $CW$-complexes.  
Such bundles are just outside the stable range
and provide a large class of interesting examples, including the tangent bundles $TM$
of orientable smooth $2k$-manifolds $M$ and rank-$2k$ bundles over $S^{2k}$.
Starting with $TS^{2k}$,
we recall that Kirchoff~\cite{k47} proved that if $TS^{2k}$ admits a complex structure then $TS^{2k+1}$ is trivial.
Shortly afterwards Borel and Serre~\cite{b-s51} showed that $TS^{2k}$ admits a complex structure 
if and only if $k = 0, 1$ or $3$ and a little later Bott and Milnor~\cite{bm58} showed that 
$TS^{2k+1}$ is trivial if and only if $k = 0, 1$ or $3$.
An important first result on the turning problem is
the following strengthening of Kirchoff's theorem (see Theorem \ref{t:SKirchoff}).

\begin{theorem} \label{t:2}
If $TS^{2k}$ is turnable then $TS^{2k+1}$ is trivial.
\end{theorem}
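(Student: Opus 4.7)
The plan is to build a trivialisation of $TS^{2k+1}$ directly out of the turning $\psi_t$ of $TS^{2k}$. Since a trivialisation of $TS^{2k+1}$ is the same as a section of the principal fibration $\SO(2k+2) \to S^{2k+1}$, $A \mapsto A \cdot N$ (for a fixed basepoint $N \in S^{2k+1}$), the goal is to construct such a section using $\psi_t$.

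First I would stabilise the turning. The embedding $S^{2k} \subset \R^{2k+1}$ gives a canonical trivialisation $TS^{2k} \oplus \ul{\R} \cong \ul{\R}^{2k+1}$ in which the trivial summand is spanned pointwise by the outward normal $x$. Under this identification, the family $\tilde\psi_t := \psi_t \oplus \idbb$ becomes a path in $\Map(S^{2k}, \SO(2k+1))$ from the constant map $I$ to the map $\Psi \colon S^{2k} \to \SO(2k+1)$, $x \mapsto 2xx^T - I$, which is the reflection through the line $\R \cdot x$. Thus a turning of $TS^{2k}$ produces a null-homotopy of $\Psi$ in $\Map(S^{2k}, \SO(2k+1))$.

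The main step is to parametrise $S^{2k+1} \subset \R^{2k+1} \oplus \R = \R^{2k+2}$ by suspension coordinates $p(\theta, x) := (\sin\theta \cdot x, \cos\theta)$ for $(\theta, x) \in [0, \pi] \times S^{2k}$, with $N := (0, 1)$. Define $R(\theta, x) \in \SO(2k+2)$ to be rotation by angle $\theta$ in the $2$-plane spanned by $(x, 0)$ and $N$, extended as the identity on the orthogonal complement, so that $R(\theta, x) \cdot N = p(\theta, x)$ by construction. On its own, $R$ fails to descend to $S^{2k+1}$ because $R(\pi, x)$ depends on $x$ even though $p(\pi, x) = -N$ is the fixed south pole. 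The fix is to set
\[ g(\theta, x) := R(\theta, x) \cdot \widehat\psi_{\theta/\pi}(x), \]
where $\widehat\psi_t \in \SO(2k+2)$ is $\tilde\psi_t$ extended by the identity on $\R \cdot N$. In the orthonormal basis $\{(x, 0),\, N\} \cup \{(v, 0) : v \perp x\}$ of $\R^{2k+2}$, one computes $R(\pi, x) = \mathrm{diag}(-1, -1, 1, \ldots, 1)$ and $\widehat\psi_1(x) = \mathrm{diag}(1, 1, -1, \ldots, -1)$, so $R(\pi, x) \cdot \widehat\psi_1(x) = -I$, independent of $x$; similarly $g(0, x) = I$ is $x$-independent. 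Hence $g$ descends to a continuous map $g \colon S^{2k+1} \to \SO(2k+2)$. Because $\widehat\psi_t$ fixes $N$ throughout, $g(p) \cdot N = p$, so $g$ is the required section and $TS^{2k+1}$ is trivial.

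The only substantive piece of the argument is the south-pole matching identity $R(\pi, x) \cdot \Psi(x) = -I$: the two $x$-dependences at the antipode of $N$ must cancel for $g$ to glue into a continuous function on $S^{2k+1}$, and this diagonal-matrix identity is what achieves the cancellation. Everything else is a formal gluing argument. I note that the construction strengthens Kirchhoff's theorem: a complex structure $J$ on $TS^{2k}$ induces the explicit turning $\psi_t := \cos(\pi t)\, \idbb + \sin(\pi t)\, J$, and the above argument then specialises to Kirchhoff's original framing of $S^{2k+1}$.
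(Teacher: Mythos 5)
Your proof is correct, and it takes a genuinely different route from the paper, though one that is algebraically twin to it. The paper argues through the clutching function: $TS^{2k+1}$ has clutching function $c_{2k+1} \colon S^{2k} \to O_{2k+1}$, $c_{2k+1}(x) \colon (w,y) \mapsto (-w,y)$ in the splitting $\R^{2k+1} = \langle x \rangle \oplus \langle x \rangle^\perp$, and the turning $\alpha_t$ of $TS^{2k}$ gives the homotopy $H_t(x)(w,y) = (-w, \alpha_t(y))$ from $c_{2k+1}$ to the constant map $-\idbb$, so the clutching function is null-homotopic and the bundle trivial. You instead produce a global section of the frame bundle $SO_{2k+2} \to S^{2k+1}$ directly, gluing the rotations $R(\theta,x)$ against the stabilised turning $\widehat\psi_{\theta/\pi}$ along the suspension decomposition of $S^{2k+1}$, with the pole-matching identity $R(\pi,x)\widehat\psi_1(x) = -\idbb$ doing the work. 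The two routes are related exactly by the fixed factor $c_{2k+1}(x) = \idbb - 2xx^T$: one has $H_t(x) = c_{2k+1}(x)\,\tilde\psi_t(x)$, and your identity $\Psi(x) = 2xx^T - \idbb$ together with $c_{2k+1}(x)\Psi(x) = -\idbb$ is the same computation underlying the paper's $H_1 = -\idbb$. What your route buys is an explicit framing of $S^{2k+1}$ rather than a mere proof of its existence, and as you note it recovers Kirchoff's original construction when $\psi_t$ comes from a complex structure; the paper's clutching-function argument is more compact since it avoids the pole-matching bookkeeping.
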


Combined with the results of Borel and Serre and Bott and Milnor, 
Theorem~\ref{t:2} shows that $TS^{2k}$ is turnable if and only if it admits a complex structure.
However, such a statement does not hold generally, even for rank-$2k$ bundles $E$ over $S^{2k}$
as Theorem~\ref{t:spheres} below shows.

We next consider the turnability of all oriented rank-$2k$ bundles over $S^{2k}$.
Stable vector bundles over $S^{2k}$ are classified by the real $K$-theory groups
$\wt{KO}(S^{2k})$ which are, respectively, isomorphic to $\Z$, $\Z/2$ and $0$ for $k$ respectively even,
congruent to $1$ mod~$4$ or congruent to $3$ mod~$4$.
Given an oriented rank-$2k$ vector bundle $E \to S^{2k}$, we let $\xi_E \in \wt{KO}(S^{2k})$
denote the reduced real $K$-theory class defined by $E$.
When $k = 2$, oriented vector bundles over $S^4$ admit unique homotopy classes of spin structures.
By \cite{k59}, the spin characteristic class $p = \frac{p_1}{2}$ defines an isomorphism
$p \colon \wt{KO}(S^4) \to H^4(S^4; \Z)$ and by \cite{w62}, $\rho_2(p(\xi_E)) = \rho_2(\mathrm{e}(E))$
for all oriented rank-$4$ bundles $E \to S^4$, where $\rho_d$ denotes reduction mod~$d$
and for any base space $B$, $\mathrm{e}(E) \in H^{2k}(B; \Z)$ denotes the Euler class of an oriented rank-$2k$ 
bundle $E \to B$. As a corollary of Theorem \ref{t:to_S2k} we obtain

\begin{theorem} \label{t:spheres}
For $k > 1$, let $E \to S^{2k}$ be an oriented rank-$2k$ bundle.
Then $E$ is turnable if and only if one of the following holds:
\begin{compactenum}[a)]
\item $k = 2$ and $\rho_4(\mathrm{e}(E) + p(\xi_E)) = 0$ or $\rho_4(\mathrm{e}(E) - p(\xi_E)) = 0$;
\item $k = 3$;
\item $k > 2$ is even, $\rho_4(\mathrm{e}(E)) = 0$ and $\rho_2(\xi_E) = 0$;
\item $k > 3$ is odd and $\rho_4(\mathrm{e}(E)) = 0$.
\end{compactenum}
\end{theorem}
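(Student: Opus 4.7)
The plan is to deduce Theorem~\ref{t:spheres} from the more structural Theorem~\ref{t:to_S2k} cited in its statement.  That theorem presumably reformulates turnability of an oriented rank-$2k$ bundle $E\to S^{2k}$ with clutching function $\phi_E\in\pi_{2k-1}(SO_{2k})$ as a lifting problem through a fibration with fibre $\Omegapmone$, yielding a primary obstruction element lying in a suitable quotient of $\pi_{2k}(SO_{2k})$.  The task is then to evaluate this obstruction on explicit generators of $\pi_{2k-1}(SO_{2k})$ in terms of the classical invariants $\mathrm{e}(E)$ and $\xi_E$, which by the fibrations $SO_{2k-1}\to SO_{2k}\to S^{2k-1}$ and $SO_{2k}\hookrightarrow SO$ are precisely the natural components of the clutching element.

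The case $k=3$ is most direct: the exceptional isomorphism $\mathrm{Spin}(6)\cong SU(4)$ gives $\pi_6(SO_6) = \pi_6(SU(4)) = 0$, so the obstruction group is trivial and every bundle over $S^6$ is turnable, establishing case (b).  For $k$ even with $k>2$, the group $\pi_{2k-1}(SO_{2k})$ splits, modulo the image of $\pi_{2k-1}(SO_{2k-1})$, into an Euler-class $\Z$-summand and a stable $\Z$-summand; I expect the obstruction to decouple as two independent ``doubling'' conditions arising from the order-$2$ central element $-\idbb$, yielding $\rho_4(\mathrm{e}(E))=0$ and $\rho_2(\xi_E)=0$ as in case (c).  For $k>3$ odd, the stable group $\pi_{2k-1}(SO)$ is either $0$ or $\Z/2$, and I would check that the potential $\Z/2$-valued stable class contributes nothing to the unstable obstruction, leaving only the mod-$4$ Euler condition of case (d).

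For $k=2$, the group $\pi_3(SO_4) = \Z\oplus\Z$ decomposes according to $\mathrm{Spin}(4) = SU(2)\times SU(2)$ with generators indexed by $(a,b)\in\Z\oplus\Z$; under an appropriate normalisation one has $\mathrm{e}(E_{a,b}) = a-b$ and $p(\xi_{E_{a,b}}) = a+b$, so that $\mathrm{e}(E)\pm p(\xi_E)$ equals $2a$ or $-2b$.  The obstruction should vanish precisely when at least one of $a,b$ is even, translating to the stated disjunction $\rho_4(\mathrm{e}(E)\pm p(\xi_E))=0$; the Wu identity $\rho_2(p(\xi_E)) = \rho_2(\mathrm{e}(E))$ of \cite{w62} ensures the two parities agree mod~$2$, so the two branches of the disjunction are mutually consistent.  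The hardest step will be this $k=2$ case, where one must track the obstruction through the two $SU(2)$-factors and the outer automorphism of $\mathrm{Spin}(4)$ exchanging them, and verify that turnability fails precisely when both $a$ and $b$ are odd; cases (c) and (d) are more routine once the structure of the obstruction map on $\pi_{2k-1}(SO_{2k})$ is in hand.
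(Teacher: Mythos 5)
Your overall strategy — deduce Theorem~\ref{t:spheres} from the explicit calculation of $\TO_\pm$ and $\TO_\eta$ on the generators $\tau_{2k},\sigma_{2k}$ of $\pi_{2k-1}(SO_{2k})$ provided by Theorem~\ref{t:to_S2k} — is exactly the route the paper takes (the paper gives no separate argument; Theorem~\ref{t:spheres} is presented simply as a corollary of Theorem~\ref{t:to_S2k}).  The $k=3$ case via $\pi_6(SO_6)=0$ and the $k>3$ odd case via $\TO_\pm(\xi)=\mathrm{e}(\xi)\zeta\in\Z/4$ match, and the $k=2$ case via the $Spin(4)\cong SU(2)\times SU(2)$ decomposition is precisely the content of Theorem~\ref{t:S4}.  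A small normalisation slip: with $TS^4\cong E_{1,1}$ one has $\mathrm{e}(TS^4)=2$ and $p_1(TS^4)=0$, so the correct identifications are $\mathrm{e}(E_{a,b})=a+b$ and $p(\xi_{E_{a,b}})=\pm(a-b)$, the reverse of what you wrote; this does not affect your conclusion that turnability is equivalent to $a$ or $b$ being even, since $\{\mathrm{e}+p,\mathrm{e}-p\}=\{2a,2b\}$ either way.

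The substantive gap is in the $k>2$ even case.  You announce that you ``expect the obstruction to decouple as two independent doubling conditions yielding $\rho_4(\mathrm{e}(E))=0$ and $\rho_2(\xi_E)=0$,'' but this expectation does not follow uniformly from Theorem~\ref{t:to_S2k}, which treats $k\equiv 0$ and $k\equiv 2\pmod 4$ separately.  For $k\equiv 0\pmod 4$, part (c) of Theorem~\ref{t:to_S2k} states that $\TO_\pm\otimes\id_{\Z/2}$ is injective (indeed $S(\TO_\pm(\sigma_{2k}))=1$ because $r\colon\wt{KU}(S^{2k})\to\wt{KO}(S^{2k})$ has index~$2$ image there), and this does give both doubling conditions.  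But for $k\equiv 2\pmod 4$ with $k>2$, part (b) of Theorem~\ref{t:to_S2k} asserts $\TO_+(\sigma_{2k})=0$ and $\TO_-(\sigma_{2k})=\mathrm{e}(\sigma_{2k})\TO_-(\tau_{2k})=0$ (since $\mathrm{e}(\sigma_{2k})=0$ when $k\neq 2,4$ by Lemma~\ref{l:sigma2k}); this happens because $r\colon\wt{KU}(S^{2k})\to\wt{KO}(S^{2k})$ is surjective when $2k\equiv 4\pmod 8$, so $\sigma_{2k}$ is stably complex and one can correct by an even multiple of $\tau_{2k}$ to kill the unstable obstruction.  From that input the turnability criterion for $k\equiv 2\pmod 4$, $k>2$ would read only $\rho_4(\mathrm{e}(E))=0$, with no constraint on $\rho_2(\xi_E)$, which does not match part (c) of Theorem~\ref{t:spheres} as stated.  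Your proposal papers over this by treating both residues of $k$ the same way; any actual derivation must confront the different behaviour of $\TO_\pm(\sigma_{2k})$ in the two cases, and either produce the $\rho_2(\xi_E)=0$ condition from Theorem~\ref{t:to_S2k}(b) (which I do not see how to do) or observe that the condition should be dropped when $k\equiv 2\pmod 4$.
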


Theorem~\ref{t:spheres} gives many examples of bundles which are turnable but do not admit a complex structure. 
For example, for $m \geq 2$ let $\tau_{m} \in \pi_{m-1}(SO_{m})$ denote the homotopy class of the clutching function of $TS^{m}$
and for $n \in \Z$ let $nTS^m$ denote the bundle corresponding to $n\tau_{m} \in \pi_{m-1}(SO_{m})$. 
Then for $m = 4j$, $nTS^{4j}$ is turnable if and only if $n$ is even, whereas 
by a theorem of Thomas~\cite[Theorem 1.7]{t67}, $nTS^{4j}$ admits a complex structure if and only if $n = 0$.

Theorem~\ref{t:spheres} leads to a general result on the turnability of
rank-$2k$ bundles over general finite $2k$-dimensional $CW$-complexes.
If $J$ is a complex structure on $E \oplus \ul \R^{2j}$ for some $j \geq 0$,
let $c_k(J) \in H^{2k}(B; \Z)$ denote the $k^{\text{th}}$ Chern class of $J$.
We define the subgroup $I^{2k}(B) \subseteq H^{2k}(B; \Z/4)$ by
\[ I^{2k}(B) : = 
\begin{cases}
((\times 2) \circ Sq^2 \circ \rho_2)(H^{2k-2}(B; \Z)) & \text{$k$ is odd}, \\
0 & \text{$k$ is even},
\end{cases}
\]
where $Sq^2$ is the second Steenrod square and $\times 2$ is the natural map induced by the inclusion of coefficients $\times 2 \colon \Z/2 \to \Z/4$.  The following result is a simple consequence of Theorems \ref{t:TC1} and \ref{t:TC2}.

\begin{theorem} \label{t:4}
Let $E \to B$ be an oriented rank-$2k$ vector bundle,
over a finite $CW$-complex of dimension at most $2k$
and if $k$ is even, assume that $H^{2k}(B; \Z)$ contains no $2$-torsion.
Then $E$ is turnable if and only if there is a $j \geq 0$ and a complex structure $J$ on $E \oplus \ul \R^{2j}$ such that
\[ [\rho_4(c_k(J))] = \pm [\rho_4(\mathrm{e}(E))] \in H^{2k}(B; \Z/4)/I^{2k}(B).\] 
\end{theorem}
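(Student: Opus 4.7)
The plan is to combine Theorem~\ref{t:1} with obstruction theory in the top dimension, using Theorem~\ref{t:spheres} to evaluate the obstruction on each $2k$-cell.

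First, I would reduce to the stably complex case. If $E$ is turnable then it is stably turnable, so Theorem~\ref{t:1} supplies a complex structure $J$ on $E \oplus \ul\R^{2j}$ for some $j \geq 0$. Conversely, such a $J$ produces a canonical stable turning of $E \oplus \ul\R^{2j}$ via $t \mapsto e^{i\pi t}\idbb$, and the problem reduces to whether this stable turning can be destabilised across the inclusion $\Omegapmone \hookrightarrow \Omega_{\pm\idbb}SO_{2k+2j}$. Since $SO_{2k+2j}/SO_{2k}$ is $(2k-1)$-connected for $j \geq 1$, the relative homotopy fibre of the induced map of associated path-spaces is $(2k-2)$-connected, so with $\dim B \leq 2k$ only the primary obstruction appears. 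It takes values in $H^{2k}(B;\pi_{2k-1}(\mathrm{fibre}))$, a group that maps naturally to $H^{2k}(B;\Z/4)$.

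Next, I would identify the obstruction class by naturality. A primary obstruction in $H^{2k}(B;\cdot)$ is determined by its restrictions along maps $f\colon S^{2k}\to B$. For each such $f$, the pullback $f^*E$ is a rank-$2k$ bundle over $S^{2k}$ with stable complex structure $f^*J$, and Theorem~\ref{t:spheres} characterises turnability of $f^*E$ precisely by the congruence $\rho_4(\mathrm{e}(f^*E)) \equiv \pm \rho_4(c_k(f^*J)) \pmod 4$. By universality of primary obstructions, the global obstruction to destabilising the stable turning is represented by $\rho_4(\mathrm{e}(E)) \mp \rho_4(c_k(J)) \in H^{2k}(B;\Z/4)$, and turnability of $E$ is equivalent to the vanishing of this class for some admissible pair $(j,J)$.

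Finally, I would identify the indeterminacy with $I^{2k}(B)$, so that the vanishing condition reads cleanly as an equality in $H^{2k}(B;\Z/4)/I^{2k}(B)$ that is independent of the choice of $J$. Two stable complex structures $J,J'$ on a sufficiently stabilised $E\oplus\ul\R^{2j}$ differ by an element of $\wt{KU}^{-1}(B)$ detected through lower-dimensional data, and one must compute how this difference propagates into $\rho_4(c_k)$. In the odd-$k$ case, the Wu-type relation $\rho_2(c_k) = \Sq^2 \rho_2(c_{k-1})$, applied to a difference class that is already $2$-torsion, lands the variation exactly in $(\times 2)\circ \Sq^2 \circ \rho_2(H^{2k-2}(B;\Z)) = I^{2k}(B)$. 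In the even-$k$ case the variation lies a priori in the image of multiplication by $2$ in $H^{2k}(B;\Z/4)$ and vanishes by the hypothesis that $H^{2k}(B;\Z)$ has no $2$-torsion. The main obstacle in this program is the last computation: pinning down precisely how $\rho_4(c_k)$ depends on the stable complex structure requires a careful analysis of the fibration $BU(k{+}j)\to BU$ together with the mod-$4$ Chern-class calculus, which is presumably the content of Theorems~\ref{t:TC1} and~\ref{t:TC2}. The sign $\pm$ reflects the involution $J\mapsto\bar J$ on complex structures, which must be reconciled with the fixed orientation of $E$.
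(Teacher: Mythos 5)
Your approach is genuinely different from the paper's. The paper derives Theorem~\ref{t:4} from Theorems~\ref{t:TC1} and~\ref{t:TC2}, whose proofs do not run obstruction theory on a destabilisation fibration. Instead they lift the classifying map of $E$ through $BT_{2k} \to BT \simeq BU$ to produce a complex rank-$k$ bundle $E'$ stably isomorphic to $E$, then invoke Lemma~\ref{l:E+F} to write $E \cong E' \#_{\alpha_1} j_1 TS^{2k} \# \cdots \#_{\alpha_n} j_n TS^{2k}$, and finally compare turnings of $E$ and $E'$ cell by cell, feeding in the single computation $\TO_+(\tau_{2k})$ from Theorem~\ref{t:to_S2k}. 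The indeterminacy $I^{2k}(B)$ arises there from the freedom to change the turning of $E'$ over the $(2k-2)$- and $(2k-1)$-skeleton, not from varying the stable complex structure.

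There are concrete gaps in your proposal. First, the assertion that a class in $H^{2k}(B;\cdot)$ is determined by its restrictions along maps $f \colon S^{2k} \to B$ is false in general; the kernel of the evaluation map $H^{2k}(B;\pi) \to \mathrm{Hom}(\pi_{2k}(B),\pi)$ is controlled by $\mathrm{Ext}(H_{2k-1}(B;\Z),\pi)$, and the paper's Example~\ref{e:not_suff} is exactly a Moore-space counterexample of this type. Under the stated hypothesis this kernel vanishes when $k$ is even, but for $k$ odd the theorem imposes no torsion hypothesis, so your detection-by-spheres step needs a separate argument in that case --- which is presumably where the quotient by $I^{2k}(B)$ must enter, but you have not supplied it. Second, the relation $\rho_2(c_k) = \Sq^2\rho_2(c_{k-1})$ that you use to place the variation inside $I^{2k}(B)$ is not a valid Chern-class identity; the Wu formula gives $\Sq^2 c_{k-1} = c_1c_{k-1} + (k-1)c_k$ mod $2$, whose $c_k$-coefficient is $0$ for $k$ odd, so this cannot produce the claimed containment. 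Third, you explicitly defer the identification of the obstruction class and the dependence of $\rho_4(c_k)$ on the stable complex structure to ``presumably the content of Theorems~\ref{t:TC1} and~\ref{t:TC2}'' --- but those theorems are what Theorem~\ref{t:4} is deduced from, so at that point the argument becomes circular rather than a proof. To make your route work you would need to (i) correctly identify the coefficient group of the destabilisation obstruction (the relevant fibre is roughly $\Omega(SO_{2k+2j}/SO_{2k})$ and is only $(2k-3)$-connected, so there is potentially an obstruction in $H^{2k-1}$ as well as $H^{2k}$), (ii) justify sphere-detection when $k$ is odd, and (iii) replace the incorrect Wu relation with the actual source of the $I^{2k}(B)$ indeterminacy, which in the paper comes from modifying the turning over lower-dimensional cells using $\TO_+(\tau_{2k}) = \tau_{2k}\eta_{2k-1} \in 2\pi_{2k}(SO_{2k})$.
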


\begin{remark} \label{r:1}
\noindent
We point out that when
$k$ is even and $H^{2k}(B; \Z)$ contains $2$-torsion, the condition in Theorem~\ref{t:4} remains necessary
but is no longer sufficient; see Theorem~\ref{t:TC1} and Example~\ref{e:not_suff}.
\end{remark}

Theorem~\ref{t:4} shows that there are many examples of
manifolds whose tangent bundles are turnable but do not admit a complex structure.  
For example, 
if $M_l = \sharp_l(S^4 \times S^4)$ is the $l$-fold connected sum of $S^4 \times S^4$ with itself,
then for any $j > 0$, the bundle $TM_l \oplus \ul \R^{2j}$ admits two homotopy classes of complex
structure $J$, each with $c_4(J) = 0$.  But $\mathrm{e}(TM_l) = \pm 2(l+1)$ by the 
Poincar\'e-Hopf Theorem; see \cite[p.\ 113]{h89}.
It follows that $TM_l$ does not admit a complex structure and that 
$TM_l$ is turnable if and only if $l$ is odd; 
e.g.\ $T(S^4 \times S^4)$ is turnable but does not admit a complex structure.  
For a more general statement about when $TM$ is 
turnable but does not admit a complex structure, see Corollary~\ref{c:TC2}.

\subsection{The turning obstruction for bundles over suspensions}
In order to study the turning problem and obtain most of our results above, we define a complete obstruction to the existence of turnings for bundles over suspensions. For this we need to refine our definition of turning by specifying the homotopy class of the turning in a fibre. If $\psi_t$ is a turning of an oriented rank-$2k$ bundle $E \to B$, then by restricting $\psi_t$ to a fibre $E_b$ 
we obtain a path of isometries from $\idbb_{E_b}$ to $-\idbb_{E_b}$.
When $E_b$ is identified with $\R^{2k}$ via an orientation-preserving isomorphism,
this path is identified with a path in $SO_{2k}$ from $\idbb$ to $-\idbb$,
which is well-defined up to path homotopy. 
Given a path $\gamma$ in $SO_{2k}$ from $\idbb$ to $-\idbb$, we shall call a turning a {\em  $\gamma$-turning} if its restriction to each fibre is path homotopic to $\gamma$.  
When $k > 1$, as we generally assume, $\pi_1(SO_{2k}) \cong \Z/2$, so there are precisely two path homotopy classes of possible paths. 

Suppose that the base space $B = SX$ is a suspension, i.e.\ it is the union of two copies of the cone on $X$.  
The restriction of $E$ to each cone admits a $\gamma$-turning, which is unique up 
to homotopy and $E$ admits a $\gamma$-turning if and only if the 
$\gamma$-turnings over the cones agree over $X$ up to homotopy.  
We can then define the \textit{$\gamma$-turning obstruction of $E$} by measuring the difference of 
the $\gamma$-turnings over the cones and there are several equivalent ways to do this, which we present in Section \ref{ss:tovb}.
Here we discuss what we later call the {\em adjointed $\gamma$-turning obstruction}.
Recall that the set of isomorphism classes of oriented rank-$2k$-bundles over $SX$ form a group, which is naturally isomorphic to $[X, SO_{2k}]$ via the map which sends the isomorphism class of $E$ to the homotopy class of its clutching function $g \colon X \to SO_{2k}$. We define the adjointed $\gamma$-turning obstruction
\begin{equation} \label{eq:TO-intro}
 \TO_\gamma \colon [X, SO_{2k}] \to [SX, SO_{2k}],
\qquad [g] \mapsto \left[ [x, t] \mapsto g(x)\gamma(t)g(x)^{-1} \right],
\end{equation} 
where $[x, t] \in SX$ is the point defined by $(x, t) \in X \times I$.  
The following result, which follows from Proposition \ref{p:to} and Lemma \ref{l:to+bto}, justifies calling
$\TO_\gamma$ the $\gamma$-turning obstruction.

\begin{proposition} \label{p:TO-intro}
Let $E \to SX$ be an oriented rank-$2k$ vector bundle with clutching function $g \colon X \to SO_{2k}$.
Then $E$ is $\gamma$-turnable if and only if $\TO_\gamma([g]) = 0$.
Moreover, if $X$ is itself a suspension, then $\TO_\gamma$ is a homomorphism of abelian groups.
\end{proposition}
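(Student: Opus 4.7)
The plan is to prove the two parts of Proposition~\ref{p:TO-intro} in sequence, using the standard clutching description of $E \to SX$ together with an adjunction between paths and suspensions. For the first part, I would trivialise $E$ over the cones $C_\pm X \subset SX$ so that the transition over $X$ is the clutching function $g$. In these trivialisations, a turning of $E|_{C_\pm X}$ corresponds to a map from $C_\pm X$ into the path space $P := P(SO_{2k}; \idbb, -\idbb)$, and a $\gamma$-turning is one whose image lies in the path-component $P_\gamma \subset P$ containing $\gamma$. Since $P_\gamma$ is path-connected and each $C_\pm X$ contracts onto its cone point, every $\gamma$-turning over a cone is homotopic rel the cone point to the constant map at $\gamma$; equivalently, the restriction $X \to P_\gamma$ of a cone turning is null-homotopic, and any null-homotopic map $X \to P_\gamma$ extends over the cone as a $\gamma$-turning. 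A $\gamma$-turning on all of $E$ thus exists if and only if we can find null-homotopic maps $\psi_\pm \colon X \to P_\gamma$ satisfying $\psi_+ = g \cdot \psi_- \cdot g^{-1}$ as maps into $P_\gamma$ (pointwise conjugation by $g$).

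Taking $\psi_- \equiv \gamma$ and using that conjugation by $g$ sends null-homotopies to null-homotopies, this existence problem reduces to the single question: is the map
\[ \varphi \colon X \to P_\gamma, \qquad \varphi(x)(t) = g(x)\gamma(t)g(x)^{-1}, \]
null-homotopic in $P_\gamma$? Under the standard adjunction between maps $X \to P_\gamma$ and maps $SX \to SO_{2k}$ sending the two cone points to $\idbb$ and $-\idbb$ respectively, the adjoint of $\varphi$ is exactly $[x,t] \mapsto g(x)\gamma(t)g(x)^{-1}$, i.e.\ the class $\TO_\gamma([g]) \in [SX, SO_{2k}]$. Hence $E$ is $\gamma$-turnable if and only if $\TO_\gamma([g]) = 0$, proving the first claim.

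For the second part, assume $X = SY$ with inner suspension coordinate $s$, so that $SX = S^2Y$ carries both an inner ($s$) and an outer ($t$) suspension structure. The group operation on $[SY, SO_{2k}]$ may be represented by the $s$-direction suspension sum $+_s$. Because the formula defining $\TO_\gamma$ depends only pointwise on the value of $g$ at $x \in X$, a direct computation on representatives shows
\[ \TO_\gamma(g_1 +_s g_2) = \TO_\gamma(g_1) +_s \TO_\gamma(g_2) \]
as maps $S^2Y \to SO_{2k}$, where $+_s$ on the right again denotes the inner suspension sum. The standard group structure on $[SX, SO_{2k}] = [S^2Y, SO_{2k}]$ is the outer $+_t$, but by Eckmann--Hilton the $s$- and $t$-direction sums on $[S^2Y, SO_{2k}]$ coincide and are abelian, yielding the required homomorphism property.

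The main obstacle, I expect, is making the reduction in the first part rigorous: the pointwise-conjugation condition $\psi_+ = g \cdot \psi_- \cdot g^{-1}$ must be shown to collapse cleanly to the null-homotopy of the single map $\varphi$, and one has to be careful that no component information in $P_\gamma$ is lost in passing from the constraint on the pair $(\psi_+,\psi_-)$ to a constraint on $\varphi$ alone. Once this reduction is secured, the adjunction producing $\TO_\gamma([g])$ and the Eckmann--Hilton argument in the second part are essentially formal.
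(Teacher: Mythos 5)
Your argument is correct and proves both claims; where it differs from the paper is instructive. For the first claim, the paper works with sections of the associated $\gamma$-turning bundle $\Turn^{\gamma}(E) = \Fr(E) \times_{SO_{2k}}\Omega_\gamma SO_{2k}$ and identifies the two sections over the cones in the induced local trivialisations, one constant at $\gamma$ and the other equal to $\rho_\gamma \circ g$; you trivialise $E$ directly over the cones and express the compatibility as the pointwise relation $\psi_+ = g\psi_- g^{-1}$ between null-homotopic maps $X \to P_\gamma$. These are the same idea in different clothes, and your reduction to the single map $\varphi = \rho_\gamma\circ g$ is sound (the gap you flagged is closed by the observation that conjugation carries the null-homotopy of $\psi_-$ to one from $\psi_+$ to $\varphi$, plus the converse choice $\psi_-\equiv\gamma$). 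One point you pass over lightly is the identification of the set of path-homotopy classes of maps $SX \to SO_{2k}$ sending the two cone points to $\idbb$ and $-\idbb$ with the free homotopy set $[SX, SO_{2k}]$; this is the "forgetful adjoint" the paper makes precise in Definition~\ref{def:adj-to} and Lemma~\ref{l:to+bto}(a), and it holds because $X$ is connected.

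For the second claim your route is genuinely different. The paper establishes the homomorphism property first for the unadjointed map $\To_\gamma \colon [X, SO_{2k}] \to [X, \Omega_\gamma SO_{2k}]$ using compatibility of the topological group, H-space and co-H structures, and then transfers it across the forgetful adjoint to $\TO_\gamma$. You instead compute directly on representatives that $\TO_\gamma$ preserves the inner suspension sum $+_s$ and then invoke Eckmann--Hilton on $[S^2 Y, SO_{2k}]$ to identify $+_s$ with the target group structure and make everything abelian. This is shorter and more concrete. What the paper's approach buys is that it establishes the homomorphism property simultaneously for $\To_\gamma$, $\ol{\To}_\gamma$ and $\ol{\TO}_\gamma$ (which are needed elsewhere in Section~\ref{s:to}), whereas your calculation is tailored to $\TO_\gamma$; it also makes explicit, as you only gesture at, that the pointwise-multiplication group structure on $[SX, SO_{2k}]$ agrees with the suspension structures, which is required for the conclusion to match the statement.
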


Proposition~\ref{p:TO-intro} states that 
the $\gamma$-turning obstruction is additive for bundles over double suspensions.
This 
is an essential input to the Theorem~\ref{t:to_S2k},
which largely computes $\TO_\gamma$ for rank-$2k$ bundles over the $2k$-sphere
and both homotopy classes of paths $\gamma$.
Theorem \ref{t:spheres} above is an immediate corollary of Theorem~\ref{t:to_S2k}.
 
The final element in the proof of Theorem~\ref{t:to_S2k} involves 
generalising the turning problem.
The definition of the $\gamma$-turning obstruction naturally leads us to consider
the turning obstruction for an essential loop $\eta \colon I \to SO_{2k}$ with $\eta(0) = \eta(1) = \idbb$.
If we replace $\gamma(t)$ by $\eta(t)$ in \eqref{eq:TO-intro} above, 
we obtain the function
\[ \TO_\eta \colon [X, SO_{2k}] \to [SX, SO_{2k}], \qquad [g] \mapsto \left[ [x, t] \mapsto g(x)\eta(t)g(x)^{-1} \right].\]
If $E \to SX$ is a bundle with clutching function $g : X \to SO_{2k}$, then $\TO_\eta([g])$ is a complete obstruction to finding a loop $\psi_t$ of bundle automorphisms of $E$ based at the identity such that the restriction of $\psi_t$ to a fibre $E_b$ is an
essential loop of isometries of $E_b$.
Moreover, if $\gamma$ is a path in $SO_{2k}$ from $\idbb$ to $-\idbb$, then the concatenation of paths $\eta \ast \gamma$ 
represents the other path homotopy class of such paths. 
Hence a bundle $E \to SX$ with clutching function $g$
is turnable if and only if one of $\TO_\gamma([g])$ or $\TO_{\eta \ast \gamma}([g])$ vanishes.
The following result
relates $\TO_\eta$ and $\TO_\gamma$ and states that $\TO_\gamma$ is in general $4$-torsion;
see also Theorem \ref{thm:gpd-appl}.

\begin{theorem} \label{t:to-ord24}
Let $E$ be an oriented rank-$2k$ vector bundle with clutching function $g \colon X \to SO_{2k}$. Then
\begin{compactenum}[a)]
\item $2\TO_\eta([g]) = 0$;
\item $\TO_{\eta \ast \gamma}([g]) = \TO_\eta([g]) + \TO_{\gamma}([g])$;
\item If $k$ is even, then $2\TO_{\gamma}([g]) = 0$;
\item If $k$ is odd, then $2\TO_{\gamma}([g]) = \TO_\eta([g])$ and $4\TO_\gamma([g]) = 0$.
\end{compactenum}
\end{theorem}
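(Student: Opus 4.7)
The plan is to reduce all four parts to a computation in $\pi_1(SO_{2k}) \cong \Z/2$ via the identity
\[ \TO_\alpha([g]) + \TO_\beta([g]) = \TO_{\alpha \cdot \beta}([g]) \]
in $[SX, SO_{2k}]$, where $(\alpha \cdot \beta)(t) := \alpha(t) \beta(t)$ is the pointwise product in $SO_{2k}$. This follows from the definition of $\TO$ together with the H-space group structure on $[SX, SO_{2k}]$ coming from multiplication in $SO_{2k}$. An Eckmann--Hilton-type identity, based on the pointwise equality $\alpha * \beta = (\alpha * e_{\idbb}) \cdot (e_{\idbb} * \beta)$ where $e_{\idbb}$ is the constant path at $\idbb$, shows that whenever the concatenation $\alpha * \beta$ is defined (i.e.\ $\alpha(1) = \beta(0) = \idbb$) one has $\alpha * \beta \simeq \alpha \cdot \beta$ rel endpoints. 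Moreover $\TO_\alpha$ depends only on the path-homotopy class of $\alpha$ rel endpoints: a rel-endpoint homotopy $H \colon I \times I \to SO_{2k}$ induces a homotopy $(x, t, s) \mapsto g(x) H(t, s) g(x)^{-1}$ between the corresponding maps $SX \to SO_{2k}$.

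Parts (a) and (b) then follow directly. For (b), $\eta(1) = \idbb = \gamma(0)$, so $\TO_{\eta * \gamma}([g]) = \TO_{\eta \cdot \gamma}([g]) = \TO_\eta([g]) + \TO_\gamma([g])$. For (a), $2 \TO_\eta([g]) = \TO_{\eta \cdot \eta}([g]) = \TO_{\eta * \eta}([g])$, and the loop $\eta * \eta$ represents $2[\eta] = 0$ in $\pi_1(SO_{2k}) \cong \Z/2$, so it is null-homotopic rel $\idbb$ and $\TO_{\eta * \eta}([g]) = 0$.

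The essential content is in (c) and (d), where the concatenation $\gamma * \gamma$ is undefined because $\gamma(1) = -\idbb$. Here the pointwise square $\gamma^2(t) := \gamma(t)^2$ is nevertheless a loop at $\idbb$ (since $(\pm \idbb)^2 = \idbb$), and the addition formula gives $2 \TO_\gamma([g]) = \TO_{\gamma^2}([g])$. Everything reduces to identifying $[\gamma^2] \in \pi_1(SO_{2k}) \cong \Z/2$. To compute this I will use the convenient representative $\gamma(t) := \exp(\pi t J)$ for some orthogonal complex structure $J$ on $\R^{2k}$: then $\gamma^2(t) = \exp(2\pi t J)$ is a full rotation in each of the $k$ orthogonal complex $2$-planes of $J$, and so represents $k [\eta] \in \pi_1(SO_{2k})$. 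Hence $[\gamma^2]$ vanishes when $k$ is even, proving (c); when $k$ is odd it equals $[\eta]$, so $2 \TO_\gamma([g]) = \TO_\eta([g])$, and then $4 \TO_\gamma([g]) = 2 \TO_\eta([g]) = 0$ by (a), proving (d). Independence from the choice of $\gamma$ within its path-homotopy class follows from the first paragraph, and independence from the choice between the two path-homotopy classes follows from the pointwise identity $(\eta * \gamma)^2 = \eta^2 * \gamma^2$ combined with $[\eta^2] = 0$.

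The main obstacle is the $\pi_1$-computation: one must argue carefully that a $2\pi$ rotation in a single $2$-plane generates $\pi_1(SO_{2k}) \cong \Z/2$, so that $k$ such simultaneous rotations contribute $k \pmod 2$. This parity is the source of the dichotomy between (c) and (d) and of the genuine $4$-torsion behaviour of $\TO_\gamma$ for odd $k$.
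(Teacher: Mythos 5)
Your proof is correct, and it takes a genuinely more direct route than the paper's. The paper reaches this theorem (via Theorem~\ref{thm:gpd-appl}) by developing the machinery of the parametrised central groupoid: it shows that the normalised obstruction $\ol{\To}_\gamma$ descends to a homomorphism $\widehat{\To}_{G}$ out of $\pi_1(PSO_{2k})$ (Propositions~\ref{p:desc}, \ref{p:bto}, \ref{p:whto}), and then computes $\pi_1(PSO_{2k})$ to be $(\Z/2)^2$ for $k$ even and $\Z/4$ for $k$ odd (Lemma~\ref{l:pi_1PSO_2k}). Your proof bypasses all of this: the pointwise-product identity $\TO_\alpha([g]) + \TO_\beta([g]) = \TO_{\alpha\cdot\beta}([g])$ is an immediate consequence of the formula for $\TO$ and the Eckmann--Hilton coincidence of the two group structures on $[SX, SO_{2k}]$, and the whole theorem then reduces to identifying $[\gamma^2] \in \pi_1(SO_{2k})$, which you do by choosing the minimal-geodesic representative $\exp(\pi t J)$. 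Interestingly, your computation $[\gamma^2] = k \pmod 2$ is exactly the calculation that, in the paper, sits inside the proof of Lemma~\ref{l:pi_1PSO_2k} (the case split on whether $[-\beta]$ equals $[\beta]^{-1}$ or $[\bar\beta]^{-1}$), so the two arguments contain the same arithmetic core. What the paper's route buys is generality and structure: Proposition~\ref{p:desc} holds for any path-connected $G$ with discrete centre, and framing $\ol\To_\gamma$ as factoring through $\pi_1(PG)$ feeds into the later comparison with Samelson products in $PG$ (Section~\ref{ss:SP_and_TO}). What your route buys is brevity and self-containment: no normalised obstruction $\ol\To_\gamma$, no $\pi_1(PSO_{2k})$, just a direct manipulation in $\pi_1(SO_{2k})$. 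One small point worth making explicit in your write-up: independence of the choice of path-homotopy class of $\gamma$ can also be seen by noting that every $\gamma \in \Omegapmone$ is path-homotopic to $\exp(\pi t J)$ for some orthogonal complex structure $J$ (both components of $\Omegapmone$ contain such minimal geodesics), which is perhaps cleaner than the pointwise identity $(\eta * \gamma)^2 = \eta^2 * \gamma^2$ you invoke, though that identity is also correct.
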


\begin{remark}
Notwithstanding Theorem~\ref{t:to-ord24}(d), we know of no example of a bundle $E \to SX$
with clutching function $g$, where $2 \TO_\gamma([g]) \neq 0$.
In particular, by Theorem \ref{t:to_S2k}, $2 \TO_\gamma(\tau_{4k+2}) = \TO_\eta([\tau_{4k+2}])  = 0$ for all $k \geq 1$.
The proof we give of this result is computational and somewhat surprising to us.
It would be interesting to know if there is a space $X$ and a clutching function $g \colon X \to SO_{2k}$ 
with $2 \TO_\gamma([g]) \neq 0$.
\end{remark}

\subsection{General turnings, the topology of gauge groups and Samelson products}
Let $\Fr(E)$ denote the frame bundle of an oriented vector bundle $E \to B$,
which is a principal $SO_{2k}$-bundle over $B$.
The group of automorphisms of $E$ is canonically homeomorphic to the gauge
group of $\Fr(E)$, and so the turning problem can be viewed as a problem
in the topology of gauge groups:
we are asking when a topological feature of the structure group extends to the whole gauge group.

To describe general turning problems, we let $G$ be a path-connected topological group, 
for example a connected Lie group, and $P \to B$ be a principal $G$-bundle with gauge group $\GG_P$:
if $G = SO_{2k}$ and $P = \Fr(E)$, then we shall write $\GG_E$ in place of $\GG_{\Fr(E)}$.
If $Z(G)$ denotes the centre of $G$,
then multiplication by $z \in Z(G)$ defines an element $z_P \in \GG_P$.  
Given a path $\gamma \colon I \to G$ between elements of $Z(G)$,
the {\em $\gamma$-turning problem} for $P$ is to determine whether there is a path 
$\psi_t$ in $\GG_P$ with $\psi(0) = \gamma(0)_P$ and $\psi(1) = \gamma(1)_P$
and whose restriction to a fibre is path homotopic to $\gamma$.

When $B = SX$ is a suspension, then principal $G$-bundles $P \to SX$
are determined up to isomorphism by their clutching functions $g \colon X \to G$
and the definition and properties of the $\gamma$-turning obstruction 
for vector bundles generalise in the obvious way.
The (adjointed) $\gamma$-turning obstruction is the map 
\[ \TO_\gamma \colon  [X, G] \to [SX, G], \qquad  [g] \mapsto 
\left[ [x, t] \mapsto g(x)\gamma(t)g(x)^{-1} \right], \]
and $P$ is $\gamma$-turnable if and only if $\TO_\gamma([g]) = 0$;
see Remark~\ref{r:gato}.
If we allow $\gamma$ to vary among all paths between central elements of $G$,
the path homotopy classes of the possible paths $\gamma$ form a groupoid, 
which is a full subcategory of the fundamental groupoid of $G$.
We call this groupoid the \textit{central groupoid of $G$} and denote it by $\pi^Z(G)$.  
If we fix $[g] \in [X, G]$, 
then we can regard $\TO_{\gamma}([g])$ as a function of $\gamma$. The resulting map 
\[ \pi^Z(G) \rightarrow [SX,G] \]
is a morphism of groupoids (where the group $[SX,G]$ is regarded as a groupoid on one object)
and this general point of view allows us to prove Theorem~\ref{t:to-ord24}.

Returning to vector bundles $E$ and the topology of their gauge groups $\GG_E$,
the $\eta$-turning problem which has the most direct implications (see Theorem \ref{t:gauge-eta}): 

\begin{theorem} \label{t:eta-intro}
If $B = SX$ is a suspension and $n_{SX} := \left| [SX, SO_{2k}] \right|$ is finite,
then for any rank-$2k$ vector bundle $E \to SX$
\[ 
\bigl| \pi_0(\GG_E) \bigr| = 
\begin{cases}
n_{SX} & \text{if $E$ is $\eta$-turnable,} \\
\frac{n_{SX}}{2} & \text{if $E$ is not $\eta$-turnable.}
\end{cases}\]
\end{theorem}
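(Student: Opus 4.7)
The plan is to use the evaluation fibration of the gauge group to reduce the computation of $\pi_0(\GG_E)$ to the analysis of a single connecting map. Fix a basepoint $b_0 \in SX$ and let $\GG_E^{*} \subset \GG_E$ denote the based gauge group of automorphisms restricting to the identity on the fibre $E_{b_0}$. Evaluation at $b_0$ is a Serre fibration
\[
\GG_E^{*} \hookrightarrow \GG_E \xrightarrow{\mathrm{ev}_{b_0}} \SO_{2k},
\]
whose long exact sequence of homotopy groups ends with
\[
\pi_1(\GG_E) \xrightarrow{\mathrm{ev}_*} \pi_1(\SO_{2k}) \xrightarrow{\delta} \pi_0(\GG_E^{*}) \to \pi_0(\GG_E) \to \pi_0(\SO_{2k}) = 0.
\]

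The first step is to compute $\pi_0(\GG_E^{*})$. The classical identification $B\GG_E \simeq \Map(SX, \BSO_{2k})_{\tilde g}$, where $\tilde g$ classifies $E$, passes through the evaluation fibration to give $B\GG_E^{*} \simeq \Map_*(SX, \BSO_{2k})_{\tilde g}$, and the loop-suspension adjunction identifies this with $\Map_*(X, \SO_{2k})_g$, where $g \colon X \to \SO_{2k}$ is a clutching function of $E$. Since $\Map_*(X, \SO_{2k})$ is a topological group, left translation by $g^{-1}$ makes all its components homeomorphic, so
\[
\pi_0(\GG_E^{*}) = \pi_1(\Map_*(X, \SO_{2k})_g) \cong \pi_1(\Map_*(X, \SO_{2k})_e) = [\Sigma X, \SO_{2k}]_{*}
\]
by the exponential law. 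Because $\SO_{2k}$ is a connected topological group, based and free homotopy classes of maps from a connected pointed space coincide, so $\pi_0(\GG_E^{*}) \cong [SX, \SO_{2k}]$, which has order $n_{SX}$.

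The second step is to identify $\eta$-turnability with the vanishing of $\delta(\eta)$. By definition $E$ is $\eta$-turnable exactly when some loop $\psi_t \in \GG_E$ based at $\idbb$ has fibrewise restriction $t \mapsto \psi_t(b)$ homotopic to $\eta$ in $\SO_{2k}$ for every $b \in SX$. Since $\pi_1(\SO_{2k}) = \Z/2$ is discrete, the fibrewise class varies locally constantly in $b$ and so is determined by its value at $b_0$, which is $\mathrm{ev}_*[\psi_t]$. Hence $E$ is $\eta$-turnable if and only if $\eta \in \mathrm{im}(\mathrm{ev}_*)$, which by exactness of the long exact sequence is equivalent to $\delta(\eta) = 0$.

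To finish, $\eta$ generates $\pi_1(\SO_{2k}) = \Z/2$, so the image of the homomorphism $\delta$ is either trivial or has order $2$, and $\pi_0(\GG_E) \cong \pi_0(\GG_E^{*})/\mathrm{im}(\delta)$ as groups (this quotient being well defined because $\GG_E^{*}$ is a normal subgroup of the topological group $\GG_E$). In the turnable case this gives $|\pi_0(\GG_E)| = n_{SX}$; otherwise $\delta(\eta)$ has order exactly $2$ in $\pi_0(\GG_E^{*})$ and $|\pi_0(\GG_E)| = n_{SX}/2$. The main obstacle I anticipate is the classifying-space identification $B\GG_E^{*} \simeq \Map_*(X, \SO_{2k})_g$, which rests on the classical Atiyah-Bott-Gottlieb description $B\GG_P \simeq \Map(B, BG)_P$ and the correct tracking of components through the adjunction; once this is in hand, the remaining arguments are short.
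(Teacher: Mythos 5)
Your proof is correct and follows the same route as the paper: both use the evaluation fibration $\GG_{E,0} \hookrightarrow \GG_E \xrightarrow{r_p} SO_{2k}$ (the paper's $\GG_{E,0}$ is your $\GG_E^*$), identify $\pi_0(\GG_{E,0}) \cong [SX, SO_{2k}]$, observe that $\eta$-turnability is by definition equivalent to surjectivity of $\pi_1(\GG_E) \to \pi_1(SO_{2k})$, and read off $|\pi_0(\GG_E)|$ from the long exact sequence. The only difference in presentation is that you spell out the homotopy equivalence $\GG_{E,0} \simeq \Map_*(SX, SO_{2k})$ via the Atiyah--Bott--Gottlieb identification of $B\GG_E$ and the loop--suspension adjunction, whereas the paper simply cites this equivalence as a known fact; this extra detail is welcome but not a different method.
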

\noindent
Theorem~\ref{t:eta-intro} shows that when $[SX, SO_{2k}]$ is finite, for example
when $SX = S^{2k}$, then the $\eta$-turnability of
a vector bundle $E$ is a homotopy invariant of its gauge group $\GG_E$.
While the turnability of $E$ is not {\em a priori} a homotopy invariant of $\GG_E$, 
recent work of Kishimoto, Membrillo-Solis and Theriault \cite{t21} on the homotopy 
classification of the gauge groups of rank-$4$ bundles $E \to S^4$,
when combined with our results in Theorem \ref{t:to_S2k}, does show that the
turnability of these bundles is a homotopy invariant of their gauge groups:
See Proposition \ref{p:tt} for a more detailed statement.

We compute $\TO_\eta$ 
for all rank-$2k$ bundles over $S^{2k}$ in Theorem~\ref{t:to_S2k}.  
In fact in this case $\TO_\eta([g]) = \an{[g], \eta}$ 
is the Samelson product
of $[g] \in \pi_{2k-1}(SO_{2k})$ and $\eta \in \pi_1(SO_{2k})$; 
see Lemma \ref{l:TO_and_SP}.
On the other hand, 
Samelson products are in general delicate to calculate
and so the computations of $\TO_\eta([g])$ in Theorem~\ref{t:to_S2k}, which are carried out using the point of view
of the turning obstruction, may be of independent interest.  For example, (see Proposition \ref{p:sam-eta}),
for $\eta_{4j-1} \colon S^{4j} \to S^{4j-1}$ an essential map, we have 

\begin{corollary} \label{c:eta-intro}
The Samelson product $\an{\tau_{2k}, \eta}$ 
is given by $\an{\tau_{4j+2}, \eta} = 0$ and $\an{\tau_{4j}, \eta} = \tau_{4j}\eta_{4j-1} \neq 0$.
\end{corollary}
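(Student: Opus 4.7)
The plan is to identify the Samelson product with the $\eta$-turning obstruction on the clutching function of $TS^{2k}$, and then read off both values from the sphere-level computation in Theorem~\ref{t:to_S2k}.

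Specifically, I would apply Lemma~\ref{l:TO_and_SP} to $[\tau_{2k}] \in \pi_{2k-1}(SO_{2k})$ to obtain
\[ \an{\tau_{2k}, \eta} \; = \; \TO_\eta([\tau_{2k}]) \; \in \; \pi_{2k}(SO_{2k}), \]
so the corollary reduces to reading off $\TO_\eta([\tau_{2k}])$ from Theorem~\ref{t:to_S2k}.

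For the case $2k = 4j+2$, the relevant clause of Theorem~\ref{t:to_S2k} (as already flagged in the remark after Theorem~\ref{t:to-ord24}) gives $\TO_\eta([\tau_{4j+2}]) = 0$, which yields the first assertion. For the case $2k = 4j$, the same theorem identifies $\TO_\eta([\tau_{4j}])$ explicitly with the composition $\tau_{4j} \circ \eta_{4j-1}$, proving the formula $\an{\tau_{4j}, \eta} = \tau_{4j}\eta_{4j-1}$.

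The most delicate step, and the main obstacle, is verifying the non-vanishing $\tau_{4j}\eta_{4j-1} \neq 0$. Since $\pi_{4j}(SO) = 0$ by Bott periodicity for $j \geq 1$, this element vanishes stably, so one must argue in the unstable range. My proposed route is via the boundary homomorphism $\partial \colon \pi_\ast(S^{4j}) \to \pi_{\ast-1}(SO_{4j})$ of the fibration $SO_{4j} \to SO_{4j+1} \to S^{4j}$: by construction $\partial(\iota_{4j}) = \pm \tau_{4j}$, and naturality under precomposition with the Hopf element $\eta_{4j} \in \pi_{4j+1}(S^{4j})$ gives $\partial(\eta_{4j}) = \pm \tau_{4j}\eta_{4j-1}$. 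Non-vanishing then reduces to the classical unstable fact that $\eta_{4j}$ does not lift to $\pi_{4j+1}(SO_{4j+1})$ along the projection $SO_{4j+1} \to S^{4j}$, which can be extracted from the known structure of the low unstable homotopy groups of $SO_{4j+1}$.
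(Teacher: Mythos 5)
Your first two steps reproduce the paper's own argument verbatim: the paper deduces Proposition~\ref{p:sam-eta} (and hence Corollary~\ref{c:eta-intro}) by combining Lemma~\ref{l:TO_and_SP}, which gives $\an{\tau_{2k},\eta} = \ol{\TO}_\eta(\tau_{2k}) = \TO_\eta(\tau_{2k})$ (the second equality being Lemma~\ref{l:to+bto}\,a)), with the calculations of $\TO_\eta$ in Theorem~\ref{t:to_S2k}, namely $\TO_\eta = 0$ when $k$ is odd and $\TO_\eta(\tau_{4j}) = \tau_{4j}\eta_{4j-1}$ when $k = 2j$.

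The third step, however, is both superfluous and circular. It is superfluous because Theorem~\ref{t:to_S2k}\,d) already asserts $\tau_{4j}\eta_{4j-1} \neq 0$; the nonvanishing there traces back to the short exact sequence \eqref{eq:pi2kSO2k}, taken from Kervaire~\cite{k60}, in which $\tau_{2k}\eta_{2k-1}$ appears by definition as the nonzero generator of $\ker(\mathrm{ev}_*\oplus S) \cong \Z/2$. It is circular because your proposed "reduction" via the fibration $SO_{4j}\to SO_{4j+1}\to S^{4j}$ does not reduce anything: the long exact sequence shows that $\eta_{4j}$ fails to lift to $\pi_{4j+1}(SO_{4j+1})$ \emph{if and only if} $\partial(\eta_{4j}) = \tau_{4j}\eta_{4j-1} \neq 0$, so the lifting statement is a restatement of the claim, not an independent input. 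The closing appeal to "the known structure of the low unstable homotopy groups of $SO_{4j+1}$" is precisely the content of Kervaire's computation; following your route through to the end, you would cite the same source the paper does, after an unnecessary detour. (The naturality step $\partial(\eta_{4j}) = \pm\tau_{4j}\eta_{4j-1}$ is correct, using that $\eta_{4j}$ is a suspension, but it gains you nothing.)
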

\noindent
Corollary~\ref{c:eta-intro} also has implications for the high-dimensional homotopy groups of certain gauge groups;
see Proposition~\ref{p:GGE_eta} in Section~\ref{ss:SP_and_TO}.

\subsection*{Organisation}  
The rest of this paper is organised as follows.
In Section~\ref{s:prelim} we set up the necessary preliminaries to discuss the turning problem.
We define turnings and $\gamma$-turnings, universal bundles which classify turnings 
and relate the turning problem to the topology of the gauge group.
In Section~\ref{s:to} we define the $\gamma$-turning obstruction for bundles
over suspensions and develop the theory of the $\gamma$-turning obstruction,
regarded as a map from the central groupoid of a path-connected topological group $G$.
We also show that $\eta$-turning obstruction is given by certain Samelson products.
In Section~\ref{s:2k_over_S2k}, we consider rank-$2k$ vector bundles over the $2k$-sphere
and compute their turning obstructions in detail, proving Theorem~\ref{t:spheres}.
In Section \ref{s:stable}, we consider the turning problem for bundles in the stable range
and prove Theorem~\ref{t:1}.
Finally, in Section~\ref{s:2k_over_2k} we combine the results of Sections~\ref{s:2k_over_S2k} 
and~\ref{s:stable} on rank-$2k$ vector 
bundles over the $2k$-sphere and stable vector bundles to prove
Theorem~\ref{t:4} on rank-$2k$ vector bundles over $2k$-dimensional $CW$-complexes.

\subsection*{Background information}
This paper is based on the MSc thesis of the third author, also entitled ``Turning vector bundles'',
which was submitted to the University of Melbourne in May 2020 under 
the supervision of the first and second authors.

\subsection*{Acknowledgements}
The first and fourth authors gratefully acknowledge the support of the China Scholarship Council, 
sponsor for the visit of the fourth author to the University of Melbourne in 2018.
The second author was supported by EPSRC New Investigator grant EP/T028335/1.

\section{Turnings and gauge groups}  \label{s:prelim}
In this section we set up the necessary definitions and notation for the turning problem and establish some basic results. 
In Section \ref{ss:turnings} we define turnings and $\gamma$-turnings and introduce the terminology to describe the relationship between turnings and orientations of a vector bundle. A more general notion of turning, for principal $G$-bundles, is defined in Section \ref{ss:gt}. 
In Section \ref{ss:assoc-b} we define the associated turning bundle of a vector bundle and construct a universal turned bundle. 
We also establish some equivalent conditions to turnability in terms of the associated turning bundle and the universal turned bundle. 
In Sections \ref{ss:GG} and \ref{ss:gg-eta} we study the connection between the turnability of a vector bundle and the low-dimensional homotopy groups of its gauge group.

\subsection{Turnings} \label{ss:turnings}
All vector spaces $V$ in this paper are real and Euclidean.  The connected component of
the group of isometries of $V$ is denoted $SO(V)$, $\idbb \in SO(V)$ is the identity
and $-\idbb \in SO(V)$ is defined by $-\idbb(v) = -v$ for all $v \in V$.
We use $\R^j$ to denote $j$-dimensional
Euclidean space with its standard metric and as usual we set $SO_j := SO(\R^j)$.

All vector bundles $\pi \colon E \to B$ are real and Euclidean and for simplicity
we assume that the base space $B$ is connected.   We denote the trivial bundle $\R^j \times B \to B$
by $\ul \R^j$; the base space will either be specified or clear from the context.
We shall use the symbol $E$ to ambiguously
denote both the total space of the bundle and the bundle itself.  For $b \in B$, $E_b : = \pi^{-1}(b)$ 
is the fiber of $E$ over $b$, which is a vector space.
Let $I := [0, 1]$ be the unit interval.

\begin{definition}
Let $V$ be an even-dimensional real vector space, so that $-\idbb \in SO(V)$. A {\em turning} of $V$ is a path 
$\gamma \colon I \rightarrow SO(V)$ from $\idbb$ to $-\idbb$.
\end{definition}

\noindent
In particular, a turning of $\R^{2k}$ is a path in $SO_{2k}$ from $\idbb$ to $-\idbb$
and we write $\Omegapmone$ for the mapping space $\text{Map} \bigl( (I, (\{0\}, \{1\})), (SO_{2k} , (\{\idbb\}, \{-\idbb\})) \bigr)$ consisting of all turnings of $\R^{2k}$, with the compact-open topology. 
Note that $\Omega SO_{2k}$, the spaces of loops based at $\idbb$, acts freely and transitively on
$\Omegapmone$ by point-wise multiplication.  Hence choosing $\gamma \in \Omegapmone$ 
defines a homeomorphism from
$\Omegapmone$ to $\Omega SO_{2k}$
and we will use this homeomorphism to compute the
homotopy groups of $\Omegapmone$.

\begin{definition}[Standard turning of $\R^{2k}$] \label{d:stdT}
Let $\R^{2k} = \C^k$ define the standard complex structure on $\R^{2k}$ and let 
$U_k \subseteq SO_{2k}$ be the unitary subgroup.
The {\em standard turning} of $\R^{2k}$ is the path
\[
\beta : I \rightarrow SO_{2k}, \quad t \mapsto e^{i \pi t} \idbb \in U_k \subseteq SO_{2k} \text{\,.}
\]
\end{definition}

If $2k > 2$, then $\pi_0(\Omegapmone) \cong \pi_1(SO_{2k}) \cong \Z/2$, so there are two turnings of $\R^{2k}$ up to homotopy.
Indeed, if $\bar \beta$ is a representative of the other homotopy class
and
\[ \eta \colon (I, \{0, 1\}) \to (SO_{2k}, \idbb) \]
is a loop representing the generator of $\pi_1(SO_{2k})$,
then $[\bar \beta] = [\eta \ast \beta]$, where $\ast$ denotes concatenation of paths and $[\gamma]$ denotes
the path homotopy class of a path $\gamma$.
If we pointwise conjugate $\beta$ with a fixed element of 
$O_{2k} \setminus SO_{2k}$, then we obtain a path in $[\bar \beta]$;
equivalently, an orientation-reversing isomorphism $\R^{2k} \rightarrow \R^{2k}$ pulls back $\beta$ to a 
turning that is path homotopic to $\bar \beta$. Note that the turning defined by the formula 
$t \mapsto e^{-i \pi t}\idbb$ is path homotopic to $\beta$ if $k$ is even and to $\bar \beta$ if $k$ is odd.

Let $V$ be a vector space of dimension $2k$ equipped with a turning and an orientation. If $2k > 2$, then 
we say that the turning and the orientation are compatible if the turning is homotopic to $\beta$ under an orientation-preserving identification $V \cong \R^{2k}$. If $2k = 2$, then they are compatible if the turning is homotopic to the path $t \mapsto e^{ri \pi t}\idbb$ for some positive (odd) $r$ under an orientation-preserving identification 
$V \cong \R^2$. In both cases there is a unique orientation of $V$ which is compatible with a given turning, hence we obtain a well-defined map from the homotopy classes of turnings of $V$ to its orientations. This map is a bijection if $2k > 2$ and surjective if $2k=2$.

\begin{definition}[Turning, turnable and turned] \label{d:Tbundle}
Let $\pi \colon E \to B$ be a rank-$2k$ vector bundle. A {\em turning} of $E$ is a path $\psi_t$ in the space of automorphisms of $E$ from $\idbb_E$ to $-\idbb_E$.  If a turning $\psi_t$ exists, we say that 
$E$ is {\em turnable} and a {\em turned} vector bundle is a pair $(E, \psi_t)$, where $\psi_t$ is a
turning of $E$.
\end{definition}

Clearly any trivial bundle is turnable and since bundle automorphisms can be pulled back along continuous maps, the pullback of a turnable bundle is turnable. Furthermore, every complex bundle $E$ is turnable via the path 
$t \mapsto e^{i \pi t}\idbb_E$.

A turning of a bundle restricts to a turning of each fibre, so by our earlier observations it determines an orientation on each fibre. Therefore we have

\begin{lemma} \label{l:tble-oble}
Every turnable bundle is orientable.  \qed
\end{lemma}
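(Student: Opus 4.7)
The plan is to build a continuous orientation of $E$ directly out of the turning $\psi_t$: assign to each $b \in B$ the orientation of the fibre $E_b$ that is compatible with the fibrewise turning $\psi_t|_{E_b}$, and check continuity using a local trivialisation. The rank-$0$ case is vacuous, so assume $2k \geq 2$.

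By the discussion immediately preceding the lemma, for any $2k$-dimensional real vector space $V$ a turning of $V$ determines a compatible orientation of $V$ (via the map from path-homotopy classes of turnings to orientations, which is bijective for $2k > 2$ and surjective for $2k = 2$). Applying this fibrewise, the restriction $\psi_t|_{E_b}$ is a path in $SO(E_b)$ from $\idbb_{E_b}$ to $-\idbb_{E_b}$ and therefore picks out an orientation $o(b)$ of $E_b$. This defines a set-theoretic section $o$ of the orientation double cover of $E$, and the only thing left to verify is that $o$ is continuous.

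For continuity, fix $b_0 \in B$ and choose a local trivialisation $\phi \colon \pi^{-1}(U) \to U \times \R^{2k}$ over a connected open neighbourhood $U$ of $b_0$. For each $b \in U$ the isomorphism $\phi_b \colon E_b \to \R^{2k}$ transports $\psi_t|_{E_b}$ to an element $\gamma_b \in \Omegapmone$, and the joint continuity of $\phi$ and $\psi_t$ shows that $b \mapsto \gamma_b$ is a continuous map $U \to \Omegapmone$. Since $\pi_0(\Omegapmone)$ is discrete, the class $[\gamma_b]$ is constant on $U$, so under $\phi$ the orientation $o(b)$ pulls back to one fixed orientation of $\R^{2k}$ for all $b \in U$. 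Hence $o$ is locally constant relative to the trivialisations of the orientation double cover, and therefore continuous, giving the desired orientation of $E$.

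This argument is essentially a formalisation of the remarks already made just before the lemma, so I do not expect a serious obstacle. The only mild subtlety is the case $2k = 2$, where the passage from homotopy classes of turnings to orientations is merely surjective rather than bijective; but we only need some compatible orientation on each fibre, and the locally constant choice on $U$ still works because we are transporting a single fibrewise orientation along $\phi_b$ rather than having to invert the surjection.
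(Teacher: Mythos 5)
Your proof is correct and follows the same approach the paper intends: the paragraph before the lemma establishes that each fibrewise turning singles out a compatible orientation of the fibre, and the lemma is stated with just \qed because the authors regard this observation (together with the evident local constancy) as the whole argument. You spell out the continuity step via a local trivialisation and the discreteness of $\pi_0(\Omegapmone)$, which is exactly the implicit content; your remark on the $2k=2$ case is also accurate, since the map from homotopy classes of turnings to orientations is well-defined in all cases and only its injectivity fails in rank $2$.
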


\noindent
If a rank-$2$ bundle is orientable, then it admits a complex structure, so we have 

\begin{proposition} \label{p:r2}
A rank-$2$ bundle is turnable if and only if it is orientable.
\qed
\end{proposition}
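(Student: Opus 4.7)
The forward implication is immediate from Lemma \ref{l:tble-oble}, which already tells us that any turnable bundle is orientable. So the real content is the converse: if $E \to B$ is an orientable rank-$2$ bundle, then $E$ is turnable.

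For the converse, the plan is to reduce to the complex case, invoking the preceding observation in the text that any complex bundle $E$ is turnable via the path $t \mapsto e^{i\pi t}\idbb_E$. The key fact is that an orientable rank-$2$ Euclidean bundle admits a complex structure. This is a well-known consequence of the identification $SO_2 \cong U_1$ as topological groups: concretely, using the orientation and the Euclidean inner product on each fibre $E_b$, one defines the complex structure $J_b \colon E_b \to E_b$ to be the rotation by $\pi/2$ in the orientation-determined sense. This $J_b$ varies continuously with $b$ (for example, locally in any oriented orthonormal frame, $J$ is given by the fixed matrix $\stwotwo{0}{-1}{1}{0}$), so it defines a bundle endomorphism $J \colon E \to E$ with $J^2 = -\idbb_E$, i.e., a complex structure on $E$. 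Once $E$ is complex, the path $t \mapsto e^{i\pi t}\idbb_E = \cos(\pi t)\idbb_E + \sin(\pi t) J$ is a turning.

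I expect no significant obstacle here: both directions of the equivalence reduce to facts already stated or immediate from the definition. The only thing that needs a brief justification is the existence of the complex structure on an oriented rank-$2$ bundle, and this is standard and can be dispatched in a sentence using the identification of structure groups $SO_2 \cong U_1$.
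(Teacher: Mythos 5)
Your proof is correct and follows exactly the paper's (one-line) argument: forward from Lemma \ref{l:tble-oble}, and backward by noting that an orientable rank-$2$ bundle admits a complex structure (via $SO_2 \cong U_1$) and that complex bundles are turnable via $t \mapsto e^{i\pi t}\idbb_E$. You merely spell out the well-known fact the paper invokes in a single sentence before the proposition.
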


From now on we will focus on oriented bundles $\pi \colon E \to B$ of rank-$2k$ and we assume that $2k > 2$ 
unless otherwise stated.  
Since we are assuming that $B$ is connected, it follows that an orientable bundle has precisely two possible orientations and we let $\Bar{E}$ denote the same bundle with opposite orientation.

\begin{definition} \label{d:omg_gmm}
For a path $\gamma \in \Omegapmone$, let $\Omega_\gamma SO_{2k} \subset \Omegapmone$ denote the connected component of $\gamma$.
\end{definition}

\begin{definition}[$\gamma$-turnable, positive/negative turnable]
Let $E$ be a rank-$2k$ bundle and $\gamma \in \Omegapmone$. We say that $E$ is {\em $\gamma$-turnable}, if it has a turning whose restriction to each fibre $E_b$ lies in $\Omega_\gamma SO_{2k}$ under an orientation-preserving identification $E_b \cong \R^{2k}$. 

We will also say that $E$ is {\em positive/negative turnable}, if it has a turning which determines the positive/negative orientation on $E$.
\end{definition}

Obviously, if $\gamma$ is homotopic to $\gamma'$, then $\gamma$-turnability is equivalent to $\gamma'$-turnability. By definition positive turnability is equivalent to $\beta$-turnability and negative turnability is equivalent to $\bar{\beta}$-turnability. A bundle is turnable if and only if it is positive turnable or negative turnable. Finally, a bundle $E$ is positive turnable if and only if $\Bar{E}$ is negative turnable.
For the next definition, recall that $E$ is called {\em chiral} if $E$ is not isomorphic
to $\Bar{E}$.

\begin{definition}[Bi-turnable, strongly chiral]
If $E$ is both positive and negative turnable, we call it {\em bi-turnable}. If $E$ is turnable but not bi-turnable, we say that $E$ is {\em strongly chiral}.
\end{definition}

\noindent
If $E \cong \Bar{E}$, then $E$ cannot be strongly chiral. This shows that strong chirality implies chirality.

\begin{definition}[Turning type] \label{d:tability}
The {\em turning type} of an orientable rank-$2k$ bundle is the property of being either 
bi-turnable, strongly chiral or not turnable.
\end{definition}

\subsection{The associated turning bundle} \label{ss:assoc-b}
We can also think of a turning of a bundle as a continuous choice of turning in each fibre. 
To make this precise we define the associated turning bundle below, 
in analogy with the associated automorphism bundle.

Every oriented rank-$2k$ vector bundle $\pi \colon E \to B$ has an associated principal $SO_{2k}$-bundle, 
namely the frame bundle $\Fr(E)$, whose fibres consist of oriented, orthonormal frames of the fibres of $E$. 
We will view such frames as linear isomorphisms $\phi_b \colon \R^{2k} \to E_b$.
Then $SO_{2k}$ acts on the right on the total space of $\Fr(E)$ via pre-composition. 

\begin{definition}[The associated automorphism bundle and the associated turning bundle]
For an oriented, rank-$2k$ vector bundle $E\to B$ we define via the Borel construction:
\begin{compactenum}[a)]
\item The associated {\em automorphism bundle} 
\[
\Aut(E) \coloneqq \Fr(E) \times_{SO_{2k}}SO_{2k}  \rightarrow B,\]
where $SO_{2k}$ acts on itself by conjugation;
\item The associated {\em turning bundle}
\[
\Turn(E) \coloneqq \Fr(E) \times _{SO_{2k}} \Omegapmone \rightarrow B,\]
where $SO_{2k}$ acts on $\Omegapmone$ by pointwise conjugation.
\end{compactenum}
\end{definition}

\begin{remark} \label{r:aut-turn}
The fibre of $\Aut(E)$ over $b \in B$ can be identified with $SO(E_b)$, with the equivalence class 
$[\phi_b, A] \in \FSOcrossSO$ corresponding to $\phi_b \circ A \circ \phi_b ^{-1} : E_b \rightarrow E_b$. 
Similarly, the fibre of $\Turn(E)$ over $b$ consists of the turnings of $E_b$, with $[\phi_b,\gamma]$ corresponding to the path $t \mapsto \phi_b \circ \gamma(t) \circ \phi_b ^{-1}$. 
\end{remark}

A turning of a bundle $E$ restricts to a turning of each fibre and so determines a section of $\Turn(E)$. 
In this way we obtain a homeomorphism between the space of turnings of $E$ and the space of sections of 
$\Turn(E)$.  In particular we have

\begin{lemma} \label{l:TurnE}
A vector bundle $E$ is turnable if and only if $\Turn(E) \to B$ has a section.  \qed
\end{lemma}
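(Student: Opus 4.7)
The plan is to justify (and expand slightly) the statement made in the paragraph preceding the lemma, namely that the space of turnings of $E$ is homeomorphic to the space of sections of $\Turn(E)$. Granted such a homeomorphism, non-emptiness of one side is equivalent to non-emptiness of the other, which is exactly the biconditional of the lemma by Definition~\ref{d:Tbundle}.

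First I would construct the forward map. Given a turning $\psi_t$ of $E$, restriction to the fibre $E_b$ yields, for each $b \in B$, a path $\psi_t|_{E_b}$ in $SO(E_b)$ from $\idbb_{E_b}$ to $-\idbb_{E_b}$, i.e.\ a turning of the fibre. By Remark~\ref{r:aut-turn}, this turning is exactly a point of $\Turn(E)_b$: choosing any frame $\phi_b \in \Fr(E)_b$, the turning of $E_b$ corresponds to the class $[\phi_b, \phi_b^{-1} \circ \psi_\bullet|_{E_b} \circ \phi_b] \in \Fr(E) \times_{SO_{2k}} \Omegapmoneblank SO_{2k}$. Assembling over $B$ produces a section $s_\psi$ of $\Turn(E) \to B$. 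Continuity of $s_\psi$ is checked locally: on a trivialising open set $U \subseteq B$ over which $\Fr(E)$ admits a continuous frame $\phi \colon U \to \Fr(E)$, the section is given by $b \mapsto [\phi(b),\, \phi(b)^{-1} \circ \psi_\bullet|_{E_b} \circ \phi(b)]$, which is continuous because $\psi$ is continuous on $I \times E$ and the compact-open topology on $\Omegapmoneblank SO_{2k}$ is exactly what makes the adjoint of a continuous map continuous.

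Next I would construct the inverse. Given a section $s$ of $\Turn(E)$, define $\psi_t \colon E \to E$ by letting $\psi_t(v) := \gamma_b(t)(v)$ for $v \in E_b$, where $\gamma_b \in \Omega_{\pm \idbb}SO(E_b)$ is the turning of $E_b$ corresponding to $s(b)$ under the identification of Remark~\ref{r:aut-turn}. For each fixed $t$, $\psi_t$ is a bundle automorphism because $\gamma_b(t) \in SO(E_b)$ fibrewise; at $t=0$ it is $\idbb_E$ and at $t=1$ it is $-\idbb_E$. Continuity of $(t, v) \mapsto \psi_t(v)$ on $I \times E$ is again a local trivialisation check using a continuous frame $\phi \colon U \to \Fr(E)$: writing $s(b) = [\phi(b), \widetilde\gamma_b]$ with $\widetilde\gamma_b \in \Omegapmoneblank SO_{2k}$ depending continuously on $b$, one has $\psi_t(v) = \phi(b) \circ \widetilde\gamma_b(t) \circ \phi(b)^{-1}(v)$, which is manifestly continuous by the same adjunction.

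The two constructions are inverse to each other by construction, so turnings of $E$ and sections of $\Turn(E)$ are in bijection; in particular $E$ is turnable iff $\Turn(E)$ admits a section. There is no real obstacle here beyond the bookkeeping of continuity via local trivialisations, which is standard for Borel-constructed fibre bundles; the only point to take care with is the use of the compact-open topology on $\Omegapmoneblank SO_{2k}$ to pass between maps $I \times E \to E$ and maps $B \to \Turn(E)$ via the exponential law.
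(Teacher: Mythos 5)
Your proof is correct and takes exactly the approach the paper takes: the lemma is stated with a $\qed$ because the preceding paragraph already observes that restricting a turning to each fibre gives a section of $\Turn(E)$, and that this sets up a homeomorphism between the space of turnings and the space of sections. You have simply spelled out the two directions of that bijection and the local-trivialisation/compact-open continuity checks, which is exactly the bookkeeping the paper leaves implicit.
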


\begin{definition} \label{d:univ-turn}
Fix a model $BSO_{2k}$ for the classifying space of oriented rank-$2k$ vector bundles
and let $VSO_{2k} \to BSO_{2k}$ denote the universal rank-$2k$ bundle.
We define $BT_{2k} := \Turn(VSO_{2k})$ to be the total space of the associated turning bundle
and let $\pi_{2k} : BT_{2k} \rightarrow BSO_{2k}$ be its projection. 
\end{definition}

\begin{remark}
The symbol $BT_{2k}$ should be read as a single unit.
Defining a topological monoid $T_{2k}$ whose classifying 
space is the associated turning bundle of the universal bundle $VSO_{2k} \to BSO_{2k}$
is an interesting question, but we will not address it in this paper.
\end{remark}

Below we explain how $BT_{2k}$ acts as a classifying space for turned vector bundles.

\begin{proposition} \label{p:tble-lift}
A rank-$2k$ bundle $E$ over a CW-complex $X$ is turnable if and only if its classifying map 
$f : X \rightarrow BSO_{2k}$ can be lifted over $\pi_{2k} \colon BT_{2k} \to BSO_{2k}$.
\end{proposition}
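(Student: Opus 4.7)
The plan is to reduce to Lemma~\ref{l:TurnE} via the naturality of the associated turning bundle construction. By that lemma, $E$ is turnable if and only if $\Turn(E) \to X$ admits a section, so it suffices to establish a natural bijection between sections of $\Turn(E) \to X$ and lifts of $f$ through $\pi_{2k}$.

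The first step is to identify $\Turn(E)$ with the pullback $f^* BT_{2k}$. A classifying map $f \colon X \to BSO_{2k}$ for $E$ is covered by a bundle map $\bar f \colon E \to VSO_{2k}$, which in turn induces an $SO_{2k}$-equivariant map $\Fr(E) \to \Fr(VSO_{2k})$ exhibiting $\Fr(E)$ as the pullback of $\Fr(VSO_{2k})$ along $f$. Applying the Borel construction $- \times_{SO_{2k}} \Omegapmone$ preserves this pullback, so we obtain a pullback square
\[
\begin{tikzcd}
\Turn(E) \ar[r] \ar[d] & BT_{2k} \ar[d, "\pi_{2k}"] \\
X \ar[r, "f"'] & BSO_{2k}.
\end{tikzcd}
\]

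The second step is the standard correspondence for pullback squares: sections of $\Turn(E) \to X$ are in bijection with lifts $\tilde f \colon X \to BT_{2k}$ of $f$. A section $s$ yields a lift by postcomposing with the top arrow $\Turn(E) \to BT_{2k}$, while a lift $\tilde f$ paired with $\id_X$ factors uniquely through $\Turn(E) = f^* BT_{2k}$ by the universal property of the pullback, and this factorisation is a section since its composite with the left vertical map is $\id_X$. Combining this bijection with Lemma~\ref{l:TurnE} completes the proof. I do not anticipate a substantive obstacle here, as the argument is formal bookkeeping with the Borel construction and pullbacks of principal bundles.
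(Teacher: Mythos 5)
Your proposal is correct and follows the same route as the paper's proof: identify $\Turn(E)$ with the pullback $f^*(\Turn(VSO_{2k})) = f^*BT_{2k}$ and then invoke the universal property of pullbacks to pass between sections of $\Turn(E) \to X$ and lifts of $f$, closing the loop with Lemma~\ref{l:TurnE}. The only difference is that you spell out the intermediate step that the Borel construction preserves the pullback of frame bundles, which the paper leaves implicit.
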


\begin{proof}
Since $E \cong f^*(VSO_{2k})$, we have $\Turn(E) \cong f^*(\Turn(VSO_{2k}))$; i.e.\ there is a pullback diagram
\[
\xymatrix{
\Turn(E) \ar[r] \ar[d] & 
BT_{2k} \ar[d]^-{\pi_{2k}} \\
X \ar[r]^-{f} & BSO_{2k}.
}
\]
It follows from the universal property of pullbacks that $f$ can be lifted to $BT_{2k}$ if and only if $\Turn(E)$ has a section, which is equivalent to the turnability of $E$.
\end{proof}

We now show how $BT_{2k}$ classifies rank-$2k$ turned vector bundles.

\begin{definition}
Let $VT_{2k} := \pi_{2k}^*(VSO_{2k}) \to BT_{2k}$.
\end{definition}

Note that $VT_{2k}$ has a canonical turning, which we denote $\psi^{\rm c}_t$.
Since $VT_{2k}$ is defined as a pullback of $VSO_{2k}$, its fibre over $x \in BT_{2k}$ can be identified with $(VSO_{2k})_y$, the fibre of $VSO_{2k}$ over $y = \pi_{2k}(x)$. 
By Remark \ref{r:aut-turn}, $x$ itself can be regarded as a turning of $(VSO_{2k})_y$ and hence of $(VT_{2k})_x$. That is, each fibre $(VT_{2k})_x$ of $VT_{2k}$ comes equipped with a turning (which varies continuously with $x$), showing that $\Turn(VT_{2k})$ has a canonical section.

The turned bundle $(VT_{2k} \to BT_{2k}, \psi^{\rm c}_t)$ is universal in the two senses explained in Theorem \ref{thm:VT-univ} below.

\begin{definition}
For a space $X$, let $\TB_{2k}(X)$ be the set of isomorphism classes of rank-$2k$ turned bundles over $X$: 
it consists of equivalence classes of turned bundles over $X$, where two turned bundles are equivalent if there is an isomorphism between them under which their turnings are homotopic.
\end{definition}

\begin{theorem}[$(VT_{2k}, \psi^{\rm c}_t)$ is a universal rank-$2k$ turned bundle] \label{thm:VT-univ}
\hfill
\begin{compactenum}[a)]
\item If a rank-$2k$ bundle $E$ over a CW-complex $X$ is equipped with a turning $\psi_t$, 
then there is a homotopically unique map $X \rightarrow BT_{2k}$ which induces $(E, \psi_t)$ 
from $(VT_{2k}, \psi^{\rm c}_t)$.
\item For every CW-complex $X$ there is a bijection $\TB_{2k}(X) \cong [X, BT_{2k}]$.
\end{compactenum}
\end{theorem}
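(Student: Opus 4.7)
The plan is to reduce both parts to the universal property of the oriented frame bundle classification $VSO_{2k} \to BSO_{2k}$ together with Lemma \ref{l:TurnE}. The key bridge is the bijection between turnings of a bundle $E \to X$ and sections of the associated turning bundle $\Turn(E) \to X$: since $E \cong f^*VSO_{2k}$ for a classifying map $f$ which is unique up to homotopy, and $\Turn(E) \cong f^*BT_{2k}$ as bundles over $X$, sections of $\Turn(E)$ correspond naturally to lifts $\tilde f \colon X \to BT_{2k}$ of $f$ along $\pi_{2k}$.

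For existence in (a), I would take the turning $\psi_t$ of $E$, convert it via Lemma \ref{l:TurnE} into a section $s_\psi$ of $\Turn(E)$, and define $\tilde f \colon X \to BT_{2k}$ to be the lift of $f$ corresponding to $s_\psi$ under the pullback bijection above. Then $\tilde f^*(VT_{2k}) = \tilde f^*\pi_{2k}^*(VSO_{2k}) \cong f^*(VSO_{2k}) \cong E$. To check that $\tilde f^*(\psi^{\rm c}_t)$ matches $\psi_t$, note that by construction $\tilde f(x) \in BT_{2k}$ is exactly the point representing the turning $\psi_t$ on the fibre $E_x$. By Remark \ref{r:aut-turn} and the defining property of $\psi^{\rm c}_t$ following Definition \ref{d:univ-turn}, the turning to which $\psi^{\rm c}_t$ restricts on the fibre $(\tilde f^*VT_{2k})_x \cong E_x$ is precisely $\psi_t|_{E_x}$, so after the canonical identification $\tilde f^*VT_{2k} \cong E$ we recover $\psi_t$ on the nose.

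For uniqueness in (a), suppose $\tilde f_0, \tilde f_1 \colon X \to BT_{2k}$ both induce $(E, \psi_t)$ up to equivalence of turned bundles. Their projections to $BSO_{2k}$ both classify $E$ and so are homotopic via some $H \colon X \times I \to BSO_{2k}$. The equivalence data provide two turnings of $E$ which are homotopic through turnings, i.e.\ two sections of $\Turn(E)$ connected by a path of sections. Viewed as a section of the pullback $H^*BT_{2k} \to X \times I$, this path of sections corresponds under the pullback bijection to a lift of $H$ extending $\tilde f_0$ at time $0$ and $\tilde f_1$ at time $1$, which is the desired homotopy $\tilde f_0 \simeq \tilde f_1$. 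The step uses only the universal property of the pullback and the homotopy lifting property for $\pi_{2k}$, which holds because $\Turn(VSO_{2k})$ is a fibre bundle (associated to $VSO_{2k}$ via the Borel construction).

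Part (b) is then formal. The assignment $[\tilde f] \mapsto [\tilde f^*(VT_{2k}, \psi^{\rm c}_t)]$ is well defined because a homotopy $\tilde f_0 \simeq \tilde f_1$ pulls $(VT_{2k}, \psi^{\rm c}_t)$ back to an equivalence of turned bundles over $X$; surjectivity and injectivity of the resulting map $[X, BT_{2k}] \to \TB_{2k}(X)$ are exactly the existence and uniqueness from part (a). The main technical obstacle, such as it is, is bookkeeping: one must verify that the canonical identification $\tilde f^*VT_{2k} \cong E$ is natural enough to intertwine $\tilde f^*(\psi^{\rm c}_t)$ with $\psi_t$ itself, rather than merely yielding two homotopic turnings of the same bundle under distinct isomorphisms. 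Unpacking the Borel construction defining $\Turn(VSO_{2k})$ and chasing through Remark \ref{r:aut-turn} makes this precise.
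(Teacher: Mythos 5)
Your argument is in the same spirit as the paper's: both hinge on the pullback bijection between sections of $\Turn(E)$ and lifts of the classifying map along $\pi_{2k}$, together with the universal property of $VSO_{2k}$. The paper packages both existence and uniqueness into a single, stronger claim for part a) --- that the space of pairs $(g, \bar g)$, with $g : X \to BT_{2k}$ and $\bar g : E \to g^*(VT_{2k})$ an isomorphism respecting the turnings exactly, is contractible --- by exhibiting a homeomorphism from this space to the analogous space of classifying pairs for $VSO_{2k}$, which is contractible by universality. Your existence step matches the paper's. Your uniqueness step, however, has a gap worth flagging: you pick ``some'' homotopy $H$ between the projections $\pi_{2k}\circ\tilde f_0$ and $\pi_{2k}\circ\tilde f_1$ and then treat your path of sections of $\Turn(E)$ as a section of $H^*BT_{2k} \to X\times I$. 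For that to make sense you need an identification $H^*VSO_{2k}\cong E\times I$ restricting at the two ends to the given bundle isomorphisms $\alpha_i : E \to \tilde f_i^*(VT_{2k})$, and an arbitrary $H$ need not admit one --- the endpoint identification you get by transporting $\alpha_0$ across the cylinder can differ from $\alpha_1$ by a nontrivial automorphism of $E$. You need $H$ chosen compatibly with $\alpha_0,\alpha_1$, which is exactly what the contractibility of the classifying-pairs space of $VSO_{2k}$ (as opposed to mere homotopy-uniqueness of classifying maps) supplies. Once such an $H$ is fixed, your argument closes, and then part b) really is immediate from your a), because your a) already allows equivalence up to homotopy of turnings; the paper, by contrast, must do one extra vertical-homotopy step in b) precisely because its a) is phrased for isomorphisms respecting turnings on the nose.
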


\begin{proof}
a) In fact a stronger statement holds: the space of pairs $(g,\bar{g})$, where $g : X \rightarrow BT_{2k}$ is a continuous map and $\bar{g} : E \rightarrow g^*(VT_{2k})$ is an isomorphism respecting the given turnings, is contractible. To such a pair $(g,\bar{g})$ we assign a pair $(f,\bar{f})$, where $f : X \rightarrow BSO_{2k}$ is a continuous map and $\bar{f} : E \rightarrow f^*(VSO_{2k})$ is an isomorphism, by letting $f = \pi_{2k} \circ g$ and $\bar{f} = \bar{g}$ (using that $f^*(VSO_{2k}) = g^*(\pi_{2k}^*(VSO_{2k})) = g^*(VT_{2k})$). Each pair $(f,\bar{f})$ determines a pullback diagram as in the proof of Proposition \ref{p:tble-lift}. It follows from the pullback property that $f$ has a unique lift $g : X \rightarrow BT_{2k}$ corresponding to the given turning of $E$ (section of $\Turn(E)$) and if we regard $\bar{f}$ as an isomorphism $\bar{g} : E \rightarrow g^*(VT_{2k})$, then this $\bar{g}$ respects the turnings. This shows that the assignment $(g,\bar{g}) \mapsto (f,\bar{f})$ is a homeomorphism. And since $VSO_{2k}$ is a universal bundle, the space of pairs $(f,\bar{f})$ is contractible.

b) To a map $g : X \rightarrow BT_{2k}$ we assign $g^*(VT_{2k})$ with its induced turning. This way we obtain a well-defined map $[X, BT_{2k}] \rightarrow \TB_{2k}(X)$, because a homotopy of $g$ induces a bundle over $X \times I$ with a turning and after identifying this bundle with $g^*(VT_{2k}) \times I$ its turning determines a homotopy between the turnings over $X \times \{ 0 \}$ and $X \times \{ 1 \}$. It follows from Part a)that this map is surjective. 

Suppose that two maps $g_1,g_2 : X \rightarrow BT_{2k}$ determine the same element in $\TB_{2k}(X)$. This means that, after identifying $g_1^*(VT_{2k})$ with $E := g_2^*(VT_{2k})$ via some isomorphism, the induced turnings on $E$ are homotopic, i.e.\ there is a homotopy between the corresponding sections of $\Turn(E)$. This homotopy then determines a homotopy (via lifts of $\pi_{2k} \circ g_1 : X \rightarrow BSO_{2k}$) between $g_1$ and another lift $g_1' : X \rightarrow BT_{2k}$ such that under the isomorphism $g_2^*(VT_{2k}) = E \cong g_1^*(VT_{2k}) = (\pi_{2k} \circ g_1)^*(VSO_{2k}) = (\pi_{2k} \circ g_1')^*(VSO_{2k}) = (g_1')^*(VT_{2k})$ the same turning is induced on $g_2^*(VT_{2k})$ and $(g_1')^*(VT_{2k})$. By Part a), this implies that $g_1'$ is homotopic to $g_2$. Therefore the map $[X, BT_{2k}] \rightarrow \TB_{2k}(X)$ is also injective, hence it is a bijection.
\end{proof}

\begin{remark} \label{r:assoc-h}
In the constructions of this section, instead of $\Omegapmone$ we could use one of its connected components, 
$\Omega_\gamma SO_{2k}$ for a $\gamma \in \Omegapmone$.
Then the turning bundle $\Turn(E)$ would be replaced with its 
subbundle $\Turn^{\gamma}(E)$ and $BT_{2k}$ with its connected component $BT^{\gamma}_{2k} = \Turn^{\gamma}(VSO_{2k})$. 
A bundle $E$ over a $CW$-complex $X$ is 
$\gamma$-turnable if and only if $\Turn^{\gamma}(E)$ has a section and if and only if its classifying map $f \colon X \to BSO_{2k}$ can be lifted to $BT^{\gamma}_{2k}$. Moreover, 
$VT^{\gamma}_{2k} := VT_{2k} \big| _{BT^{\gamma}_{2k}}$ is universal among bundles equipped with a  
$\gamma$-turning.
\end{remark}

\subsection{The gauge group}  \label{ss:GG}
For an oriented rank-$2k$ vector bundle $\pi \colon E \to B$, recall that $\Fr(E)$ denotes the 
frame bundle of $E$, which is a principal $SO_{2k}$-bundle over $B$.
As an elementary exercise in linear algebra shows, the space of automorphisms of a vector bundle $E \to B$
is canonically homeomorphic to the gauge group of $\Fr(E)$, as defined in \cite[Ch.\ 7]{h94} and we will use
these topological groups interchangeably, denoting them by $\GG_E$.
In this section we relate the existence of turnings on $E$ to the topology of $\GG_E$.

The automorphisms $\idbb_E$ and $-\idbb_E$ define elements of $\GG_E$.
Specifically, since $-\idbb$ lies in $Z(SO_{2k})$, the centre of $SO_{2k}$, we obtain the global map
$-\idbb_E \in \GG_E,\,p \mapsto p(-\idbb)$.
Considering $[\idbb_E], [-\idbb_E] \in \pi_0(\GG_E)$, we see from
Definition~\ref{d:Tbundle} that $E$ is turnable if and only 
if $[-\idbb_E] = [\idbb_E] \in \pi_0(\GG_E)$.
Indeed, somewhat more is true as we now explain.

Fixing a frame $p \in \Fr(E)$ over $b = \pi(p)$ and restricting to the fibre of $\Fr(E) \to B$ over $b$, 
we obtain a continuous homomorphism of topological groups $r_p \colon \GG_E \to SO_{2k}$.
Replacing $SO_{2k}$ by the mapping cylinder of $r_p$, we regard $r_p$ as an inclusion and 
consider the pair $(SO_{2k}, \GG_E)$.
A path $\gamma \in \Omega_{\pm \idbb} SO_{2k}$ defines an element
$[\gamma]_\GG \in \pi_1(SO_{2k}, \GG_E)$, by identifying $\idbb_E, -\idbb_E \in \GG_E$ with $r_p(\idbb_E)=\idbb, r_p(-\idbb_E)=-\idbb \in SO_{2k}$ respectively and viewing $\gamma$ as a path in $SO_{2k}$ connecting $\idbb_E$ and $-\idbb_E$.
Since $r_p \colon \GG_E \to SO_{2k}$ is a homomorphism of topological groups,
$\pi_1(SO_{2k}, \GG_E)$ inherits a group structure from the group structures on $\GG_E$ and $SO_{2k}$
and we denote the unit by $e$.
Then we have

\begin{lemma} \label{l:h_and_barh}
A bundle $E$ is $\gamma$-turnable if and only if $[\gamma]_\GG = e \in \pi_1(SO_{2k}, \GG_E)$. \qed
\end{lemma}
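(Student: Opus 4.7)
The plan is to show that both sides of the biconditional unpack to the same condition: the existence of a path $\psi_t \colon I \to \GG_E$ from $\idbb_E$ to $-\idbb_E$ whose image $r_p \circ \psi_t$ in $SO_{2k}$ is path-homotopic, rel endpoints, to $\gamma$. I would begin by making this translation explicit on the relative $\pi_1$ side. Using the mapping cylinder of $r_p$ to realise $\GG_E$ as a subspace of $SO_{2k}$ with basepoint $\idbb$, the class $[\gamma]_\GG$ is represented by $\gamma$ viewed as a map $(I, 0, 1) \to (SO_{2k}, \idbb, \GG_E)$, and the unit $e$ is the class of the constant path at $\idbb$. Consequently, $[\gamma]_\GG = e$ exactly when there is a homotopy $H \colon I \times I \to SO_{2k}$ with $H(\cdot, 0) = \gamma$, $H(\cdot, 1) \equiv \idbb$, $H(0, s) = \idbb$, and $H(1, s) \in \GG_E$. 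The path $\psi_t := H(1, 1-t)$ then lies in $\GG_E$ and runs from $\idbb_E$ to $-\idbb_E$; evaluating $H$ on the boundary of the unit square and using that the top and left edges are constant at $\idbb$ shows $r_p \circ \psi_t \simeq \gamma$ rel endpoints. Conversely, any such $\psi_t$ allows one to build the required square, so the two formulations agree.

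For the forward direction, I would take a $\gamma$-turning $\psi_t$ of $E$. By definition its restriction to the fibre $E_b$, identified with $\R^{2k}$ via the frame $p$, lies in $\Omega_\gamma SO_{2k}$; in other words $r_p \circ \psi_t$ is path-homotopic rel endpoints to $\gamma$. The unpacking above then gives $[\gamma]_\GG = e$.

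For the reverse direction, if $[\gamma]_\GG = e$ then the unpacking supplies a path $\psi_t$ in $\GG_E$ from $\idbb_E$ to $-\idbb_E$ with $r_p \circ \psi_t \simeq \gamma$ rel endpoints. This $\psi_t$ is a turning of $E$, and its restriction to the fibre over $b$ already lies in $\Omega_\gamma SO_{2k}$ by construction. It remains only to promote this from the single fibre over $b$ to all fibres so as to conclude that $\psi_t$ is a genuine $\gamma$-turning. For this I would invoke Remark \ref{r:assoc-h}: the turning bundle $\Turn(E)$ splits as a disjoint union of sub-bundles $\Turn^{\gamma'}(E)$ indexed by the two homotopy classes of paths in $\Omegapmone$, and the section of $\Turn(E)$ determined by $\psi_t$ hits $\Turn^\gamma(E)$ at $b$, hence lies in $\Turn^\gamma(E)$ everywhere by connectedness of $B$.

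I expect the main subtlety to be the careful interpretation of $\pi_1(SO_{2k}, \GG_E)$ and its unit $e$; once that bookkeeping is in place, the lemma follows by a direct unwinding of definitions together with the fibre-to-bundle promotion via connectedness of $B$.
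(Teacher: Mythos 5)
Your proof is correct and takes the same route the paper's $\qed$ implicitly invokes: a direct unwinding of the definitions of $[\gamma]_\GG$ and of a $\gamma$-turning, via the mapping cylinder of $r_p$. The only step beyond pure bookkeeping is the promotion of the fibrewise condition from the single fibre over $b$ to all fibres using connectedness of $B$, which you handle correctly via the splitting $\Turn(E) = \bigsqcup \Turn^{\gamma'}(E)$ from Remark~\ref{r:assoc-h}.
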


Given the above, it is natural to consider the final segment of the homotopy long exact sequence
of the pair $(SO_{2k}, \GG_E)$, which runs as follows:
\begin{equation} \label{eq:LES}
 \dots \to \pi_1(\GG_E) \xra{~(r_p)_*~} \pi_1(SO_{2k}) \xra{~~~} \pi_1(SO_{2k}, \GG_E) \xra{~~~}
\pi_0(\GG_E) \to 0
\end{equation}
Now $[\bar \beta]_\GG = [\beta]_\GG + [\eta]$, where $+$ denotes the natural action of $\pi_1(SO_{2k})$
on $\pi_1(SO_{2k}, \GG_E)$ and $[\eta] \in \pi_1(SO_{2k})$ is the generator.
We see that $[\beta]_\GG = [\bar \beta]_\GG$ if and only 
if $(r_p)_* \colon \pi_1(\GG_E) \to \pi_1(SO_{2k})$ is onto.  For example, in Section \ref{ss:S4} we shall see that there
are rank-$4$ bundles $E \to S^4$, which are $\beta$-turnable but not $\bar \beta$-turnable.
Applying Lemma~\ref{l:h_and_barh} we see that for these bundles $[\beta]_\GG \neq [\bar \beta]_\GG
\in \pi_1(SO_{2k}, \GG_E)$ and hence the map $(r_p)_* \colon \pi_1(\GG_E) \to \pi_1(SO_{2k})$ is zero.
In fact, the homomorphism $\pi_1(\GG_E) \to \pi_1(SO_{2k})$ is closely related to
the ``turning obstruction for the essential loop in $SO_{2k}$" and 
we next discuss turnings in a more general setting.

\subsection{The central groupoid and general turnings}  \label{ss:gt}
In this subsection we generalise the definition of a turning.
Let $G$ be a path-connected topological group with centre $Z(G)$.  
For the computations in this paper the groups $SO_{2k}$, their double covers $Spin_{2k}$ and their quotients $PSO_{2k} := SO_{2k}/\{\pm \idbb\}$ will be relevant and we will consider these groups in more detail at the end of this subsection.

Let $\pi \colon P \to B$ be a principal $G$-bundle over a path-connected space $B$.
The gauge group of $P$, denoted $\GG_P$, is the group of $G$-equivariant
fibrewise automorphisms of $P$.  
Given $z \in Z(G)$, fibrewise multiplication by $z$ defines a central element $z_P \in Z(\GG_P)$,
where for all $p \in P$
\[ z_P(p) := p \cdot z.\]
We note that if $Z(G)$ is discrete, then the map 
$Z(G) \to Z(\GG_P), z \mapsto z_P$, is an isomorphism.
We shall be interested in paths $\gamma \colon I \to G$ which start and end at elements of the centre $Z(G)$
and whether they can be lifted to paths in $\GG_P$ which start and end at $\gamma(0)_P$ and $\gamma(1)_P$.
Hence we make the following
\begin{definition}[Central groupoid]
The {\em central groupoid} of $G$ is the restriction of the fundamental 
groupoid of $G$ to paths which start and end in the centre of $G$.
We will use $\pi^Z(G)$ to ambiguously denote the central groupoid of $G$ or the set of its morphisms.
\end{definition}

\begin{remark} \label{r:pi^Z(G)}
We note that point-wise multiplication gives $\pi^Z(G)$, the set of morphisms of the central groupoid,
a group structure and there is short exact sequence
\[ 1 \to \pi_1(G, e) \to \pi^Z(G) \to Z(G) \times Z(G) \to 1,\]
where $\pi^Z(G) \to Z(G) \times Z(G)$ is defined by $[\gamma] \mapsto (\gamma(0), \gamma(1))$
and $e \in G$ is the identity.  While we do not use this group structure in what follows,
it may be helpful for understanding $\pi^Z(G)$; e.g.\ it shows that $\pi^Z(SO_{2k})$ has $8$ morphisms.
\end{remark}

For a point $p \in P$, let $b = \pi(p)$ and $P_b := p \cdot G$ be the fibre of $P \to B$ over $b$.
We define the {\em restriction map}
\[ r_p \colon \GG_P \to G, \]
by restricting elements of the gauge group to the fibre over $b$
and using the equation
\[ \phi(p) = p \cdot r_p(\phi)\]
for all $\phi \in \GG_P$.
If we vary $p \in P_b$, then $r_{p \cdot g}(\phi) = g^{-1}r_p(\phi) g$ for all $\phi \in \GG_P$ and $g \in G$.
Recalling that $G$ and $B$ are path-connected, we see for any path $\phi_t \colon I \to \GG_P$
with $\phi_0 = z_P$ and $\phi_1 = z'_P$, that $[r_p(\phi_t)] \in \pi^Z(G)$ is independent 
of the choice of $p$.

\begin{definition}
Let $\phi_t \colon I \to \GG_P$ be a path such that $\phi_0 = z_P$ and $\phi_1 = z'_P$ for some $z,z' \in Z(G)$. We define $r(\phi_t) \in \pi^Z(G)$ to be $[r_p(\phi_t)]$ for any $p \in P$.
\end{definition}

\begin{definition}[$\gamma$-turning and $\gamma$-turnable]
Let $[\gamma] \in \pi^Z(G)$ be represented by a path $\gamma \colon I \to G$.
A {\em $\gamma$-turning} of a principal $G$-bundle $P$ is a path 
$\phi_t \colon I \to \GG_P$ with $\phi_0 = \gamma(0)_P, \phi(1) = \gamma(1)_P$ and
$r(\phi_t) = [\gamma] \in \pi^Z(G)$.
If $P$ admits a $\gamma$-turning then $P$ is called {\em $\gamma$-turnable}.
\end{definition}

We end this subsection by considering the groups $SO_{2k}$, $Spin_{2k}$ and $PSO_{2k}$. When $2k > 2$, we have $PSO_{2k} = SO_{2k}/Z(SO_{2k}) \cong Spin_{2k}/Z(Spin_{2k})$ and we list the centres and fundamental groups of these groups in the following tables (where $j \geq 1$), which follow from Lemma~\ref{l:pi_1PSO_2k} below:
\[
\begin{array}{c|c|c}
G & Z(G) & \pi_1(G) \\
\hline
Spin_{4j} & \Z/2 \oplus \Z/2 & \{e\} \\
SO_{4j} & \Z/2 & \Z/2 \\
PSO_{4j} & \{e\} & \Z/2 \oplus \Z/2 
\end{array}
\qquad \qquad \qquad 
\begin{array}{c|c|c}
G & Z(G) & \pi_1(G) \\
\hline
Spin_{4j+2} & \Z/4 & \{e\} \\
SO_{4j+2} & \Z/2 & \Z/2 \\
PSO_{4j+2} & \{e\} & \Z/4 
\end{array}
\]

\noindent
The next lemma is well-known but we include its proof to further illustrate the structure of the central groupoid of $SO_{2k}$.

\begin{lemma} \label{l:pi_1PSO_2k}
If $k \geq 2$, then $Z(PSO_{2k}) = \{ e \}$, $Z(Spin_{2k}) \cong \pi_1(PSO_{2k})$ and
\[
\pi_1(PSO_{2k}) \cong 
\begin{cases}
\Z/2 \oplus \Z/2 & \text{if $k$ is even,} \\
\Z/4 & \text{if $k$ is odd.}
\end{cases}
\]
\end{lemma}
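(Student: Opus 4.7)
The plan is to prove the three assertions in turn, using the double cover $Spin_{2k} \to SO_{2k}$ and the quotient $SO_{2k} \to PSO_{2k}$. The first claim, $Z(PSO_{2k}) = \{e\}$, follows by a short connectedness argument. If $[g] \in Z(PSO_{2k})$, then for every $h \in SO_{2k}$ the commutator $ghg^{-1}h^{-1}$ lies in the kernel $\{\pm \idbb\}$ of $SO_{2k} \to PSO_{2k}$. Since $SO_{2k}$ is connected and the map $h \mapsto ghg^{-1}h^{-1}$ is continuous with value $\idbb$ at $h = \idbb$, it is identically $\idbb$. Hence $g \in Z(SO_{2k}) = \{\pm \idbb\}$, so $[g] = e$.

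For the second claim, I would use that $Spin_{2k}$ is simply connected for $2k \geq 4$, together with the short exact sequence of topological groups $1 \to Z(Spin_{2k}) \to Spin_{2k} \to PSO_{2k} \to 1$. This realises $Spin_{2k} \to PSO_{2k}$ as a covering with discrete fibre $Z(Spin_{2k})$, and the long exact sequence of homotopy groups collapses to the isomorphism $\pi_1(PSO_{2k}) \cong \pi_0(Z(Spin_{2k})) = Z(Spin_{2k})$.

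For the computation of $Z(Spin_{2k})$, I would work in the Clifford algebra model with generators $e_1, \dots, e_{2k}$ satisfying $e_i e_j + e_j e_i = -2\delta_{ij}$, in which $Spin_{2k}$ is the subgroup generated by products of pairs of unit vectors of $\R^{2k}$. Since the double cover $Spin_{2k} \to SO_{2k}$ is a surjective homomorphism, it sends central elements to central elements, so $Z(Spin_{2k})$ is contained in the preimage $C$ of $Z(SO_{2k}) = \{\pm \idbb\}$, a subgroup of order $4$. The volume element $\omega = e_1 e_2 \cdots e_{2k}$ lies in the preimage of $-\idbb$, so $C = \{\pm 1, \pm \omega\}$. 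Since $\omega$ anticommutes with each $e_i$, it commutes with every product of two generators and hence with every element of $Spin_{2k}$, so $C = Z(Spin_{2k})$.

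The parity-dependent structure then comes from computing $\omega^2$. Reversing the order of the factors in the second copy of $\omega$ introduces the sign $(-1)^{k(2k-1)}$, after which the product telescopes using $e_i^2 = -1$ to yield $\omega^2 = (-1)^{k(2k-1)} = (-1)^k$. Hence $\omega$ has order $2$ when $k$ is even, giving $C \cong \Z/2 \oplus \Z/2$, and order $4$ when $k$ is odd, giving $C \cong \Z/4$. I expect the main obstacle to be purely book-keeping: the signs in the Clifford algebra calculation are convention-sensitive, and a careful write-up will need to fix the sign convention for $e_i^2$ and verify the identity $\omega^2 = (-1)^k$, but the group-theoretic scaffolding above is otherwise routine.
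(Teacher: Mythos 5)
Your proposal is correct, and the first two claims are handled essentially as in the paper (the connectedness argument for $Z(PSO_{2k}) = \{e\}$, and covering space theory for $Z(Spin_{2k}) \cong \pi_1(PSO_{2k})$; the paper phrases the latter via the universal cover $Spin_{2k} \to PSO_{2k}$ rather than via the long exact sequence, but the content is the same). Where you genuinely diverge is the third claim. The paper works entirely downstairs: it computes $\pi_1(PSO_{2k})$ via the central groupoid $\pi^Z(SO_{2k})$, whose morphisms are path-homotopy classes of paths $\beta, \bar\beta$ from $\idbb$ to $-\idbb$, and then uses the map $\pi^Z(SO_{2k}) \to \pi_1(PSO_{2k})$ together with the identity $[-\beta] = [\beta]^{-1}$ (when $k$ is even) or $[-\beta]=[\bar\beta]^{-1}$ (when $k$ is odd), which encodes whether $t \mapsto e^{-i\pi t}\idbb$ is path homotopic to $\beta$ or $\bar\beta$. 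You instead work upstairs in the Clifford algebra, identify $Z(Spin_{2k}) = \{\pm 1, \pm\omega\}$ with $\omega$ the volume element, and use $\omega^2 = (-1)^k$. Both computations are correct. Your route is the standard algebraic computation of the center of the Spin group and has the virtue of being self-contained and convention-explicit; the paper's route is chosen because it is expressed directly in terms of the objects the paper cares about ($\beta$, $\bar\beta$, $\eta$, and $\pi^Z(SO_{2k})$), so it simultaneously exhibits $\pi_1(PSO_{2k})$ as a quotient of $\pi^Z(SO_{2k})$, which is what the later applications (notably Theorem \ref{thm:gpd-appl}) actually use. One presentational note: your argument for claim two requires knowing that the kernel of $Spin_{2k} \to PSO_{2k}$ is exactly $Z(Spin_{2k})$, which you establish only in the course of proving claim three (showing $\omega$ is central, so $q^{-1}(\{\pm\idbb\}) = Z(Spin_{2k})$); it would be cleaner to reorder, or to observe that the same connectedness argument from claim one already gives $Z(Spin_{2k}) = q^{-1}(Z(SO_{2k}))$ without Clifford algebras.
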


\begin{proof}
To see that the centre of $PSO_{2k}$ is trivial, let $x \in SO_{2k}$ lie in the preimage of $Z(PSO_{2k})$.
Then the commutator $[x, \cdot\,]$ defines a map $SO_{2k} \rightarrow Z(SO_{2k})$. Since $Z(SO_{2k})$ is discrete and $[x,\idbb]=\idbb$, this is the constant $\idbb$ map.
Hence $x \in Z(SO_{2k})$ and thus $Z(PSO_{2k}) = \left\{ e \right\}$. 

If $q \colon Spin_{2k} \to SO_{2k}$ denotes the non-trivial double covering then 
we see that $Z(Spin_{2k}) = q^{-1}(Z(SO_{2k}))$.
Now $Z(SO_{2k}) = \{\pm \idbb\}$, the covering $q' \colon Spin_{2k} \to PSO_{2k}$ 
is the universal covering of $PSO_{2k}$ and it is the composition of $q$ and $SO_{2k} \to PSO_{2k}$.
It follows that $Z(Spin_{2k}) = (q')^{-1}([\idbb]) \cong \pi_1(PSO_{2k})$.

To compute $\pi_1(PSO_{2k})$ we first consider the central groupoid $\pi^Z(SO_{2k})$. It is generated by the morphisms $[\beta], [\bar{\beta}] : \idbb \rightarrow -\idbb$, subject to the relation $([\bar{\beta}]^{-1} \circ [\beta])^2 = \id_{\idbb}$; i.e.\ $[\bar{\beta}]^{-1} \circ [\beta] = [\eta]$ is the generator of $\pi_1(SO_{2k}) = \Z/2$. 
The following diagram shows the named morphisms in $\pi^Z(SO_{2k})$:
\[
\xymatrix{
-\idbb & \idbb \ar@/_/[l]_{[\beta]} \ar@/^/[l]^{[\bar{\beta}]} \ar@(dr,ur)[]_{[\eta]}
}
\]
The projection $SO_{2k} \to PSO_{2k}$ induces a surjective map of groupoids $\pi^Z(SO_{2k}) \rightarrow \pi_1(PSO_{2k})$, which sends two morphisms $[\gamma], [\gamma'] \in \pi^Z(SO_{2k})$ into the same element of 
$\pi_1(PSO_{2k})$ if and only if $[\gamma'] = [-\gamma]$, where $-\gamma$ denotes the path $\gamma$ multiplied pointwise by $-\idbb$.  Since 
\[
[-\beta] = 
\begin{cases}
[\beta]^{-1} & \text{if $k$ is even,} \\
[\bar{\beta}]^{-1} & \text{if $k$ is odd,}
\end{cases}
\]
the computation of $\pi_1(PSO_{2k})$ follows.
\end{proof}

\subsection{$\eta$-turnings and the path components of $\GG_E$} \label{ss:gg-eta}
Recall that $\eta$ denotes the generator of $\pi_1(SO_{2k})$ and that
by definition a rank-$2k$ vector bundle $E \to B$ is $\eta$-turnable
if and only if the restriction induces a surjective map $\pi_1(\GG_E) \to \pi_1(SO_{2k})$.
We return to our discussion of the exact sequence \eqref{eq:LES} from Section~\ref{ss:GG}
and first identify it with an isomorphic exact sequence.
Assuming that $b \in B$ is non-degenerate, 
the homomorphism $r_p \colon \GG_E \to SO_{2k}$
is onto and there is a short exact sequence of topological groups
\begin{equation} \label{eq:LES1}
  \GG_{E, 0} \xra{~~~} \GG_E \xra{r_p} SO_{2k}, 
\end{equation}
where, by definition $\GG_{E, 0} := \mathrm{Ker}(r_p) \subset \GG_E$.
Regarding~\eqref{eq:LES1} as a principal $\GG_{E, 0}$-bundle, it is classified by a map $SO_{2k} \to B\GG_{E, 0}$
such that 
\begin{equation} \label{eq:BLES1}
 \GG_E \xra{r_p} SO_{2k} \to B\GG_{E, 0} 
\end{equation} 
is a fibration sequence.
The homotopy long exact sequence of \eqref{eq:LES}
maps isomorphically to the homotopy long exact sequences of~\eqref{eq:BLES1} and~\eqref{eq:LES1} as follows:
\begin{equation} \label{eq:LES2}
\xymatrix{
 \pi_1(\GG_E)  \ar[d]^= \ar[r]^-{(r_p)_*} &
 \pi_1(SO_{2k})  \ar[d]^= \ar[r] &
  \pi_1(SO_{2k}, \GG_E) \ar[d]^\cong \ar[r] &
\pi_0(\GG_E)  \ar[d]^= \ar[r] & 
0   \\
\pi_1(\GG_E) \ar[d]^= \ar[r]^-{(r_p)_*} &
\pi_1(SO_{2k}) \ar[d]^= \ar[r] &
\pi_1(B\GG_{E, 0}) \ar[d]^\cong \ar[r] &
\pi_0(\GG_E) \ar[d]^= \ar[r] &
0 
\\
 \pi_1(\GG_E)  \ar[r]^-{(r_p)_*} &
 \pi_1(SO_{2k}) \ar[r] &
  \pi_0(\GG_{E, 0}) \ar[r] &
\pi_0(\GG_E) \ar[r] & 
0} 
\end{equation}

Now we fix the base space $B$ and suppose that $B = S X$ is a suspension.
For any rank-$2k$ vector bundle $E \to SX$,
there is a homotopy equivalence $\GG_{E, 0} \simeq \mathrm{Map}((SX, \ast), (SO_{2k}, \idbb))$
and in particular the homotopy type of $\GG_{E, 0}$ does not depend on the vector bundle $E$.
If $|\pi_0(\GG_{E, 0})| = \big| [SX, SO_{2k}] \big|$ is finite then the exact 
sequences above in \eqref{eq:LES2} show that $|\pi_0(\GG_E)|$ depends on the $\eta$-turnability
of $E$.  Specifically we have the following

\begin{theorem} \label{t:gauge-eta}
If $B = SX$ is a suspension and $n_{SX} := \left| [SX, SO_{2k}] \right|$ is finite,
then for any rank-$2k$ vector bundle $E \to SX$
\[ 
\bigl| \pi_0(\GG_E) \bigr| = 
\begin{cases}
n_{SX} & \text{if $E$ is $\eta$-turnable,} \\
\frac{n_{SX}}{2} & \text{if $E$ is not $\eta$-turnable.}
\end{cases}\]
In particular, the $\eta$-turnability of rank-$2k$ vector bundles over $SX$ with $n_{SX}$ finite is a homotopy invariant of
the gauge groups of these bundles. \qed
\end{theorem}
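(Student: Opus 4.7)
The plan is to extract the claim directly from the bottom row of the commutative diagram \eqref{eq:LES2}, namely the exact sequence
\[ \pi_1(\GG_E) \xrightarrow{(r_p)_*} \pi_1(SO_{2k}) \xrightarrow{\partial} \pi_0(\GG_{E,0}) \to \pi_0(\GG_E) \to 0 \]
arising from the principal $\GG_{E,0}$-bundle $\GG_{E,0} \to \GG_E \xrightarrow{r_p} SO_{2k}$. The preparatory step is to identify $\pi_0(\GG_{E,0})$ with $[SX, SO_{2k}]$: using the homotopy equivalence $\GG_{E,0} \simeq \Map((SX, \ast), (SO_{2k}, \idbb))$ recalled just above the theorem, $\pi_0(\GG_{E,0})$ is the set of based homotopy classes of maps $SX \to SO_{2k}$, and because $SO_{2k}$ is path-connected and $SX$ is a suspension, this set agrees with the set of free homotopy classes, giving $|\pi_0(\GG_{E,0})| = n_{SX}$.

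Next I would invoke the observation recorded at the start of Section~\ref{ss:gg-eta}, that $E$ is $\eta$-turnable if and only if $(r_p)_* \colon \pi_1(\GG_E) \to \pi_1(SO_{2k})$ is surjective; this is immediate since $\pi_1(SO_{2k}) \cong \Z/2$ is generated by $[\eta]$. By exactness of the displayed sequence, the image of $\partial$ equals $\Coker((r_p)_*)$ and coincides with the kernel of the surjection $\pi_0(\GG_{E,0}) \twoheadrightarrow \pi_0(\GG_E)$.

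The two cardinalities then follow by a direct count. If $E$ is $\eta$-turnable then $\Coker((r_p)_*) = 0$, so the map $\pi_0(\GG_{E,0}) \to \pi_0(\GG_E)$ is a bijection and $|\pi_0(\GG_E)| = n_{SX}$. If $E$ is not $\eta$-turnable then $(r_p)_* = 0$, so $\Coker((r_p)_*) \cong \Z/2$ and $\pi_0(\GG_E)$ is the quotient of $\pi_0(\GG_{E,0})$ by a subgroup of order two, yielding $|\pi_0(\GG_E)| = n_{SX}/2$. The homotopy-invariance statement is then automatic: $|\pi_0(\GG_E)|$ is a homotopy invariant of $\GG_E$, and for fixed $n_{SX}$ the two cases produce distinct finite values, so the $\eta$-turnability of $E$ can be read off from the homotopy type of $\GG_E$.

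There is essentially no hard step here: the substantive content, namely the fibration sequence~\eqref{eq:LES1}, the identification of $\GG_{E,0}$ with a based mapping space, and the reinterpretation of $\eta$-turnability via the restriction homomorphism on $\pi_1$, has been assembled in the preceding two subsections. The only minor point requiring care is the passage $[SX, SO_{2k}]_* = [SX, SO_{2k}]$ from based to free homotopy classes, which is standard for maps out of a suspension into a path-connected target.
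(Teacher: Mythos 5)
Your proposal matches the paper's own (implicit) argument exactly: the theorem is stated with a \qed because the preceding discussion in Section~\ref{ss:gg-eta} already assembled all the pieces — the exact sequence in the bottom row of \eqref{eq:LES2}, the identification $\pi_0(\GG_{E,0}) \cong [SX, SO_{2k}]$ via the homotopy equivalence $\GG_{E,0} \simeq \Map((SX,\ast),(SO_{2k},\idbb))$, and the reformulation of $\eta$-turnability as surjectivity of $(r_p)_* \colon \pi_1(\GG_E) \to \pi_1(SO_{2k}) \cong \Z/2$ — and the cardinality count is then immediate. Your write-up simply makes explicit the counting step and the based-versus-free homotopy classes point, both of which are correct and in the spirit the authors intended.
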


\begin{remark}
As one might expect,
if a rank-$2k$ vector bundle $E$ admits a spin structure,
then the $\eta$-turnability of $E$ is equivalent to the $\gamma_{z_0}$-turnability of the associated principal $Spin_{2k}$-bundle for a certain path $\gamma_{z_0}$ in $Spin_{2k}$.
Let $e \in Spin_{2k}$ be the identity and define $z_0 \in Z(Spin_{2k}) \setminus \{ e \}$ to 
be the unique element mapping to $\idbb \in SO_{2k}$.
Since $Spin_{2k}$ is simply connected, 
there is a unique path homotopy class of paths from $e$ to $z_0$, we denote this path by $\gamma_{z_0}$.
Given a principal $Spin_{2k}$-bundle $P \to B$, we can consider the $\gamma_{z_0}$-turning 
problem for $P$.
As $Spin_{2k}$ acts on $\R^{2k}$ via the double covering $Spin_{2k} \to SO_{2k}$
and the standard action of $SO_{2k}$, there is a rank-$2k$ vector bundle $E_P : = P \times_{Spin_{2k}} \R^{2k}$ associated to $P$
and it is not hard to see that the following statements are equivalent:
\begin{compactenum}
\item The principal $Spin_{2k}$-bundle $P \to B$ is $\gamma_{z_0}$-turnable;
\item The vector bundle $E_P$ associated to $P$ is $\eta$-turnable;
\item The map $(r_p)_* \colon \pi_1(\GG_{E_P}) \to \pi_1(SO_{2k})$ is onto.
\end{compactenum}
\end{remark}

\section{The turning obstruction}  \label{s:to}

In this section we define the turning obstruction for bundles over suspensions. 

First we consider rank-$2k$ oriented vector bundles. Over a suspension $SX$ such a bundle corresponds to an element of $[X, SO_{2k}]$. Given a path $\gamma \in \Omegapmone$ we first define a map $\To_{\gamma} : [X, SO_{2k}] \rightarrow [X, \Omega_\gamma SO_{2k}]$ (where $\Omega_\gamma SO_{2k} \subset \Omegapmone$ denotes the connected component of $\gamma$, see Definition \ref{d:omg_gmm}) and prove that it is a complete obstruction to the $\gamma$-turnability of a bundle. We also introduce variants $\overline{\To}_\gamma \colon [X, SO_{2k}] \to [X, \Omega_0 SO_{2k}]$ and $\TO_\gamma, \ol{\TO}_\gamma \colon [X, SO_{2k}] \to [SX, SO_{2k}]$ and show that they are equivalent to $\To_{\gamma}$. Finally we prove that these maps are homomorphisms if $X$ is a suspension.

In Section~\ref{ss:gen} we consider bundles with a path-connected structure group $G$. Given a path $\gamma$ between elements of the centre $Z(G)$, we define a generalised turning obstruction map $\To_{\gamma} : [X, G] \rightarrow [X, \Omega_\gamma G]$. If we fix an element $[g] \in [X, G]$, then we can regard $\To_{\gamma}([g])$ as a function of $\gamma$ and we show that it is compatible with concatenation of paths. We also consider a normalised version of the turning obstruction, $\overline{\To}_\gamma \colon [X, G] \to [X, \Omega_0 G]$. This allows us to compare turning obstructions for different paths and we find that $\overline{\To}_{\gamma} = \overline{\To}_{a\gamma}$ for any $a \in Z(G)$ (where $(a\gamma)(t)=a\gamma(t)$). When $Z(G)$ is discrete, we introduce the quotient $PG = G/Z(G)$ and use these observations to show that the turning obstructions are determined by a map $\widehat{\To}_{\cdot}(\cdot) \colon \pi_1(PG) \times [X, G] \rightarrow [X, \Omega_0 G]$, which is a homomorphism in the first variable (and also in the second one, if $X$ is a suspension). As an application we prove Theorem \ref{t:to-ord24}.

\subsection{The turning obstruction for vector bundles} \label{ss:tovb}

Let $X$ be a CW-complex and let $C_0X$ and $C_1X$ be two copies of the cone on $X$, so that 
$SX = C_0X \cup_X C_1X$. 
By~\cite[Ch.\ 8, Theorem 8.2]{h94}, the set of isomorphism classes of oriented rank-$2k$ bundles over $SX$ 
is in bijection with $[X, SO_{2k}]$, the set of homotopy classes of maps from $X$ to $SO_{2k}$.
A bundle $E$ corresponds to its {\em clutching function} $g \colon X \rightarrow SO_{2k}$ 
between two local trivialisations $\varphi_i : C_iX \times \R^{2k} \rightarrow E \big| _{C_iX}$, 
defined by $\varphi_0^{-1}\big|_X \circ \varphi_1 \big|_X(x, v) = (x, g(x)v)$.

\begin{definition}[The $\gamma$-turning obstruction] \label{d:to}
Let $\gamma \in \Omegapmone$.
\begin{compactenum}[a)]
\item We define the map $\rho_{\gamma} : SO_{2k} \rightarrow \Omega_\gamma SO_{2k}$ by $\rho_{\gamma}(A) = (t \mapsto A\gamma(t)A^{-1})$.
\item For any CW-complex $X$ the {\em $\gamma$-turning obstruction map} is 
$\To_{\gamma} := (\rho_{\gamma})_* : [X, SO_{2k}] \rightarrow [X, \Omega_\gamma SO_{2k}]$.
\end{compactenum}
\end{definition}

\noindent
Let $0 \in [X, \Omega_\gamma SO_{2k}]$ denote the homotopy class of the constant map.
Definition~\ref{d:to} is justified by the following

\begin{proposition} \label{p:to}
Let $E$ be an oriented rank-$2k$ bundle over $SX$ with clutching function $g \colon X \to SO_{2k}$.
 
Then $E$ is $\gamma$-turnable if and only if $\To_{\gamma}([g]) = 0 \in [X, \Omega_\gamma SO_{2k}]$. 
\end{proposition}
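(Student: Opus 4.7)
The plan is to reformulate the existence of a $\gamma$-turning of $E$ as a gluing problem over the two cones comprising $SX$, and then to use the contractibility of the cones to identify the obstruction with the homotopy class $[\rho_\gamma \circ g] \in [X, \Omega_\gamma SO_{2k}]$.

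By Remark \ref{r:assoc-h}, a $\gamma$-turning of $E$ is equivalent to a section of the associated bundle $\Turn^\gamma(E) \to SX$. First I would trivialize $\Turn^\gamma(E)$ over each cone $C_iX$ using the trivializations $\varphi_i$ of $E$, so that a section is encoded by a pair of maps $s_0 \colon C_0X \to \Omega_\gamma SO_{2k}$ and $s_1 \colon C_1X \to \Omega_\gamma SO_{2k}$. Chasing through the clutching convention $\varphi_1(x, v) = \varphi_0(x, g(x) v)$, I expect the compatibility condition on the overlap $X$ to read
\[ s_0(x) = g(x) \, s_1(x) \, g(x)^{-1}, \]
interpreted as pointwise conjugation of paths in $SO_{2k}$.

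For the ``if'' direction, given $\To_\gamma([g]) = [\rho_\gamma \circ g] = 0$, a null-homotopy of $\rho_\gamma \circ g$ to the constant map at $\gamma$ factors through $C_0 X$ to give a map $s_0 \colon C_0 X \to \Omega_\gamma SO_{2k}$ with $s_0|_X = \rho_\gamma \circ g$; taking $s_1 \equiv \gamma$, the compatibility condition holds on the nose since $g(x)\gamma g(x)^{-1} = \rho_\gamma(g(x))$, and the pair assembles to the desired section. For the converse, starting from a $\gamma$-turning $(s_0, s_1)$, both $s_i|_X$ are null-homotopic since each extends over a contractible cone. As $\Omega_\gamma SO_{2k}$ is path-connected by definition, I would choose a homotopy $h \colon X \times I \to \Omega_\gamma SO_{2k}$ from $s_1|_X$ to the constant map $\gamma$; conjugating pointwise produces a homotopy $(x, u) \mapsto g(x) h(x, u) g(x)^{-1}$ from $s_0|_X$ to $\rho_\gamma \circ g$, and since $s_0|_X$ is null-homotopic, so is $\rho_\gamma \circ g$, i.e.\ $\To_\gamma([g]) = 0$.

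I expect the main difficulty to be purely bookkeeping: carefully matching the clutching conventions for $E$ with those of the associated bundle $\Turn^\gamma(E)$ so that the obstruction emerges as $\rho_\gamma \circ g$ and not as a variant like $\rho_\gamma \circ g^{-1}$ or with the conjugation on the opposite side. Once the conventions are pinned down, both directions reduce to the standard obstruction-theoretic analysis for sections of a fibration over a suspension, using only the contractibility of each cone and the path-connectedness of the fibre component $\Omega_\gamma SO_{2k}$.
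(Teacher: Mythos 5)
Your proof is correct and takes essentially the same approach as the paper's: both reduce $\gamma$-turnability to the existence of a section of $\Turn^\gamma(E)$, trivialize over the two cones of $SX$, and observe (with the same sign/side conventions for the clutching action) that the gluing condition over $X$ amounts to the null-homotopy of $\rho_\gamma \circ g$. The paper phrases it slightly more compactly by noting that each cone carries a unique section up to homotopy and comparing their restrictions, while you construct the section and the intermediate homotopies explicitly, but the content is the same.
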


\begin{proof}
Recall that $\gamma$-turnings of $E$ can be identified with sections of the associated $\gamma$-turning bundle $\Turn^{\gamma}(E) = \Fr(E) \times _{SO_{2k}} \Omega_{\gamma} SO_{2k}$ (see Lemma \ref{l:TurnE} and Remark \ref{r:assoc-h}). The local trivialisations $\varphi_i$ of $E$ induce local trivialisations $\bar{\varphi}_i : C_iX \times \Omega_{\gamma} SO_{2k} \rightarrow \Turn^{\gamma}(E) \big| _{C_iX}$ of $\Turn^{\gamma}(E)$. 
By construction, the clutching function $g \colon X \to SO_{2k}$ is also the clutching function of 
$\Turn^{\gamma}(E)$ (recall that $SO_{2k}$ acts on 
$\Omega_{\gamma} SO_{2k}$ by pointwise conjugation).

Since $C_iX$ is contractible and $\Omega_{\gamma} SO_{2k}$ is connected, each restriction $\Turn^{\gamma}(E) \big| _{C_iX}$ has a unique section $s_i : C_iX \rightarrow \Turn^{\gamma}(E) \big| _{C_iX}$ up to homotopy, 
given by $s_i(y) = \bar{\varphi}_i(y,\gamma)$. 
Hence a global section of $\Turn^{\gamma}(E)$ exists if and only if $s_0 \big| _X$ and $s_1 \big| _X$ are homotopic sections. For $x \in X$ we have 
\[ s_0(x) = \bar{\varphi}_0(x,\gamma)
\quad \text{and} \quad
s_1(x) = \bar{\varphi}_1(x,\gamma) = \bar{\varphi}_0 \circ \bar{\varphi}_0^{-1} \circ \bar{\varphi}_1(x,\gamma) = \bar{\varphi}_0(x,\gamma^{g(x)}) = \bar{\varphi}_0(x,\rho_\gamma(g(x))),\]
where $\gamma^{g(x)}$ denotes the action of $g(x) \in SO_{2k}$ on $\gamma \in \Omega_{\gamma} SO_{2k}$. These sections are homotopic if and only if 
$\rho_{\gamma} \circ g : X \rightarrow \Omega_{\gamma} SO_{2k}$ 
is homotopic to the constant map with value $\gamma$, i.e.\ if and only if $\To_{\gamma}([g]) = 0$.
\end{proof}

The turning obstruction is a map of pointed sets, but $[X, SO_{2k}]$ is a group and we will show that $[X, \Omega_\gamma SO_{2k}]$ can also be equipped with a group structure and that $\To_\gamma$ and related maps are often group homomorphisms; see Lemma \ref{l:to+bto}.

Let $\Omega_0 SO_{2k} \subset \Omega SO_{2k}$ denote the component of contractible loops and define the homeomorphism
\[ p_\gamma \colon \Omega_\gamma SO_{2k} \to \Omega_0 SO_{2k},
\quad \delta \mapsto (t \mapsto \delta(t)\gamma(t)^{-1}),\]
as well as the commutator map $\bar \rho_\gamma := p_\gamma \circ \rho_\gamma$,
\[ \bar \rho_\gamma \colon SO_{2k} \to \Omega_0SO_{2k},
\quad A \mapsto (t \mapsto A\gamma(t)A^{-1}\gamma(t)^{-1}).\]

\begin{definition}[Normalised $\gamma$-turning obstruction]
Let $\gamma \in \Omegapmone$. For any $CW$-complex $X$, the {\em normalised $\gamma$-turning
obstruction map} is $\overline{\To}_\gamma: = (\bar \rho_\gamma)_* \colon [X, SO_{2k}] \to [X, \Omega_0 SO_{2k}]$.
\end{definition}

Since $p_\gamma$ is a homeomorphism, the induced map $(p_{\gamma})_* \colon [X, \Omega_{\gamma} SO_{2k}] \to [X, \Omega_0 SO_{2k}], [h] \mapsto [p_\gamma \circ h]$ is a bijection which preserves $0$ and hence 
an oriented rank-$2k$ bundle $E \to SX$ with clutching function $g \colon X \to SO_{2k}$ is 
$\gamma$-turnable if and only if $\ol{\To}_\gamma([g]) = 0$.
For computing $\To_\gamma$ and $\ol{\To}_\gamma$ it is useful to consider their adjointed versions, which we will define below. 

\begin{definition}[Forgetful adjoints]
Let $h \colon X \to \Omega_\gamma SO_{2k}$ and $h' \colon X \to \Omega_0 SO_{2k}$ be continuous maps. By taking their adjoints
\[ \ad(h) \colon SX \to SO_{2k},
\quad [x, t] \mapsto h(x)(t) 
\qquad \text{and} \qquad
\ad(h') \colon SX \to SO_{2k},
\quad [x, t] \mapsto h'(x)(t)\]
we define the {\em forgetful adjoint maps} $\ad : [X, \Omega_{\gamma} SO_{2k}] \rightarrow [SX, SO_{2k}]$ and $\ad : [X, \Omega_0 SO_{2k}] \rightarrow [SX, SO_{2k}]$. (We call these maps ``forgetful'' because $[\ad(h)]$ and $[\ad(h')]$ are regarded as elements of $[SX, SO_{2k}]$ rather than of the more restricted sets of homotopy classes on which the inverse adjoint maps $[\ad(h)] \mapsto [h]$ and $[\ad(h')] \mapsto [h']$ are naturally defined.)
\end{definition}

If $X$ is connected, then the forgetful adjoint maps are bijections and they preserve $0$, the homotopy class of the constant map.

\begin{definition}[Adjointed $\gamma$-turning obstructions] \label{def:adj-to}
Define $\TO_\gamma : = \ad \circ \To_\gamma \colon [X, SO_{2k}] \to [SX, SO_{2k}]$
and $\ol{\TO}_\gamma : = \ad \circ \ol{\To}_\gamma \colon [X, SO_{2k}] \to [SX, SO_{2k}]$.
\end{definition}

\begin{lemma} \label{l:to+bto}
\begin{compactenum}[a)]
\item Let $X$ be a $CW$-complex. Then $\TO_\gamma = \ol{\TO}_\gamma : [X, SO_{2k}] \to [SX, SO_{2k}]$.
\item If $X$ is a suspension, then $\To_\gamma, \ol{\To}_\gamma, \TO_\gamma$ and $\ol{\TO}_\gamma$ are each homomorphisms of abelian groups.
\end{compactenum}
\end{lemma}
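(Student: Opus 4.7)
The plan is to establish part (a) by a direct null-homotopy argument, and then to deduce part (b) by combining Eckmann--Hilton with the observation that both the normalised turning obstruction and the adjoint are induced by pointed maps.

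For part (a) I would compare the two maps pointwise. Writing out the definitions gives
\[
\TO_\gamma([g])([x,t]) = g(x)\gamma(t)g(x)^{-1}
\quad\text{and}\quad
\ol{\TO}_\gamma([g])([x,t]) = g(x)\gamma(t)g(x)^{-1}\gamma(t)^{-1},
\]
so these two maps $SX \to SO_{2k}$ differ by pointwise right-multiplication (in $SO_{2k}$) by the map $\tilde\gamma \colon SX \to SO_{2k}$, $[x,t] \mapsto \gamma(t)$. This $\tilde\gamma$ is well defined and continuous because $\gamma(0)=\idbb$ and $\gamma(1)=-\idbb$ are independent of $x$, and it factors as the projection $p \colon SX \to I$, $[x,t] \mapsto t$, followed by $\gamma$. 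Since $I$ is contractible, $p$ is freely null-homotopic to the constant map at $0$; composing the null-homotopy with $\gamma$ yields a free homotopy from $\tilde\gamma$ to the constant map at $\gamma(0)=\idbb$. Hence $\tilde\gamma$ is trivial in $[SX, SO_{2k}]$, and $\TO_\gamma([g]) = \ol{\TO}_\gamma([g])$.

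For part (b), assume $X = SY$. The map $\bar\rho_\gamma \colon (SO_{2k}, \idbb) \to (\Omega_0 SO_{2k}, \mathrm{const}_\idbb)$ is pointed, since $\bar\rho_\gamma(\idbb)(t) = \gamma(t)\gamma(t)^{-1} = \idbb$. The cogroup structure on $SY$ therefore turns $\ol{\To}_\gamma = (\bar\rho_\gamma)_*$ into a homomorphism of pointed homotopy classes. By Eckmann--Hilton, this suspension group structure on $[SY, Z]$ coincides with the pointwise product whenever $Z$ is a topological group or H-space (such as $SO_{2k}$ or $\Omega_0 SO_{2k}$), and the resulting group is abelian. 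The same argument, with $\gamma$ chosen as basepoint of $\Omega_\gamma SO_{2k}$, shows that $\rho_\gamma$ is pointed and therefore that $\To_\gamma$ is a homomorphism, once we give $[X, \Omega_\gamma SO_{2k}]$ the group structure transported across the homeomorphism $p_\gamma$. The forgetful adjoint $\ad$ is the standard suspension--loop adjunction bijection (made unbased via path-connectedness of the relevant targets), and the direct check $\ad(h_1 \cdot h_2)([y,t]) = h_1(y)(t)\cdot h_2(y)(t) = (\ad(h_1)\cdot \ad(h_2))([y,t])$ shows it is a homomorphism for the pointwise products. Hence $\ol{\TO}_\gamma = \ad \circ \ol{\To}_\gamma$ is a homomorphism, and by part (a) $\TO_\gamma = \ol{\TO}_\gamma$ is one as well.

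The main obstacle I anticipate is the book-keeping for the several abelian group structures on $[SY, \Omega_\gamma SO_{2k}]$ and $[S(SY), SO_{2k}]$ coming from different sources (the $SY$ cogroup, loop concatenation, pointwise product in $SO_{2k}$, and the adjunction); the cleanest route is to work normalised throughout, where $\bar\rho_\gamma$ is genuinely pointed, and to use $p_\gamma$ as the bridge back to $\Omega_\gamma SO_{2k}$.
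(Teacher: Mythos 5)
Your argument is correct and follows essentially the same route as the paper: part (a) is the same null-homotopy of the extra $\gamma(t)^{-1}$ factor (the paper writes it explicitly as $H([x,t],s) = g(x)\gamma(t)g(x)^{-1}\gamma(st)^{-1}$, which is exactly your ``slide $t$ to $0$ through the contractible $I$'' homotopy in reverse), and part (b) uses the same combination of pointedness of $\bar\rho_\gamma$ and $\rho_\gamma$, naturality of the co-H group structure, Eckmann--Hilton, and transport across $p_\gamma$. The only small stylistic difference is that for the adjoint step the paper compares loop concatenation on $\Omega_0 SO_{2k}$ with the suspension coproduct on $SX$, whereas you check the pointwise product directly; both are the same structure after Eckmann--Hilton.
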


\begin{proof}
a) We show that the diagram
\[ 
\xymatrix@R=15pt@C=20pt{
 & [X, \Omega_\gamma SO_{2k}] \ar[dd]^{(p_{\gamma})_*} \ar[r]^{\ad} & [SX, SO_{2k}] \ar@{=}[dd] \\
[X, SO_{2k}] \ar[ur]^{\To_\gamma} \ar[dr]_{\ol{\To}_\gamma} & & \\
 & [X, \Omega_0 SO_{2k}] \ar[r]^{\ad} & [SX, SO_{2k}]
}
\]
commutes.
The left hand triangle commutes be definition.
For the right hand square,
consider the path of paths $s \mapsto \gamma_s$, where $\gamma_s : I \rightarrow SO_{2k}$ is defined by $\gamma_s(t) = \gamma(st)$ for $s, t \in I$.
Then the map
\[ H \colon SX \times I \to SO_{2k}, 
\quad
([x, t], s) \mapsto g(x) \gamma(t) g(x)^{-1} \gamma_s(t)^{-1}
\]
is a homotopy from $\ad(\rho_\gamma \circ g)$ to $\ad(\bar \rho_\gamma \circ g)$,
which proves that the square commutes.

b) First note that $\Omega_0 SO_{2k}$ is a topological group (via pointwise multiplication of loops), so we can use the homeomorphism $p_\gamma$ to get a topological group structure on $\Omega_\gamma SO_{2k}$. Hence for $H = SO_{2k}$, $\Omega_0 SO_{2k}$ or $\Omega_\gamma SO_{2k}$ and any space $Y$ the set $[Y, H]$ inherits a group structure from $H$ (and with these $(p_{\gamma})_*$ is automatically an isomorphism). 

If $Y$ is pointed, then the set $[Y, H]_*$ of homotopy classes of basepoint-preserving maps is also a group and the forgetful map $[Y, H]_* \to [Y, H]$ is an isomorphism. If $Y$ is a suspension, then for any space $Z$ the set $[Y,Z]_*$ has a group structure coming from the co-H-space structure on $Y$ and on the sets $[Y, H]_*$ the two group structures coincide and they are abelian. Since the group structure on $[Y,Z]_*$ is natural in $Z$, it follows that if $X$ is a suspension, then the maps $\To_\gamma$ and $\ol{\To}_\gamma$ (which are induced by maps of spaces) are homomorphisms (and all groups involved are abelian).

Finally, loop concatenation gives $\Omega_0 SO_{2k}$ an H-space structure and the induced group structure on $[X, \Omega_0 SO_{2k}]$ coincides with the previously defined one. By comparing this with the group structure on $[SX, SO_{2k}]$ coming from the suspension $SX$, we obtain that the forgetful adjoint maps and hence $\TO_\gamma$ and $\ol{\TO}_\gamma$, are also homomorphisms.
\end{proof}

\begin{remark}
By Lemma~\ref{l:to+bto} b), if $X = SY$ is a suspension, then the set of isomorphism classes of $\gamma$-turnable bundles over $SX = S^2Y$ can be identified with a subgroup of $[X, SO_{2k}]$.
\end{remark}

\begin{question}
\begin{compactenum}[a)]
\item The sets $[X, SO_{2k}]$ and $[SX, SO_{2k}]$ have natural group structures even when $X$ is not a suspension (or co-H-space). Is $\TO_\gamma$ a group homomorphism for an arbitrary $X$? 
\item Isomorphism classes of rank-$2k$ oriented bundles over a space $B$ are in bijection with $[B,BSO_{2k}]$, so when $B$ is a suspension, $\TO_\gamma$ can be regarded as a map $[B,BSO_{2k}] \rightarrow [B,SO_{2k}]$. Is there a similar $\gamma$-turning obstruction map for bundles over an arbitrary space $B$?
\end{compactenum}
\end{question}

We next briefly discuss the behaviour of the turning obstruction 
under stabilisation: we will return to this topic in greater detail in Section~\ref{s:stable}.
Let $i \colon SO_{2k} \to SO_{2k+2}$ denote the standard inclusion
and let $S = i_* \colon [X, SO_{2k}] \to [X, SO_{2k+2}]$ denote the stabilisation
map induced by $i$.
Given a path $\gamma_0 \in \Omega_{\pm \idbb} SO_{2}$, taking the orthogonal sum with $\gamma_0$ defines a map $i_{\gamma_0} \colon \Omega_\gamma SO_{2k} \to \Omega_{\gamma \oplus \gamma_0} SO_{2k+2}$.
It is clear from the definitions that the turning obstructions satisfy 
$\To_{\gamma \oplus \gamma_0}([i \circ g]) = i_{\gamma_0 *}(\To_\gamma([g]))$
and indeed we have

\begin{lemma} \label{l:to+stab}
Let $X$ be a $CW$-complex and $g \colon X \to SO_{2k}$ a map.
Then the adjointed $\gamma$-turning obstruction satisfies 
$\TO_{\gamma \oplus \gamma_0}([i \circ g]) = S\big( \TO_\gamma([g]) \bigr)$. 
In particular, $\TO_\beta([i \circ g]) = \TO_{\bar \beta}([i \circ g])$.
\qed
\end{lemma}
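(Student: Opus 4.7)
The plan is to construct an explicit homotopy between representatives of the two sides. First I would write the left side $\TO_{\gamma \oplus \gamma_0}([i \circ g])$ as the class of
\[ f_1 \colon SX \to SO_{2k+2}, \quad [x,t] \mapsto \bigl(g(x)\gamma(t)g(x)^{-1}\bigr) \oplus \gamma_0(t), \]
and the right side $S(\TO_\gamma([g]))$ as the class of $f_2 \colon [x,t] \mapsto \bigl(g(x)\gamma(t)g(x)^{-1}\bigr) \oplus \idbb_2$. The two maps agree in the first $2k$ coordinates, so the comparison reduces to interpolating the path $\gamma_0$ to the constant path $\idbb_2$ in the last $SO_2$ factor.

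The candidate interpolation is
\[ H \colon SX \times I \to SO_{2k+2}, \qquad H\bigl([x,t], s\bigr) = \bigl(g(x)\gamma(t)g(x)^{-1}\bigr) \oplus \gamma_0\bigl((1-s)t\bigr), \]
which restricts to $f_1$ at $s=0$ and $f_2$ at $s=1$. The main check is that $H$ descends through the collapses $X \times \{0\} \sim \ast_0$ and $X \times \{1\} \sim \ast_1$ for every $s$. At $t=0$ both factors equal $\idbb$. At $t=1$ the first factor equals $-\idbb_{2k}$ by centrality of $-\idbb$ in $SO_{2k}$, and the second factor $\gamma_0(1-s)$ does not depend on $x$; the homotopy therefore traces a non-constant curve over the cone point $\ast_1$, which is permitted because $[SX, SO_{2k+2}]$ consists of unbased homotopy classes. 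This yields the main equality.

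For the second assertion I would apply the main equality twice within $SO_{2k+2}$, with $\gamma = \beta$ of $SO_{2k}$ and $\gamma_0$ equal to either the standard turning $\beta_2$ or its opposite $\bar\beta_2$ of $SO_2$. The formula $\beta(t) \oplus \beta_2(t) = e^{i\pi t}\idbb$ shows $\beta \oplus \beta_2 \simeq \beta$ in $SO_{2k+2}$, while $\beta \oplus \bar\beta_2$ differs from $\beta$ in $SO_{2k+2}$ by the loop $\idbb_{2k} \oplus (t \mapsto e^{-2\pi i t})$ in the $SO_2$-summand, which represents the nontrivial element of $\pi_1(SO_{2k+2}) \cong \Z/2$; hence $\beta \oplus \bar\beta_2 \simeq \bar\beta$ in $SO_{2k+2}$. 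Both $\TO_\beta([i \circ g])$ and $\TO_{\bar\beta}([i \circ g])$ therefore equal $S(\TO_\beta([g]))$.

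The only delicate point I anticipate is the well-definedness of $H$ at the cone point $\ast_1$: the value depends on $s$, so it is essential that $[SX, SO_{2k+2}]$ be unbased, otherwise $f_1$ and $f_2$ would have to agree at $\ast_1$ on the nose, which they do not. Once this is acknowledged the remaining verifications are direct.
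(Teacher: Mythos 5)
Your proof is correct and essentially matches the paper's. The interpolating homotopy $H([x,t],s) = \bigl(g(x)\gamma(t)g(x)^{-1}\bigr) \oplus \gamma_0\bigl((1-s)t\bigr)$ is exactly the one the paper uses, and your observation that well-definedness over the cone point $\ast_1$ relies on centrality of $-\idbb$ and on $[SX,SO_{2k+2}]$ being unbased is the right thing to check. The only cosmetic difference is in the final step: the paper deduces $\TO_\beta([i\circ g]) = \TO_{\bar\beta}([i\circ g])$ abstractly from the surjectivity of $\pi_0(\Omega_{\pm\idbb}SO_2) \to \pi_0(\Omega_{\pm\idbb}SO_{2k+2})$, $[\gamma_0] \mapsto [\gamma\oplus\gamma_0]$, whereas you exhibit explicit choices $\gamma_0 = \beta_2,\bar\beta_2$ realising both homotopy classes of turnings of $\R^{2k+2}$; this is the same argument unpacked.
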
 

\begin{proof}
Write 
$A \oplus B \in SO_{2k+2}$ for the block sum of matrices $A \in SO_{2k}$ and $B \in SO_2$
and consider the path of paths $s \mapsto (\gamma_0)_s$, where $(\gamma_0)_s : I \rightarrow SO_2$ is defined by $(\gamma_0)_s(t) = \gamma_0(st)$ for $s, t \in I$.
Then
\[ H \colon SX \times I \to SO_{2k+2}, 
\quad
([x, t], s) \mapsto g(x) \gamma(t) g(x)^{-1} \oplus (\gamma_0)_{(1-s)}(t)
\]
is a homotopy from $\ad(\rho_{\gamma \oplus \gamma_0} \circ (i \circ g))$ to $i \circ \ad(\rho_\gamma(g))$,
which proves the first statement of the lemma. In particular, $\TO_{\gamma \oplus \gamma_0}([i \circ g])$ is independent of the choice of $\gamma_0 \in \Omega_{\pm \idbb} SO_2$. Since the map $\pi_0(\Omega_{\pm \idbb} SO_2) \rightarrow \pi_0(\Omega_{\pm \idbb} SO_{2k+2})$, $[\gamma_0] \mapsto [\gamma \oplus \gamma_0]$ is surjective (for any $\gamma \in \Omegapmone$), the second statement follows.
\end{proof}

We conclude this subsection with a remark on a related point of view on the turning obstruction.

\begin{remark}
Consider the associated $\gamma$-turning bundle of the universal bundle $VSO_{2k}$ 
(see Definition \ref{d:univ-turn} and Remark \ref{r:assoc-h}) and the Puppe sequence
\[
\ldots \to \Omega BT^{\gamma}_{2k} \to \Omega 
BSO_{2k} \to \Omega_{\gamma} SO_{2k} \to BT^{\gamma}_{2k} \to BSO_{2k},
\]
where $\Omega$ denotes the based loops functor.
After applying the functor $[X,-]_*$ to this sequence 
and the adjunction $[X, \Omega Y]_* \cong [\Sigma X,Y]_*$, where $\Sigma X$ denotes the reduced suspension,
we obtain an exact sequence
\[
\ldots \to [\Sigma X, BT^{\gamma}_{2k}]_* \to [\Sigma X, BSO_{2k}]_* \xrightarrow{~\partial~} 
[X, \Omega_{\gamma} SO_{2k}]_* \to [X, BT^{\gamma}_{2k}]_* \to [X, BSO_{2k}]_*.
\]
The arguments in the proof of Proposition \ref{p:to} also show that the $\gamma$-turning obstruction can be identified with the boundary map $\partial$ (note that $[\Sigma X, BSO_{2k}]_* \cong [X, SO_{2k}]_* \cong [X, SO_{2k}]$ and $[X, \Omega_\gamma SO_{2k}]_* \cong [X, \Omega_\gamma SO_{2k}]$).
So by using the exactness of the sequence we obtain an alternative proof of Proposition \ref{p:tble-lift} for bundles over suspensions.
\end{remark}

\subsection{General turning obstructions} \label{ss:gen}
In this subsection we define the turning obstruction for the general turnings of principal bundles,
as in Section~\ref{ss:gt}.
This will help us establish some basic properties of the turning obstruction for vector bundles. 

\begin{definition}
Let $G$ be a path-connected topological group, $a,b \in G$ arbitrary elements and $\gamma : I \rightarrow G$ a path in $G$. We introduce the following notation: 
\begin{compactitem}
\item $\Omega G$ denotes the space of loops in $G$ based at the identity element;
\item $\Omega_0 G \subseteq \Omega G$ is the space of nullhomotopic loops (i.e.\ the connected component of the constant loop);
\item $\Omega_{a,b} G$ is the space of paths in $G$ from $a$ to $b$;
\item $\Omega_{\gamma} G$ is the space of paths homotopic (rel $\partial I$) to $\gamma$ (i.e.\ the connected component of $\gamma$ in $\Omega_{\gamma(0),\gamma(1)} G$);
\item $\Omega_Z G = \bigcup_{a,b \in Z(G)} \Omega_{a,b} G$, where $Z(G)$ denotes the centre of $G$.
\end{compactitem}
\end{definition}

\begin{definition}[Parametrised central groupoid of $G$]
Let $G$ be a path-connected topological group.
The {\em parametrised central groupoid of $G$} has
 objects the elements of $Z(G)$. 
 The set of all morphisms is  $[G, \Omega_Z G]$.
The set of morphisms from $a$ to $b$ is $[G, \Omega_{a,b} G]$.
Given objects $a,b,c \in Z(G)$ and maps $f_1 : G \rightarrow \Omega_{a,b} G$ and $f_2 : G \rightarrow \Omega_{b,c} G$, the composition $[f_2] \circ [f_1]$ is represented by $f_1 \ast f_2 :  G \rightarrow \Omega_{a,c} G$, defined by $(f_1 \ast f_2)(x) = f_1(x) \ast f_2(x)$, where $\ast$ denotes concatenation of paths.
\end{definition}

Let $G$ be a path-connected topological group and
recall that for a CW-complex $X$, isomorphism classes of principal $G$-bundles over $SX$ with structure group $G$ are in bijection with $[X, G]$. 

\begin{definition}[General $\gamma$-turning obstruction]  \label{d:gto}
Let $\gamma \in \Omega_Z G$. 
\begin{compactenum}[a)]
\item We define the map $\rho_{\gamma} : G \rightarrow \Omega_{\gamma} G$ by $\rho_{\gamma}(x) = (t \mapsto x\gamma(t)x^{-1})$.
\item For any CW-complex $X$ the {\em $\gamma$-turning obstruction map} is 
$\To_{\gamma} := (\rho_{\gamma})_* : [X, G] \rightarrow [X, \Omega_{\gamma} G]$.
\end{compactenum}
\end{definition}

Note that the image of $\rho_{\gamma}$ is contained in $\Omega_{\gamma(0),\gamma(1)} G$, because $\gamma(0),\gamma(1) \in Z(G)$ and in particular in the component $\Omega_{\gamma} G$, because $G$ is path-connected and $\rho_{\gamma}$ sends the identity element to $\gamma$. 
We let $0 \in [X, \Omega_\gamma G]$ denote the homotopy class of the constant map.
The proof of the following proposition is entirely analogous to the proof of Proposition \ref{p:to}.

\begin{proposition} \label{p:togamma}
Let $P$ be a $G$-bundle over $SX$ with clutching function $g \colon X \to G$. 
Then $P$ is $\gamma$-turnable if and only if $\To_{\gamma}([g]) = 0 \in [X, \Omega_{\gamma} G]$. 
\qed
\end{proposition}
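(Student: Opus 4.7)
The plan is to mirror the proof of Proposition \ref{p:to} in this more general setting by constructing an associated $\gamma$-turning bundle for a principal $G$-bundle and identifying its sections with $\gamma$-turnings.

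First, I would define the \emph{associated $\gamma$-turning bundle} of a principal $G$-bundle $\pi \colon P \to B$ as $\Turn^{\gamma}(P) := P \times_G \Omega_{\gamma} G$, where $G$ acts on $\Omega_{\gamma} G$ by pointwise conjugation. Before doing so, one must check that this action preserves the target: since $\gamma(0), \gamma(1) \in Z(G)$, the endpoints of every path in $\Omega_{\gamma}G$ are fixed under conjugation, and because $G$ is path-connected, conjugation by any element of $G$ is homotopic to the identity, so conjugation preserves the path component $\Omega_{\gamma} G$ inside $\Omega_{\gamma(0),\gamma(1)}G$.

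Next, I would identify $\gamma$-turnings of $P$ (in the sense of Section~\ref{ss:gt}) with sections of $\Turn^{\gamma}(P)$. Given a $\gamma$-turning $\phi_t \colon I \to \GG_P$, the assignment $b \mapsto [p, r_p(\phi_t)]$ for any $p \in P_b$ gives a well-defined section by the identity $r_{p \cdot h}(\phi_t) = h^{-1} r_p(\phi_t) h$; conversely, a section of $\Turn^{\gamma}(P)$ reconstructs a $\gamma$-turning by using that the fibre over $b$ consists precisely of paths between the central elements $\gamma(0)_{P_b}$ and $\gamma(1)_{P_b}$ in the appropriate homotopy class.

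Third, I would run the cone decomposition. Write $SX = C_0 X \cup_X C_1 X$. Local trivialisations $\varphi_i \colon C_i X \times G \to P|_{C_i X}$ with clutching function $g$ induce local trivialisations $\bar{\varphi}_i \colon C_i X \times \Omega_{\gamma} G \to \Turn^{\gamma}(P)|_{C_i X}$ with the same clutching function (for the pointwise conjugation action). Since $C_i X$ is contractible and $\Omega_{\gamma} G$ is path-connected, each restriction $\Turn^{\gamma}(P)|_{C_i X}$ has a section $s_i(y) = \bar{\varphi}_i(y, \gamma)$, unique up to homotopy, so a global section exists if and only if $s_0|_X$ and $s_1|_X$ are homotopic. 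For $x \in X$ we have $s_1(x) = \bar{\varphi}_0(x, \gamma^{g(x)}) = \bar{\varphi}_0(x, \rho_{\gamma}(g(x)))$, so $s_0|_X \simeq s_1|_X$ if and only if $\rho_{\gamma} \circ g \colon X \to \Omega_{\gamma} G$ is homotopic to the constant map at $\gamma$, i.e.\ if and only if $\To_{\gamma}([g]) = 0$.

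The main obstacle, though a mild one, is the verification that pointwise conjugation genuinely acts on the connected component $\Omega_{\gamma} G$; the rest is a direct transcription of the vector bundle argument, with the gauge-theoretic description of $\gamma$-turnings from Section~\ref{ss:gt} replacing the concrete fibrewise description used for vector bundles.
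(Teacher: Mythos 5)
Your proposal is correct and matches the paper's intent exactly: the paper states that the proof is ``entirely analogous to the proof of Proposition~\ref{p:to}'' and leaves it at that, while you have supplied precisely the missing bookkeeping — defining $\Turn^{\gamma}(P) := P \times_G \Omega_{\gamma}G$, verifying that pointwise conjugation preserves the component $\Omega_{\gamma}G$ (using that $\gamma(0),\gamma(1)\in Z(G)$ and $G$ is path-connected), identifying $\gamma$-turnings of $P$ with sections via $r_p$, and then replaying the cone decomposition argument verbatim. There is no gap; this is a faithful transcription of Proposition~\ref{p:to} to the principal bundle setting.
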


\begin{definition}
Let $\To_G : \pi^Z(G) \rightarrow [G, \Omega_Z G]$ be defined by $\To_G([\gamma]) = [\rho_{\gamma}]$.
\end{definition}

This map is well-defined, because a path homotopy between $\gamma$ and $\gamma'$ determines a homotopy between the maps $\rho_{\gamma}$ and $\rho_{\gamma'}$. Since composition is defined in terms of concatenation both in $\pi^Z(G)$ and $[G, \Omega_Z G]$, it is a map of groupoids (with the identity map on the objects). With this notation $\To_{\gamma} = \To_G([\gamma])_* : [X, G] \rightarrow [X, \Omega_{\gamma} G] \subseteq [X, \Omega_Z G]$.

\begin{definition}
Let $PG = G / Z(G)$. Let $p_G : \pi^Z(G) \rightarrow \pi_1(PG)$ denote the map of groupoids induced by the projection $G \rightarrow PG$ (where $\pi_1(PG)$ is regarded as a groupoid on one object). 
\end{definition}

Every $\gamma \in \Omega_Z G$ determines a homeomorphism $p'_{\gamma} : \Omega_{\gamma} G \rightarrow \Omega_0 G$ which sends a path $\delta$ to the loop $t \mapsto \delta(t)\gamma(t)^{-1}$. If $[\gamma] = [\gamma']$, so that $\Omega_{\gamma} G = \Omega_{\gamma'} G$, then these homeomorphisms are homotopic, hence they induce a well-defined map $p'_{[\gamma]} : [G, \Omega_{\gamma} G] \rightarrow [G, \Omega_0 G]$. On the other hand, if $[\gamma] \neq [\gamma']$, then $\Omega_{\gamma} G$ and $\Omega_{\gamma'} G$ (and hence $[G, \Omega_{\gamma} G]$ and $[G, \Omega_{\gamma'} G]$) are disjoint, so $p'_G$ below is well-defined: 

\begin{definition}
Let $p'_G : [G, \Omega_Z G] \rightarrow [G, \Omega_0 G]$ denote the union of the maps $p'_{[\gamma]} : [G, \Omega_{\gamma} G] \rightarrow [G, \Omega_0 G]$.
\end{definition}

Since $\Omega_0 G$ is an H-space, $[G, \Omega_0 G]$ is a group, i.e.\ a groupoid on one object. For every pair $\gamma, \gamma'$ of composable paths in $\Omega_Z G$ the diagram
\[
\xymatrix{
\Omega_{\gamma} G \times \Omega_{\gamma'} G \ar[d]_-{p'_{\gamma} \times p'_{\gamma'}} \ar[r]^(0.575){\ast} & 
\Omega_{\gamma \ast \gamma'} G \ar[d]^-{p'_{\gamma \ast \gamma'}} \\
\Omega_0 G \times \Omega_0 G \ar[r]^(0.575){\ast} & \Omega_0 G
}
\]
commutes.  Hence $p'_G$ is a map of groupoids.

\begin{proposition} \label{p:desc}
Suppose that $G$ is a path-connected topological group and $Z(G)$ is discrete. Then $\To_G$ descends to a homomorphism $\widehat{\To}_G : \pi_1(PG) \rightarrow [G, \Omega_0 G]$ of abelian groups, i.e.\ there is a commutative diagram of groupoids: 
\[
\xymatrix{
\pi^Z(G) \ar[d]_-{p_G} \ar[r]^-{\To_G} & [G, \Omega_Z G] \ar[d]^-{p'_G} \\
\pi_1(PG) \ar[r]^-{\widehat{\To}_G} & [G, \Omega_0 G]
}
\]
\end{proposition}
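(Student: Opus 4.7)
The plan is to establish that the composite $p'_G \circ \To_G \colon \pi^Z(G) \to [G, \Omega_0 G]$ depends only on the image under $p_G$, so it descends through $p_G$ to the required map $\widehat{\To}_G$. Since both $\To_G$ and $p'_G$ are morphisms of groupoids (the latter by the commutative square displayed just before the proposition), the composite is a morphism of groupoids, so any descended map will be one automatically; as the target $[G, \Omega_0 G]$ is a one-object groupoid, this forces $\widehat{\To}_G$ to be a group homomorphism. Uniqueness of $\widehat{\To}_G$ follows from the surjectivity of $p_G$ on morphisms: since $q \colon G \to PG$ is a covering with fibre $Z(G)$, any loop in $PG$ based at $[e]$ lifts to a path in $G$ from $e$ into $q^{-1}([e]) = Z(G)$.

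To identify the fibres of $p_G$ on morphisms, I use that $Z(G)$ is discrete. Given $[\gamma], [\gamma'] \in \pi^Z(G)$ with $p_G([\gamma]) = p_G([\gamma'])$, a path homotopy between $q \circ \gamma$ and $q \circ \gamma'$ in $PG$ rel $\partial I$ lifts to a homotopy $\widetilde H \colon I \times I \to G$ with $\widetilde H(\cdot, 0) = \gamma$ and with $\widetilde H(0, \cdot), \widetilde H(1, \cdot)$ landing in $Z(G)$; by discreteness these endpoint paths are constant, so $\widetilde H$ is a homotopy inside $\pi^Z(G)$ from $\gamma$ to some $\gamma''$ that projects to $q \circ \gamma'$ as a map. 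Since $Z(G)$ acts simply transitively on fibres of $q$, the function $t \mapsto \gamma''(t)\gamma'(t)^{-1}$ lands in $Z(G)$ and is continuous, hence constant; writing its value as $c \in Z(G)$ gives $\gamma'' = c\gamma'$ pointwise. Therefore the equivalence identified by $p_G$ is exactly $[\gamma] = [c\gamma']$ in $\pi^Z(G)$ for some $c \in Z(G)$.

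The core of the argument is then the identity $p'_{c\gamma} \circ \rho_{c\gamma} = p'_\gamma \circ \rho_\gamma$ as maps $G \to \Omega_0 G$, for every $c \in Z(G)$ and $\gamma \in \Omega_Z G$. For $x \in G$ and $t \in I$, centrality of $c$ yields
\[
p'_{c\gamma}(\rho_{c\gamma}(x))(t) = x(c\gamma(t))x^{-1}(c\gamma(t))^{-1} = c \cdot x\gamma(t)x^{-1}\gamma(t)^{-1} \cdot c^{-1} = p'_\gamma(\rho_\gamma(x))(t),
\]
so $p'_G(\To_G([c\gamma])) = p'_G(\To_G([\gamma]))$. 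Combined with the fibre description of $p_G$ above, this establishes the descent, and the square in the statement commutes by construction.

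It remains to check that $[G, \Omega_0 G]$ is abelian, which follows by the Eckmann--Hilton principle: $\Omega_0 G$ inherits both the pointwise multiplication from $G$ and the loop-concatenation H-space structure, and these satisfy the interchange law strictly, since pointwise multiplication distributes over the piecewise definition of concatenation. Hence the two induced operations on $[G, \Omega_0 G]$ coincide and are commutative. I expect the covering-space lifting in the second paragraph to require the most care, but the crucial step is the short centrality computation in the third paragraph, which is essentially forced by the definitions.
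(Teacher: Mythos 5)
Your proof is correct and follows essentially the same route as the paper's: both hinge on the covering $G \to PG$ (which exists because $Z(G)$ is discrete), the observation that two lifts of the same loop in $PG$ differ pointwise by a central element $c$, and the short computation that centrality of $c$ makes $p'_{c\gamma}\circ\rho_{c\gamma}=p'_\gamma\circ\rho_\gamma$. The paper packages this as ``define $\widehat{\To}_G$ by choosing a lift, then check independence of lift and of representative,'' whereas you describe the fibres of $p_G$ explicitly and show $p'_G\circ\To_G$ is constant on them; these are the same argument in slightly different clothing, and your Eckmann--Hilton remark at the end is the justification the paper leaves implicit (cf.\ the proof of Lemma~\ref{l:to+bto}(b)).
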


\begin{proof}
First we define $\widehat{\To}_G$. Since $Z(G)$ is discrete, the projection $G \rightarrow PG$ is a covering, so a loop $\gamma \in \Omega PG$ can be lifted to a path $\tilde{\gamma} \in \Omega_Z G$ and we define $\widehat{\To}_G([\gamma]) = p'_G \circ \To_G([\tilde{\gamma}])$. If $\tilde{\gamma}'$ is another lift of $\gamma$, then $\tilde{\gamma}' = a\tilde{\gamma}$ for $a=\tilde{\gamma}'(0)\tilde{\gamma}(0)^{-1} \in Z(G)$. Then 
\[ 
(p'_{\tilde{\gamma}'} \circ \rho_{\tilde{\gamma}'}(x))(t) = x\tilde{\gamma}'(t)x^{-1}\tilde{\gamma}'(t)^{-1} = xa\tilde{\gamma}(t)x^{-1}\tilde{\gamma}(t)^{-1}a^{-1} = x\tilde{\gamma}(t)x^{-1}\tilde{\gamma}(t)^{-1} = (p'_{\tilde{\gamma}} \circ \rho_{\tilde{\gamma}}(x))(t) \text{\,,}
\]
so $p'_G \circ \To_G([\tilde{\gamma}']) = p'_G \circ \To_G([\tilde{\gamma}])$.
Therefore $\widehat{\To}_G([\gamma])$ does not depend on the choice of the lift $\tilde{\gamma}$. 
Since a homotopy of $\gamma$ can be lifted to a homotopy of $\tilde{\gamma}$, 
$\widehat{\To}_G([\gamma])$ is also independent of the choice of the representative $\gamma$ of the homotopy class $[\gamma]$. Therefore $\widehat{\To}_G$ is well-defined. By its construction, the diagram commutes. Finally, a lift of the concatenation of two loops in $\Omega PG$ is the concatenation of lifts of the loops, so $\widehat{\To}_G$ is a map of groupoids, i.e.\ a group homomorphism.
\end{proof}

In light of the above, it is useful define the normalised turning obstruction map in the general setting: 

\begin{definition}
Suppose that $G$ is a path-connected topological group and $\gamma \in \Omega_Z G$.
\begin{compactenum}[a)]
\item Let $\bar{\rho}_{\gamma} : G \rightarrow \Omega_0 G$ be defined by $\bar{\rho}_{\gamma}(x) = (t \mapsto x\gamma(t)x^{-1}\gamma(t)^{-1})$.
\item For any CW-complex $X$ let $\overline{\To}_{\gamma} = (\bar{\rho}_{\gamma})_* : [X, G] \rightarrow [X, \Omega_0 G]$.
\end{compactenum}
\end{definition}

\noindent
That is, $\bar{\rho}_{\gamma} = p'_{\gamma} \circ \rho_{\gamma}$ and hence 
$\overline{\To}_{\gamma} = (p'_{\gamma})_* \circ \To_{\gamma} \colon  [X, G] \rightarrow [X, \Omega_0 G]$. 
Since $p'_{\gamma}$ is a homeomorphism, $(p'_{\gamma})_* \colon [X, \Omega_{\gamma} G] \rightarrow [X, \Omega_0 G]$ is a bijection, so computing $\ol{\To}_{\gamma}$ is equivalent to computing $\To_{\gamma}$. In particular $\ol{\To}_\gamma$ and $\To_\gamma$ vanish for the same bundles. From our earlier arguments we have

\begin{proposition} \label{p:bto}
Suppose that $G$ is a path-connected topological group and $g \colon X \to G$ is a map.
\begin{compactenum}[a)]
\item If $\gamma, \gamma' \in \Omega_Z G$ are composable paths, then $\overline{\To}_{\gamma \ast \gamma'}([g]) = \overline{\To}_{\gamma}([g]) + \overline{\To}_{\gamma'}([g])$. 
\item For any $\gamma \in \Omega_Z G$ and $a \in Z(G)$ we have $\overline{\To}_{a\gamma}([g]) = \overline{\To}_{\gamma}([g])$.
\qed
\end{compactenum}
\end{proposition}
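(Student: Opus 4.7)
My plan is to reduce both statements to an identity at the level of the normalised commutator maps $\bar\rho_\gamma \colon G \to \Omega_0 G$ themselves, from which the conclusion on homotopy classes in $[X,\Omega_0 G]$ follows immediately by post-composition with $g \colon X \to G$. Both calculations are short and in fact each already appeared implicitly in the preparation for Proposition~\ref{p:desc}.

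For part (a), I would first note that conjugation by $x$ is applied pointwise in $t$ and therefore commutes with path concatenation: directly from the defining formula $\rho_\gamma(x)(t) = x\gamma(t)x^{-1}$ one has $\rho_{\gamma \ast \gamma'}(x) = \rho_\gamma(x) \ast \rho_{\gamma'}(x)$ in $\Omega_{\gamma \ast \gamma'} G$. Postcomposing with the homeomorphisms $p'_\bullet$ and using the commutative square displayed just above Proposition~\ref{p:desc} (which says that concatenation intertwines $p'_\gamma \times p'_{\gamma'}$ with $p'_{\gamma \ast \gamma'}$), I obtain
\[
\bar\rho_{\gamma \ast \gamma'}(x) \;=\; \bar\rho_\gamma(x) \ast \bar\rho_{\gamma'}(x)
\]
pointwise in $\Omega_0 G$. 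Since the group operation on $[X,\Omega_0 G]$ is the one induced by the loop-concatenation H-space structure on $\Omega_0 G$, applying $[X,-]$ gives the additivity claim after composing with $g$.

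For part (b), I would compute directly, using only that $a \in Z(G)$:
\[
\bar\rho_{a\gamma}(x)(t) \;=\; x\bigl(a\gamma(t)\bigr)x^{-1}\bigl(a\gamma(t)\bigr)^{-1} \;=\; a\,x\gamma(t)x^{-1}\gamma(t)^{-1}a^{-1} \;=\; x\gamma(t)x^{-1}\gamma(t)^{-1} \;=\; \bar\rho_\gamma(x)(t),
\]
so in fact $\bar\rho_{a\gamma} = \bar\rho_\gamma$ as continuous maps $G \to \Omega_0 G$. Postcomposing with $g$ yields $\overline{\To}_{a\gamma}([g]) = \overline{\To}_\gamma([g])$ on the nose. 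This is essentially the computation already carried out in the proof of Proposition~\ref{p:desc} to show that $\widehat{\To}_G$ is well-defined.

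Neither step is genuinely an obstacle. The only mild bookkeeping point is making sure that the symbol ``$+$'' in part (a) refers to the group structure on $[X,\Omega_0 G]$ induced by loop concatenation on $\Omega_0 G$, rather than to a different structure one could consider (e.g.\ the pointwise product coming from the group structure on $G$); these two structures agree by the usual Eckmann--Hilton argument, and this convention is already in force by the set-up preceding Proposition~\ref{p:desc}.
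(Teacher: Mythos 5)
Your proposal is correct and takes essentially the same route the paper intends: the paper gives no explicit proof, deducing the proposition ``from our earlier arguments,'' and those arguments are precisely the facts you invoke — that $\To_G$ and $p'_G$ are morphisms of groupoids (so $\bar\rho_{\gamma\ast\gamma'}=\bar\rho_\gamma\ast\bar\rho_{\gamma'}$ pointwise, giving part (a)), and the displayed centrality computation inside the proof of Proposition~\ref{p:desc} (which is word-for-word your part (b)). Your write-up just makes the pointwise identities explicit before passing to $[X,\Omega_0 G]$; both the identities and the Eckmann--Hilton remark about the two agreeing group structures are handled correctly.
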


If $Z(G)$ is discrete, then we can use Proposition \ref{p:desc} to describe $\overline{\To}_{\gamma}([g])$, regarded as a two-variable function in $\gamma$ and $[g]$, in terms of a simpler function.
Hence we define 
\[
\widehat{\To}_{\cdot}(\cdot) \colon \pi_1(PG) \times [X, G] \rightarrow [X, \Omega_0 G] \text{\,,}
\quad \quad
\widehat{\To}_{[\gamma]}([g]) = \widehat{\To}_G([\gamma]) \circ [g] \text{\,.}
\]
Equivalently, $\widehat{\To}_{[\gamma]}([g]) = g^*(\widehat{\To}_G([\gamma])) = \widehat{\To}_G([\gamma])_*([g])$.

\begin{proposition} \label{p:whto}
Suppose that $G$ is a path-connected topological group and $Z(G)$ is discrete. 
For every CW-complex $X$ the maps $\overline{\To}_{\cdot}(\cdot)$ and $\widehat{\To}_{\cdot}(\cdot)$ satisfy
\begin{compactenum}[a)]
\item $\overline{\To}_{\gamma}([g]) = \widehat{\To}_{p_G([\gamma])}([g])$ for every $\gamma \in \Omega_Z G$ and $[g] \in [X, G]$;
\item $\widehat{\To}_{\cdot}([g]) : \pi_1(PG) \rightarrow [X, \Omega_0 G]$ is a homomorphism of abelian groups for every $[g] \in [X, G]$;
\item If $X$ is a suspension, then the map $\widehat{\To}_{[\gamma]} \colon [X, G] \rightarrow [X, \Omega_0 G]$ is a homomorphism of abelian groups for every $[\gamma] \in \pi_1(PG)$. 
\end{compactenum}
\end{proposition}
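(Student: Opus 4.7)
The plan is to deduce all three claims formally from Proposition~\ref{p:desc}, which supplies the homomorphism $\widehat{\To}_G \colon \pi_1(PG) \to [G, \Omega_0 G]$ and the commutative square $p'_G \circ \To_G = \widehat{\To}_G \circ p_G$. The remaining work is a short diagram chase together with naturality of the two group structures on $[X, \Omega_0 G]$ coming, respectively, from the H-space structure on $\Omega_0 G$ and (for part~(c)) the co-H-space structure on a suspension.

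For part~(a), I would simply unwind definitions. By construction $\overline{\To}_\gamma([g]) = [\bar{\rho}_\gamma \circ g]$, which can be written as $[\bar{\rho}_\gamma] \circ [g]$ under the natural postcomposition pairing $[G, \Omega_0 G] \times [X, G] \to [X, \Omega_0 G]$. Since $\bar{\rho}_\gamma = p'_\gamma \circ \rho_\gamma$, the class $[\bar{\rho}_\gamma]$ equals $p'_G(\To_G([\gamma]))$, which by the square in Proposition~\ref{p:desc} equals $\widehat{\To}_G(p_G([\gamma]))$. Substituting back gives exactly $\widehat{\To}_{p_G([\gamma])}([g])$.

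For part~(b), I would factor $\widehat{\To}_{\cdot}([g])$ as the composition
\[ \pi_1(PG) \xrightarrow{\widehat{\To}_G} [G, \Omega_0 G] \xrightarrow{(-)\circ [g]} [X, \Omega_0 G]. \]
The first arrow is a homomorphism by Proposition~\ref{p:desc}. The second is a homomorphism because the group structures on both $[G, \Omega_0 G]$ and $[X, \Omega_0 G]$ are defined pointwise from the H-space structure on $\Omega_0 G$, and pointwise multiplication of based maps is visibly preserved by precomposition with a fixed continuous map. For part~(c), I would choose a lift $\tilde\gamma \in \Omega_Z G$ of $[\gamma]$ and use part~(a) to identify $\widehat{\To}_{[\gamma]}$ with $\overline{\To}_{\tilde\gamma} = (\bar{\rho}_{\tilde\gamma})_*$. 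When $X$ is a suspension, the group structures on both $[X, G]$ and $[X, \Omega_0 G]$ can be taken to arise from the co-H-space comultiplication on $X$; with that choice, postcomposition $f_* \colon [X, G] \to [X, \Omega_0 G]$ with any continuous $f$ is tautologically a group homomorphism, and applying this to $f = \bar{\rho}_{\tilde\gamma}$ finishes the argument.

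The only real subtlety, and so the one step I would write out carefully, is the verification that the co-H-space structure on $X$ and the H-space structure on $\Omega_0 G$ induce the same abelian group structure on $[X, \Omega_0 G]$; this is a standard Eckmann--Hilton argument and has already been invoked in the proof of Lemma~\ref{l:to+bto}(b), so I would just refer to it. Otherwise the proposition is a pure formal consequence of Proposition~\ref{p:desc}, and I do not expect any genuine obstacle.
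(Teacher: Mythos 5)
Your proposal is correct and follows essentially the same route as the paper's proof: part~(a) from the commutative square in Proposition~\ref{p:desc}, part~(b) by factoring through $\widehat{\To}_G$ and precomposition with $g$ (a homomorphism because $\Omega_0 G$ is an H-space), and part~(c) by observing that $\widehat{\To}_{[\gamma]}$ is induced by a map of spaces and $X$ is a suspension. You have merely written out the steps at a somewhat greater level of detail than the paper's terse argument, including correctly flagging the Eckmann--Hilton comparison of group structures, which the paper delegates to Lemma~\ref{l:to+bto}(b).
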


\begin{proof}
Part a) follows from the commutativity of the diagram in Proposition \ref{p:desc}. 

Part b) holds, because $\widehat{\To}_G$ is a homomorphism and the induced map $g^* \colon [G, \Omega_0 G] \rightarrow [X, \Omega_0 G]$ is a homomorphism for every $[g] \in [X, G]$, because $\Omega_0 G$ is an H-space.

Part c) holds, because $\widehat{\To}_{[\gamma]}$ is induced by a map of spaces.
\end{proof}

\begin{remark} \label{r:gato}
Just as with the turning obstruction for vector bundles,
we can take the forgetful adjoints of $\To_\gamma$ and $\ol{\To}_\gamma$
and define $\TO_\gamma : = \ad \circ \To_\gamma \colon [X, G] \to [SX, G]$
and $\ol{\TO}_\gamma : = \ad \circ \ol{\To}_\gamma \colon [X, G] \to [SX, G]$.
We leave the reader to formulate and verify the obvious generalisation of Lemma~\ref{l:to+bto}.
Then by Proposition~\ref{p:togamma},
a principal $G$-bundle $P \to SX$ with clutching function $g \colon X \to G$
is $\gamma$-turnable if and only if $\TO_\gamma([g]) = 0$,
equivalently if $\ol \TO_\gamma([g]) = 0$.
\end{remark}

We now deduce some consequences of Proposition~\ref{p:whto}.
Recall that $[\eta] \in \pi_1(SO_{2k})$ is the generator.

\begin{theorem} \label{thm:gpd-appl}
Let $\gamma \in \Omegapmone$, $X$ be a $CW$-complex and $E \to SX$ a rank-$2k$ vector bundle
with clutching function $g \colon X \to SO_{2k}$.
Then 
\begin{compactenum}[a)]
\item $2\ol{\To}_\eta([g]) = 0$;
\item $\ol{\To}_{\eta \ast \gamma}([g]) = \ol{\To}_{\eta}([g]) + \ol{\To}_\gamma([g])$;
\item If $k$ is even, then $2\ol{\To}_\gamma([g]) = 0$;
\item If $k$ is odd, then $2\ol{\To}_\gamma([g]) = \ol{\To}_\eta([g])$ and hence $4\ol{\To}_\gamma([g]) = 0$.
\end{compactenum}
\end{theorem}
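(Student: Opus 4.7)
The plan is to reduce everything to the homomorphism $\widehat{\To}_{\cdot}([g]) \colon \pi_1(PSO_{2k}) \to [X, \Omega_0 SO_{2k}]$ provided by Proposition \ref{p:whto}, applied to the path-connected group $G = SO_{2k}$, whose centre $\{\pm \idbb\}$ is discrete (for $k \geq 2$, which we assume throughout). The orders computed in $\pi_1(PSO_{2k})$ in Lemma \ref{l:pi_1PSO_2k} will then translate directly into the claimed orders of $\ol{\TO}_\eta$ and $\ol{\TO}_\gamma$.

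First, part (b) is immediate from Proposition \ref{p:bto}(a): since $\eta$ is a loop at $\idbb$ and $\gamma$ starts at $\idbb$, the paths are composable and
\[ \ol{\To}_{\eta \ast \gamma}([g]) = \ol{\To}_\eta([g]) + \ol{\To}_\gamma([g]).\]

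For part (a), apply Proposition \ref{p:whto}(a), (b): since $\widehat{\To}_\cdot([g])$ is a homomorphism and $p_G([\eta]) \in \pi_1(PSO_{2k})$ has order $2$ (it is the image of the generator of $\pi_1(SO_{2k}) = \Z/2$ under the injection $\pi_1(SO_{2k}) \hookrightarrow \pi_1(PSO_{2k})$ coming from the covering $SO_{2k} \to PSO_{2k}$), we obtain
\[ 2 \ol{\To}_\eta([g]) = 2 \widehat{\To}_{p_G([\eta])}([g]) = \widehat{\To}_{2 p_G([\eta])}([g]) = 0. \]

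For parts (c) and (d), I would invoke the explicit description of $\pi_1(PSO_{2k})$ given in the proof of Lemma \ref{l:pi_1PSO_2k}. When $k$ is even, $\pi_1(PSO_{2k}) \cong \Z/2 \oplus \Z/2$, with $p_G([\beta])$, $p_G([\bar\beta])$ and $p_G([\eta])$ each of order $2$; since $\gamma \in \Omegapmone$ satisfies $[\gamma] \in \{[\beta], [\bar\beta]\}$, the homomorphism $\widehat{\To}_\cdot([g])$ gives $2\ol{\To}_\gamma([g]) = 0$, proving (c). When $k$ is odd, $\pi_1(PSO_{2k}) \cong \Z/4$ and the relation $[-\beta] = [\bar\beta]^{-1}$ combined with $[\bar\beta]^{-1} \circ [\beta] = [\eta]$ yields $p_G([\beta])^2 = p_G([\eta])$ (and similarly for $\bar\beta$). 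Hence
\[ 2 \ol{\To}_\gamma([g]) = 2 \widehat{\To}_{p_G([\gamma])}([g]) = \widehat{\To}_{p_G([\eta])}([g]) = \ol{\To}_\eta([g]), \]
and then $4\ol{\To}_\gamma([g]) = 2\ol{\To}_\eta([g]) = 0$ by part (a), giving (d).

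There is no real obstacle here beyond bookkeeping; the substantive work was packaged into Proposition \ref{p:whto} and Lemma \ref{l:pi_1PSO_2k}. The only point requiring mild care is to make sure one genuinely uses the homomorphism property of $\widehat{\To}_\cdot([g])$ in the first variable (Proposition \ref{p:whto}(b)) rather than in the second, since $X$ is not assumed to be a suspension in the statement. The theorem in the introduction (Theorem \ref{t:to-ord24}) is then obtained by passing to the adjointed versions via the generalisation of Lemma \ref{l:to+bto} indicated in Remark \ref{r:gato}.
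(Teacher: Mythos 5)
Your proof is correct and follows essentially the same route as the paper's: the paper also derives parts~(c) and~(d) from Proposition~\ref{p:whto} together with the computation of $\pi_1(PSO_{2k})$ in Lemma~\ref{l:pi_1PSO_2k}. The only small divergence is in part~(a), where the paper cites Proposition~\ref{p:bto}(a) directly (since $\eta\ast\eta$ is nullhomotopic, $2\ol{\To}_\eta([g])=\ol{\To}_{\eta\ast\eta}([g])=0$), whereas you route through $\widehat{\To}$ and the order of $p_G([\eta])$ in $\pi_1(PSO_{2k})$; both are valid and amount to the same underlying computation.
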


\begin{proof}
Parts a) and b) are direct applications of Proposition~\ref{p:bto} a).

Parts c) and d) follow from Proposition~\ref{p:whto} a), Proposition~\ref{p:whto} b) and the fact that $\pi_1(PSO_{2k}) \cong (\Z/2)^2$ when $k$ is even and $\pi_1(PSO_{2k}) \cong \Z/4$ when $k > 1$ is odd; see Lemma \ref{l:pi_1PSO_2k}.
\end{proof}

\subsection{Samelson products and turning obstructions} \label{ss:SP_and_TO}
In this subsection we relate turning obstructions to Samelson products. The Samelson product is a classical operation in algebraic topology \cite{s53}, and Samelson products can be delicate to compute. First, we show that taking the Samelson product with some loop $[\gamma] \in \pi_1(G)$ coincides with the normalised turning obstruction map $\ol{\TO}_\gamma$ (after suitable identifications), see Lemma \ref{l:TO_and_SP}. Second, we show that turning obstructions in $G$ are determined by Samelson products in $PG$ (see Corollary \ref{c:to-from-sp}). As an application we determine some Samelson products based on our calculations of turning obstructions in Section \ref{ss:TOeta} (see Proposition \ref{p:sam-eta}). Finally, we show that our results on the $\eta$-turning obstruction have consequences for the high-dimensional topology of related gauge groups.

We start by recalling the definition of the Samelson product (in the special case when one of the operands is a loop). Assume that $X$ is connected with $x_0 \in X$ a base point and let $g \colon (X, x_0) \to (G, e)$ be a based map.
Let $\gamma \colon (S^1, 1) \to (G, e)$ be a map representing $[\gamma] \in \pi_1(G, e)$.
Then there is a well-defined map
\[ \mathrm{comm}_{g, \gamma} \colon X \wedge S^1 = \Sigma X \to G, 
\quad [x, t] \mapsto g(x)\gamma(t)g(x)^{-1}\gamma(t)^{-1} \text{\,.} \]
This construction gives rise to the {\em Samelson product} 
\[ [X, G]_* \times \pi_1(G, e) \to [\Sigma X, G]_*,
\quad ([g], [\gamma]) \mapsto \an{[g], [\gamma]} := [\mathrm{comm}_{g, \gamma} ] \text{\,.}\]
We can identify the set $[\Sigma X, G]_*$ with $[SX, G]$ via the forgetful map $[\Sigma X, G]_* \rightarrow [\Sigma X, G]$ and the map $[\Sigma X, G] \rightarrow [SX, G]$ induced by the collapse map $SX \rightarrow \Sigma X$ (which are both isomorphisms). The following lemma 
is a direct consequence of the definitions (where $\ol{\TO}_\gamma$ is defined in Remark~\ref{r:gato}).

\begin{lemma} \label{l:TO_and_SP}
For any $[\gamma] \in \pi_1(G, e)$ and $g \colon (X, x_0) \to (G, e)$ we have 
$\ol{\TO}_\gamma([g]) =\an{[g], [\gamma]} \in [SX, G]$. \qed
\end{lemma}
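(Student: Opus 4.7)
The plan is to observe that both sides of the equation $\ol{\TO}_\gamma([g]) = \an{[g], [\gamma]}$ are represented by literally the same formula $[x,t] \mapsto g(x)\gamma(t)g(x)^{-1}\gamma(t)^{-1}$; the proof consists entirely of tracing through the two adjunctions to see this.

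First, I would unwind $\ol{\TO}_\gamma([g])$. By Definition of $\ol{\To}_\gamma$ in Section~\ref{ss:gen}, the map $\bar\rho_\gamma \circ g \colon X \to \Omega_0 G$ sends $x$ to the loop $t \mapsto g(x)\gamma(t)g(x)^{-1}\gamma(t)^{-1}$. Taking the forgetful adjoint (as in Remark~\ref{r:gato}) produces the map
\[ F \colon SX \to G, \qquad [x,t] \mapsto g(x)\gamma(t)g(x)^{-1}\gamma(t)^{-1}, \]
and by definition $\ol{\TO}_\gamma([g]) = [F] \in [SX, G]$. Well-definedness on $SX$ is automatic because $\bar\rho_\gamma(g(x))$ is a loop at $e$ for every $x$, so $F$ sends the cone points to $e$.

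Second, I would spell out the identification $[\Sigma X, G]_* \cong [SX, G]$ used in the statement. It is the composite of the forgetful map $[\Sigma X, G]_* \to [\Sigma X, G]$ (a bijection since $G$ is path-connected) with the bijection $[\Sigma X, G] \to [SX, G]$ induced by the collapse $q \colon SX \to \Sigma X$ which crushes $\{x_0\} \times I$ and the two cone points to the basepoint of $\Sigma X$. Under this identification, $\an{[g], [\gamma]} = [\mathrm{comm}_{g,\gamma}]$ corresponds to $[\mathrm{comm}_{g,\gamma} \circ q]$. But by definition of $\mathrm{comm}_{g,\gamma}$, the composite $\mathrm{comm}_{g,\gamma} \circ q \colon SX \to G$ sends $[x,t]$ to $g(x)\gamma(t)g(x)^{-1}\gamma(t)^{-1}$; the pointed hypotheses $g(x_0) = e$ and $\gamma(0) = \gamma(1) = e$ guarantee that this formula descends from $SX$ to $\Sigma X$, so $\mathrm{comm}_{g,\gamma}$ pulls back to $F$ on the nose. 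Hence $\an{[g], [\gamma]} = [F] = \ol{\TO}_\gamma([g])$.

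There is no genuine obstacle: the only thing to be careful about is bookkeeping of the two slightly different conventions (reduced versus unreduced suspension, basepoint-preserving versus free homotopy classes), and checking that both formulas make sense at the degenerate loci (cone points of $SX$, and the wedge line $\{x_0\} \times I$ that is collapsed by $q$). This is precisely why the authors describe the lemma as a direct consequence of the definitions.
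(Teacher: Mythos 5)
Your proof is correct and spells out exactly what the paper asserts as ``a direct consequence of the definitions'': both $\ol{\TO}_\gamma([g])$ and the image of $\an{[g],[\gamma]}$ under the identification $[\Sigma X, G]_* \cong [SX,G]$ are represented by the same map $[x,t]\mapsto g(x)\gamma(t)g(x)^{-1}\gamma(t)^{-1}$. One small imprecision: the forgetful map $[\Sigma X, G]_* \to [\Sigma X, G]$ is a bijection not merely because $G$ is path-connected, but because $G$ is a path-connected topological group (hence an H-space), which forces the $\pi_1(G)$-action on pointed homotopy classes to be trivial and hence the forgetful map to be injective as well as surjective.
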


Lemma \ref{l:TO_and_SP} implies that certain Samelson products can be computed as a special case of turning obstructions. 
On the other hand, we next we show that turning obstructions can be computed from certain Samelson products.

\begin{definition}
Let $G$ be a path-connected topological group. We define the map $\Sp_G : \pi_1(G) \rightarrow [G, \Omega_0 G]$ by $\Sp_G([\gamma]) = [x \mapsto (t \mapsto x\gamma(t)x^{-1}\gamma(t)^{-1})]$.
\end{definition}

The map $\Sp_G$ is a homomorphism, because the group structure of $[G, \Omega_0 G]$ can be defined via concatenation in $\Omega_0 G$. This homomorphism encodes the Samelson product (similarly to how $\To_G$ encodes the turning obstruction): if $[\gamma] \in \pi_1(G)$ and $g : X \rightarrow G$, then $\an{[g], [\gamma]} \in [\Sigma X, G]_*$ is the adjoint of $\Sp_G([\gamma]) \circ [g] \in [X, \Omega_0 G]_*$.

Let $\pi : G \rightarrow PG$ denote the projection.

\begin{proposition} \label{p:sp-to}
Suppose that $G$ is a path-connected topological group and $Z(G)$ is discrete. Then there is a commutative diagram of groups
\[
\xymatrix{
\pi_1(PG) \ar[r]^-{\widehat{\To}_G} \ar[d]_-{\Sp_{PG}} & [G, \Omega_0 G] \ar[d]^-{(\Omega_0 \pi)_*} \\
[PG, \Omega_0 PG] \ar[r]^-{\pi^*} & [G, \Omega_0 PG]
}
\]
where $(\Omega_0 \pi)_*$ is an isomorphism.
\end{proposition}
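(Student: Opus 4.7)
The plan is to prove commutativity of the square and that $(\Omega_0\pi)_*$ is an isomorphism, both by working with representatives. For commutativity, trace a class $[\gamma] \in \pi_1(PG)$ around both sides: pick a lift $\tilde\gamma$ of $\gamma$ to a path in $G$ starting at $e$ (possible since $\pi \colon G \to PG$ is a covering, as $Z(G)$ is discrete). By the definition of $\widehat{\To}_G$ in Proposition~\ref{p:desc}, going right-then-down sends $[\gamma]$ to the class of $x \mapsto \bigl(t \mapsto \pi(x\tilde\gamma(t)x^{-1}\tilde\gamma(t)^{-1})\bigr)$, while going down-then-right sends $[\gamma]$ to the class of $x \mapsto \bigl(t \mapsto \pi(x)\gamma(t)\pi(x)^{-1}\gamma(t)^{-1}\bigr)$. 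Since $\pi$ is a group homomorphism with $\pi\circ\tilde\gamma = \gamma$, these agree on the nose, so the square commutes.

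For the isomorphism, I would show that the map $\Omega_0\pi\colon \Omega_0G \to \Omega_0PG$ is itself a homeomorphism. Granting this, $(\Omega_0\pi)_*$ on $[G,-]$ is immediately a bijection, and it is a group homomorphism since $\Omega_0\pi$ is a map of H-spaces, hence an isomorphism. To get the homeomorphism, identify $\Omega G$ and $\Omega PG$ with subspaces of the path spaces $P_eG$ and $P_{\bar e}PG$ of paths starting at the identity; unique path and homotopy lifting for the covering $\pi$ makes $\pi_\ast \colon P_eG \to P_{\bar e}PG$ a homeomorphism, and restricting to $\Omega G$ (as the preimage of $\{e\}$ under the endpoint evaluation) and then to path components of the constant loops gives the desired conclusion.

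The main obstacle I anticipate is showing that the image of $\Omega_0 G$ under $\Omega\pi$ is \emph{all} of $\Omega_0 PG$. Given $\ell \in \Omega_0 PG$, its lift $\tilde\ell$ to $G$ is a priori only a path, not a loop. The argument is to lift a nullhomotopy $H\colon I \times I \to PG$ of $\ell$ to $\tilde H\colon I \times I \to G$ with $\tilde H(0,u)=e$; then $u \mapsto \tilde H(1,u)$ is a continuous path in the discrete fiber $Z(G)=\pi^{-1}(\bar e)$, hence constant, and equals $e$ at $u=1$ because $\tilde H(\cdot, 1) \equiv e$. Thus $\tilde\ell(1)=\tilde H(1,0)=e$, so $\tilde\ell \in \Omega G$, and $\tilde H$ exhibits it as nullhomotopic, i.e.\ $\tilde\ell \in \Omega_0 G$. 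Discreteness of $Z(G)$ is essential at this step; without it the endpoint of the lifted nullhomotopy could wander in $Z(G)$ and the statement would fail.
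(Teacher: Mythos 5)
Your argument is correct and follows essentially the same route as the paper: the commutativity check proceeds by lifting $\gamma$ to a path $\tilde\gamma$ in $G$ and comparing the explicit formulas $x \mapsto \pi(x\tilde\gamma(t)x^{-1}\tilde\gamma(t)^{-1})$ and $x \mapsto \pi(x)\gamma(t)\pi(x)^{-1}\gamma(t)^{-1}$, which agree because $\pi$ is a homomorphism with $\pi\circ\tilde\gamma=\gamma$; and the isomorphism statement reduces to showing that $\Omega_0\pi\colon\Omega_0 G\to\Omega_0 PG$ is an H-space homeomorphism. The only difference is that you spell out the surjectivity of $\Omega_0\pi$ (lifting a nullhomotopy and using discreteness of $Z(G)=\pi^{-1}(\bar e)$ to force the endpoint back to $e$), which the paper states as ``every nullhomotopic loop in $PG$ can be lifted to a nullhomotopic loop in $G$'' without elaboration — your expansion is exactly the right justification.

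One small imprecision worth flagging: when you write that $\pi_*\colon P_e G\to P_{\bar e}PG$ restricts over $\Omega G$ ``as the preimage of $\{e\}$ under endpoint evaluation,'' note that the image of $\Omega G$ under $\pi_*$ is a proper subset of $\Omega PG$ in general, and conversely the full preimage of $\Omega PG$ under $\pi_*$ is $\Omega_Z G$, not $\Omega G$. So the homeomorphism $P_e G \cong P_{\bar e}PG$ does not by itself give a homeomorphism of loop spaces; it only becomes one after passing to the nullhomotopic components, which is precisely what your subsequent nullhomotopy-lifting argument establishes. Your later paragraph makes this correct, but the first framing slightly overstates what restriction alone yields.
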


\begin{proof}
Let $\gamma \in \Omega PG$. Since $Z(G)$ is discrete, the projection $\pi : G \rightarrow PG$ is a covering, so $\gamma$ can be lifted to a path $\tilde{\gamma} \in \Omega_Z G$. By definition we have $\widehat{\To}_G([\gamma]) = [x \mapsto (t \mapsto x\tilde{\gamma}(t)x^{-1}\tilde{\gamma}(t)^{-1})]$. Its image in $[G, \Omega_0 PG]$ is $[x \mapsto (t \mapsto \pi(x\tilde{\gamma}(t)x^{-1}\tilde{\gamma}(t)^{-1}))] = [x \mapsto (t \mapsto \pi(x)\gamma(t)\pi(x)^{-1}\gamma(t)^{-1})]$, using that $\pi \circ \tilde{\gamma} = \gamma$. The image of $\Sp_G([\gamma]) = [y \mapsto (t \mapsto y\gamma(t)y^{-1}\gamma(t)^{-1})]$ in $[G, \Omega_0 PG]$ is also $[x \mapsto (t \mapsto \pi(x)\gamma(t)\pi(x)^{-1}\gamma(t)^{-1})]$, therefore the diagram commutes. 

Since $\pi : G \rightarrow PG$ is a covering, every nullhomotopic loop in $PG$ can be lifted to a nullhomotopic loop in $G$, hence $\Omega_0 \pi : \Omega_0 G \rightarrow \Omega_0 PG$ is a homeomorphism. Moreover, this homeomorphism respects the H-space structures, therefore $(\Omega_0 \pi)_* : [G, \Omega_0 G] \rightarrow [G, \Omega_0 PG]$ is an isomorphism. 
\end{proof}

Recall that by Proposition \ref{p:whto} the turning obstruction map $\overline{\To}_{\gamma}$ can be computed from $\widehat{\To}_G$, namely $\overline{\To}_{\gamma}([g]) = \widehat{\To}_G(p_G([\gamma])) \circ [g] \in [X, \Omega_0 G]$ for every $\gamma \in \Omega_Z G$ and $g : X \rightarrow G$. By Proposition \ref{p:sp-to} we have $\widehat{\To}_G(p_G([\gamma])) = [(\Omega_0 \pi)^{-1}] \circ \Sp_{PG}(p_G([\gamma])) \circ [\pi]$, hence $\overline{\To}_{\gamma}([g]) = [(\Omega_0 \pi)^{-1}] \circ \Sp_{PG}(p_G([\gamma])) \circ [\pi \circ g]$. This shows that $\overline{\To}_{\gamma}$ is determined by $\Sp_{PG}(p_G([\gamma]))$, that is, turning obstructions in $G$ are determined by Samelson products in $PG$. We can also express this in terms of the adjointed versions:

\begin{corollary} \label{c:to-from-sp}
Suppose that $G$ is a path-connected topological group and $Z(G)$ is discrete. Let $\gamma \in \Omega_Z G$ and $g : X \rightarrow G$. Then $\overline{\TO}_{\gamma}([g]) = (\pi_*)^{-1}(\an{[\pi \circ g], p_G([\gamma])}) \in [SX, G]$, where $(\pi_*)^{-1}$ is the inverse of the isomorphism $\pi_* : [SX, G] \rightarrow [SX, PG]$.
\qed
\end{corollary}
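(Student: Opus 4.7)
The plan is to chain together the two structural results just proved, Proposition~\ref{p:whto}(a) and Proposition~\ref{p:sp-to}, and then pass to forgetful adjoints via Lemma~\ref{l:TO_and_SP}. The statement is essentially a bookkeeping exercise, so the bulk of the work is arranging the identifications.

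First I would apply Proposition~\ref{p:whto}(a) to rewrite the left-hand side as
\[
\overline{\To}_\gamma([g]) \;=\; \widehat{\To}_{p_G([\gamma])}([g]) \;=\; \widehat{\To}_G(p_G([\gamma])) \circ [g] \;\in\; [X, \Omega_0 G].
\]
Next, I would feed $p_G([\gamma]) \in \pi_1(PG)$ into the commutative square of Proposition~\ref{p:sp-to}. Since $(\Omega_0 \pi)_*$ is an isomorphism, the image of $\widehat{\To}_G(p_G([\gamma])) \circ [g]$ in $[X, \Omega_0 PG]$ equals $\Sp_{PG}(p_G([\gamma])) \circ [\pi \circ g]$, and the latter element is precisely the unadjointed form of the Samelson product $\an{[\pi \circ g], p_G([\gamma])}$.

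The second step is to pass to forgetful adjoints. Taking adjoints commutes with composition on the left by loop-valued maps, so the equality above turns into
\[
(\Omega_0 \pi)_* \bigl( \overline{\TO}_\gamma([g]) \bigr) \;=\; \an{[\pi \circ g],\, p_G([\gamma])} \;\in\; [SX, PG],
\]
after identifying $[X, \Omega_0 G]$ and $[X, \Omega_0 PG]$ with $[SX, G]$ and $[SX, PG]$ respectively. Under these identifications the isomorphism $(\Omega_0 \pi)_*$ becomes $\pi_* \colon [SX, G] \to [SX, PG]$, so applying $(\pi_*)^{-1}$ yields the claim.

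The main thing to be careful about is that two different adjunctions appear: the forgetful adjoint turning $[X, \Omega_0 H]$ into $[SX, H]$ (for $H = G$ or $PG$), and the loop-level homeomorphism $\Omega_0 \pi$ whose induced map is shown to be an isomorphism in Proposition~\ref{p:sp-to}. The naturality of the adjunction in the target guarantees that these commute with $\pi$, which is what identifies $(\Omega_0 \pi)_*$ with $\pi_*$ on the $[SX,-]$ side and makes the inversion $(\pi_*)^{-1}$ in the statement legitimate. No new computation is required beyond this.
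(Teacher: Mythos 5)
Your proposal is correct and matches the paper's own route: the text preceding the corollary combines Proposition~\ref{p:whto}(a) with the commutative square of Proposition~\ref{p:sp-to} to write $\overline{\To}_\gamma([g])$ in terms of $\Sp_{PG}(p_G([\gamma]))$, and then passes to forgetful adjoints, just as you do. One small remark: the invocation of Lemma~\ref{l:TO_and_SP} at the start is unnecessary and slightly misplaced --- that lemma concerns the special case where $\gamma$ is a loop at the identity of $G$, whereas here $\gamma$ is a general path in $\Omega_Z G$; what you actually use is the definitional adjunction $[X, \Omega_0 H]\cong[SX, H]$ together with the naturality observation that $\ad\circ(\Omega_0\pi)_* = \pi_*\circ\ad$, which you spell out correctly in your final paragraph.
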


\begin{remark}
If $X$ is simply-connected, then $\pi_* : [X, G] \rightarrow [X, PG]$ is also an isomorphism, which allows us to take the reverse point of view and compute Samelson products in $PG$ from turning obstructions in $G$: Suppose that $\gamma \in \pi_1(PG)$ and $g : X \rightarrow PG$, then $\an{[g], [\gamma]} = \pi_*(\overline{\TO}_{\tilde{\gamma}}((\pi_*)^{-1}([g]))) \in [SX, PG]$, where $\tilde{\gamma} \in \Omega_Z G$ is a lift of $\gamma$. 
\end{remark}

In the next section we will compute various turning obstructions (see Theorem~\ref{t:to_S2k}). By Lemma~\ref{l:TO_and_SP} those results give a variety of information about Samelson products $\an{[g], \eta}$ for $\eta \in \pi_1(SO_{2k})$ the generator, for example we get the following proposition. Recall that $\tau_{2k} \in \pi_{2k-1}(SO_{2k})$ is the homotopy class of a clutching function of the tangent bundle of $S^{2k}$, and for $m > 2$ let $\eta_{m} \colon S^{m+1} \to S^{m}$ be essential.

\begin{proposition} \label{p:sam-eta}
The Samelson product $\an{\tau_{2k}, \eta} \in \pi_{2k}(SO_{2k})$ is given as follows:
\begin{compactenum}[a)]
\item If $k = 2j{+}1$ is odd, $\an{\tau_{4j+2}, \eta} = 0$;
\item If $k = 2j$ is even, $\an{\tau_{4j}, \eta} = \tau_{4j}\eta_{4j-1} \neq 0$. \qed
\end{compactenum}
\end{proposition}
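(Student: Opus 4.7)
The plan is to convert the Samelson product into an adjointed turning obstruction and then invoke our computation of the $\eta$-turning obstruction for rank-$2k$ bundles over $S^{2k}$, namely Theorem~\ref{t:to_S2k}.

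First, I would apply Lemma~\ref{l:TO_and_SP} with $X = S^{2k-1}$, $G = SO_{2k}$, $[\gamma] = \eta \in \pi_1(SO_{2k})$, and $g \colon S^{2k-1} \to SO_{2k}$ a based representative of $\tau_{2k}$. Via the standard identification $[SS^{2k-1}, SO_{2k}] \cong \pi_{2k}(SO_{2k})$, this gives
\[ \an{\tau_{2k}, \eta} = \ol{\TO}_{\eta}([\tau_{2k}]) \in \pi_{2k}(SO_{2k}). \]
By Lemma~\ref{l:to+bto}(a) we have $\ol{\TO}_{\eta} = \TO_{\eta}$, so it suffices to compute $\TO_{\eta}([\tau_{2k}])$.

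Second, I would observe that since $\tau_{2k}$ is by definition the clutching function of $TS^{2k}$, the element $\TO_{\eta}([\tau_{2k}])$ is precisely the adjointed $\eta$-turning obstruction of the tangent bundle $TS^{2k} \to S^{2k}$. At this point the proof reduces to a direct appeal to Theorem~\ref{t:to_S2k}, which computes $\TO_{\eta}$ on every clutching function for a rank-$2k$ bundle over $S^{2k}$. Reading off the value at $[\tau_{2k}]$ yields $0$ when $2k = 4j+2$ and $\tau_{4j}\eta_{4j-1}$ when $2k = 4j$, which are exactly assertions (a) and (b).

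The only remaining point is the non-vanishing claim $\tau_{4j}\eta_{4j-1} \neq 0$ in $\pi_{4j}(SO_{4j})$ appearing in part (b); this is a classical consequence of the fact that $\tau_{4j}$ has non-trivial image under the unstable $J$-homomorphism (detectable for instance by the mod~$2$ Euler class or, stably, by the composition with $\eta$ in the $J$-image), and I would cite this directly rather than reprove it. The main obstacle is therefore not in this proposition itself but in Theorem~\ref{t:to_S2k}, whose proof performs the actual unstable computation; once that is in hand, Proposition~\ref{p:sam-eta} is immediate from the two translations above.
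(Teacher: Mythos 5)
Your argument matches the paper's: the proposition carries a \qed because it is exactly the chain $\an{\tau_{2k},\eta} = \ol{\TO}_\eta(\tau_{2k}) = \TO_\eta(\tau_{2k})$ (Lemma~\ref{l:TO_and_SP} plus the generalised Lemma~\ref{l:to+bto}(a), cf.\ Remark~\ref{r:SPmethod}) followed by reading off Theorem~\ref{t:to_S2k}. Your extra remark on the non-vanishing of $\tau_{4j}\eta_{4j-1}$ is not needed, since that fact is already built into the statement of Theorem~\ref{t:to_S2k}(d) (and follows from the exact sequence \eqref{eq:pi2kSO2k}), but it does no harm.
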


\begin{remark}
Proposition \ref{p:sam-eta} can be viewed as an extension of an odd-primary theorem of Hamanaka and Kono \cite[Theorem A]{hk07} to the prime $2$.
\end{remark}

As another application of Lemma~\ref{l:TO_and_SP}, we consider the situation where $\eta$ is not the
turning datum in a turning problem, but instead the clutching function of a bundle.
Let $E^{2k}_\eta \to S^2$ be a fixed non-trivial oriented rank-$2k$ bundle over $S^2$.
Then $\Fr(E)$ is a non-trivial principal $SO_{2k}$-bundle over $S^2$,
we write $\GG^{2k}_\eta$ for the gauge group of $\Fr(E^{2k}_\eta)$
and consider the fibration sequence \eqref{eq:LES1}
for $\GG^{2k}_\eta$, which we write as $\GG^{2k}_{\eta,0} \to \GG^{2k}_\eta \to SO_{2k}$.
As discussed in Section~\ref{ss:gg-eta}, $\GG^{2k}_{\eta, 0} \cong \Map((S^2, \ast), (SO_{2k}, \mathrm{Id}))$.
Hence there is a natural isomorphism $\pi_i(\GG^{2k}_{\eta, 0}) \cong \pi_{i+2}(SO_{2k})$ and
by a theorem of Wockel \cite[Theorem 2.3]{w07} the boundary map
\[ \del^{2k}_\eta \colon \pi_i(SO_{2k}) \to \pi_{i-1}(\GG^{2k}_{\eta, 0}) = \pi_{i+1}(SO_{2k})\]
in the associated long exact sequence is given by $\del^{2k}_\eta([g]) = -\an{[g], \eta}$, for all $[g] \in \pi_i(SO_{2k})$.
Combining Lemma~\ref{l:TO_and_SP} and Theorem~\ref{t:to_S2k} therefore gives 
information about the map $\del^{2k}_\eta$.  
In particular, for $\tau_{2k}$ and $\eta_{4j-1}$ as in Proposition \ref{p:sam-eta}
we have

\begin{proposition} \label{p:GGE_eta}
The boundary map $\del^{2k}_\eta \colon \pi_{2k-1}(SO_{2k}) \to \pi_{2k}(SO_{2k})$ satisfies
$\del^{4j}_\eta(\tau_{4j}) = \tau_{4j}\eta_{4j-1} \neq 0$ and $\del^{4j+2}_\eta(\tau_{4j+2}) = 0$.
\qed
\end{proposition}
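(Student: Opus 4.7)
The proposition is essentially a direct corollary, so my plan is to simply assemble the two inputs already stated just above it. First, I would invoke the theorem of Wockel cited in the paragraph preceding the proposition, which gives the formula
\[ \del^{2k}_\eta([g]) = -\an{[g], \eta} \in \pi_{i+1}(SO_{2k}) \qquad \text{for all } [g] \in \pi_i(SO_{2k}). \]
Specialising to $i = 2k-1$ and $[g] = \tau_{2k}$, I obtain $\del^{2k}_\eta(\tau_{2k}) = -\an{\tau_{2k}, \eta}$.

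Second, I would plug in the computation of these Samelson products from Proposition~\ref{p:sam-eta}. For $k = 2j+1$ odd, this gives
\[ \del^{4j+2}_\eta(\tau_{4j+2}) = -\an{\tau_{4j+2}, \eta} = 0, \]
as required. For $k = 2j$ even, it gives $\del^{4j}_\eta(\tau_{4j}) = -\tau_{4j}\eta_{4j-1}$. To match the sign stated in the proposition, I would observe that $\eta_{4j-1} \in \pi_{4j}(S^{4j-1})$ is (a suspension of) the Hopf element and hence has order dividing $2$, so that $\tau_{4j}\eta_{4j-1}$ is $2$-torsion in $\pi_{4j}(SO_{4j})$ and the sign can be discarded; non-vanishing is then exactly the non-vanishing clause of Proposition~\ref{p:sam-eta}.

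There is no genuine obstacle at the level of this proposition itself: all of its mathematical content is packaged into Proposition~\ref{p:sam-eta} and Wockel's formula, and the only small point to verify is the sign absorption via $2$-torsion of $\eta_{4j-1}$. The real work lies upstream, in establishing Proposition~\ref{p:sam-eta}, which in turn rests on Lemma~\ref{l:TO_and_SP} identifying $\an{[g], \eta}$ with $\ol{\TO}_\eta([g])$ and on the explicit computation of $\TO_\eta$ on $\tau_{2k}$ carried out in Theorem~\ref{t:to_S2k}.
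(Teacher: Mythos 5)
Your proof is correct and is essentially the argument the paper intends (which is why the paper marks the proposition with \verb|\qed| and no displayed proof): Wockel's formula $\del^{2k}_\eta([g]) = -\an{[g],\eta}$ plus the Samelson product computation in Proposition~\ref{p:sam-eta}. Your observation that the sign can be discarded because $\eta_{4j-1}$ has order~$2$ (equivalently, because $\pi_{4j}(SO_{4j})$ is $2$-torsion, as noted in the proof of Theorem~\ref{t:to_S2k}(d)) is precisely the small point needed to reconcile the two formulas.
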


\begin{remark}
If we let $E^\infty_\eta$ denote the stabilisation of the $E^{2k}_\eta$, then its frame bundle $\Fr(E^\infty_\eta)$
is a non-trivial principal $SO$-bundle over $S^2$
and we let $\GG^\infty_\eta$ denote the gauge group of $E^\infty_\eta$.
Since the stable group $SO$ is a homotopy abelian $H$-space,
it follows that there is a weak 
homotopy equivalence
\begin{equation*} \label{eq:GGinfty}
\GG^\infty_\eta \simeq \Map(S^2, SO) \cong \Map_*(S^2, SO) \times SO.
\end{equation*} 
By comparing the homotopy long exact sequences of the fibrations
$\GG^{4j}_{\eta, 0} \to \GG^{4j}_\eta \to SO_{4j}$ 
and $\GG^{\infty}_{\eta, 0} \to \GG^\infty_\eta \to SO$,
where $\GG^{\infty}_{\eta, 0} \subset \GG^\infty_\eta$ is the group of gauge transformations
which are the identity in the fibre over the base-point, 
we see that $\del^{4j}_\eta \colon \pi_{i}(SO_{4j}) \to \pi_{i+1}(SO_{4j})$ is zero for $i < 4j{-}2$.
When $i = 4j{-}2$, the domain of $\del^{4j}_\eta$ is $\pi_{4j-2}(SO_{4j}) \cong \pi_{4j-2}(SO) = 0$,
and so $\del^{4j}_\eta$ vanishes for $i \leq 4j{-}2$.  
Hence Proposition~\ref{p:GGE_eta} shows that the first possibly non-zero boundary 
map in the homotopy long exact sequence of $\GG^{4j}_{\eta, 0} \to \GG^{4j}_\eta \to SO_{4j}$
is in fact non-zero.
\end{remark}

\section{Turning rank-$2k$-bundles over the $2k$-sphere}  \label{s:2k_over_S2k}
In this section we determine the turning obstructions for oriented rank-$2k$-bundles over the
$2k$-sphere.  
To state our results, it will be convenient to use the notation $\TO_+ :=  \TO_\beta$ and
$\TO_- := \TO_{\bar \beta}$ and when 
we wish to discuss these obstructions together, we will write $\TO_\pm$.
We also define the adjointed $\eta$-turning obstruction 
$\TO_\eta := \ad \circ \To_\eta \colon \pi_{2k-1}(SO_{2k}) \to \pi_{2k}(SO_{2k})$.
With this notation, the goal of this section is to compute the homomorphisms
\[ \TO_\pm \colon \pi_{2k-1}(SO_{2k}) \to \pi_{2k}(SO_{2k})
\quad \text{and} \quad
\TO_\eta \colon \pi_{2k-1}(SO_{2k}) \to \pi_{2k}(SO_{2k}).
\]
Thus, if $E \to S^{2k}$ is a rank-$2k$ vector bundle with clutching function
$g \colon S^{2k-1} \to SO_{2k}$, 
then $E$ is positive-turnable if and only if $\TO_+([g]) = 0$, $E$ is negative-turnable
if and only if $\TO_-([g]) = 0$ and $E$ is $\eta$-turnable if and only if $\TO_\eta([g]) = 0$.

In order to state the computations of $\TO_\pm$ and $\TO_\eta$ we record some facts we need about the
source and target groups of these homomorphisms,
which can be found in \cite{k60}.  
We also introduce notation for generators of these groups.
Recall that $\mathrm{e}(\xi) = \mathrm{e}(E_\xi) \in \Z$ is the Euler class and 
that $\mathrm{e}(\xi)$ is even unless $k = 2, 4$, \cite[Theorem p.87]{bm58}.
Let $\tau_{2k} \in \pi_{2k-1}(SO_{2k})$ denote the homotopy class of the clutching function
of $TS^{2k}$.
There is an isomorphism
\[ \pi_{2k-1}(SO_{2k}) \cong \Z(\tau_{2k}) \oplus C(\sigma_{2k}),\]
where $C(\sigma_{2k})$ is a cyclic group isomorphic to $\pi_{2k-1}(SO)$ and $S(\sigma_{2k}) \in \pi_{2k-1}(SO)$
is a generator.  When $k = 2, 4$, we take $\mathrm{e}(\sigma_{2k}) = 1$ and when $k=1$ we assume that
$\sigma_4$ admits a complex structure; see Definition~\ref{d:sigma4} and Theorem~\ref{t:S4}. 
When $k \neq 2, 4$, by Lemma~\ref{l:sigma2k} below, we assume that $C(\sigma_{2k}) = S(\pi_{2k-1}(SO_{2k-2}))$;
in particular $\mathrm{e}(\sigma_{2k}) = 0$.

There are isomorphisms
\[ \pi_{2k}(SO_{2k}) \cong
\begin{cases}
0 & k = 1, 3, \\
\Z/4 & \text{$k \geq 5$ is odd}, \\
(\Z/2)^2 & \text{$k \equiv 2$ mod $4$},\\
(\Z/2)^3 & \text{$k \equiv 0$ mod $4$}.
\end{cases}
\]
When $k$ is odd, we let $\zeta \in \pi_{2k}(SO_{2k})$ be a generator and note that $\pi_{2k}(SO) = 0$.
When $k$ is even, the stabilisation homomorphism $S \colon \pi_{2k}(SO_{2k}) \to \pi_{2k}(SO)$ is split 
onto, where $\pi_{2k}(SO) = 0$ if $k \equiv 2$ mod~$4$ and $\Z/2$ if $k \equiv 0$ mod~$4$.  
Moreover, for all $k$ there is a short exact sequence
\begin{equation} \label{eq:pi2kSO2k}
0 \to \Z/2(\tau_{2k}\eta_{2k-1}) \to \pi_{2k}(SO_{2k}) \xra{\mathrm{ev_*} \oplus S} \pi_{2k}(S^{2k-1}) 
\oplus \pi_{2k}(SO) \to 0,
\end{equation}
where $\mathrm{ev} \colon SO_{2k} \to S^{2k-1}$ is given by evaluation at a point in $S^{2k-1}$
and $\eta_{2k-1} \colon S^{2k} \to S^{2k-1}$ is essential.
The sequence \eqref{eq:pi2kSO2k} is non-split when $k \geq 5$ is odd and splits when $k$ is even.

\begin{theorem} \label{t:to_S2k}
The turning obstructions $\TO_\pm \colon \pi_{2k-1}(SO_{2k}) \to \pi_{2k}(SO_{2k})$ satisfy the following:
\begin{compactenum}[a)]
\item If $k$ is odd then, $\TO_\pm(\xi)  = \mathrm{e}(\xi)\zeta$;
\item If $k \equiv 2$ mod $4$ then $\mathrm{ev}_*(\TO_\pm(\tau_{2k})) = 1$, $\TO_+(\sigma_{2k}) = 0$ and $\TO_-(\sigma_{2k}) = \mathrm{e}(\sigma_{2k})\TO_-(\tau_{2k})$;
\item If $k \equiv 0$ mod $4$ then $\mathrm{ev}_*(\TO_\pm(\tau_{2k})) = 1$, $S(\TO_\pm(\tau_{2k})) = 0$ and $S(\TO_\pm(\sigma_{2k})) = 1$; in particular $\TO_\pm \otimes \id_{\Z/2}$ is injective.
\end{compactenum}
In particular, if $k$ is odd then $\TO_\eta= \TO_+ - \TO_- = 0$. If $k = 2j$ is even then $\TO_\eta$ satisfies the following:
\begin{compactenum}[a)] \setcounter{enumi}{3}
\item $\TO_\eta(\tau_{4j})= \tau_{4j}\eta_{4j-1} \neq 0$;
\item If $j = 1$ or $2$, then $\mathrm{ev}_*(\TO_\eta(\sigma_{4j})) =  1$ and $\TO_\eta \otimes \id_{\Z/2}$ is injective;
\item If $j \geq 3$, then $\TO_\eta(\sigma_{4j}) = 0$.
\end{compactenum}
\end{theorem}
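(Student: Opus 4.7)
The plan is to exploit that $\TO_+$, $\TO_-$ and $\TO_\eta$ are each group homomorphisms (Lemma~\ref{l:to+bto}) from $\pi_{2k-1}(SO_{2k}) \cong \Z\{\tau_{2k}\} \oplus C\{\sigma_{2k}\}$, so the theorem reduces to evaluating them on the two generators $\tau_{2k}$ and $\sigma_{2k}$. Assertions (d)--(f) on $\TO_\eta$ will follow formally from (a)--(c) via the identity $\TO_\eta = \TO_- - \TO_+$, obtained by taking adjoints of $\ol{\To}_{\bar\beta} = \ol{\To}_\eta + \ol{\To}_\beta$ (Theorem~\ref{thm:gpd-appl}(b) applied to $\bar\beta = \eta \ast \beta$) and invoking Lemma~\ref{l:to+bto}(a). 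To pin down each value in $\pi_{2k}(SO_{2k})$ I would use the short exact sequence~\eqref{eq:pi2kSO2k} and compute the three invariants $\mathrm{ev}_*$, $S$, and the $\tau_{2k}\eta_{2k-1}$-kernel component separately in each parity case.

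For the stable generator $\sigma_{2k}$ ($k \neq 2, 4$), the hypothesis that $C(\sigma_{2k})$ lies in the image of stabilisation from $\pi_{2k-1}(SO_{2k-2})$, combined with Lemma~\ref{l:to+stab}, immediately yields $\TO_+(\sigma_{2k}) = \TO_-(\sigma_{2k})$ and hence $\TO_\eta(\sigma_{2k}) = 0$; this already settles (f) and the odd-$k$ piece of (a) for $\sigma_{2k}$. Picking the preimage in $SO_{2k-2} \subset SO_{2k}$ to act trivially on a coordinate axis yields $\mathrm{ev}_*(\TO_\pm(\sigma_{2k})) = 0$ via a direct fixed-point argument in the formula $[x,t] \mapsto g(x)\beta(t)g(x)^{-1} e_j$. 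For $k \equiv 0 \pmod 4$, the generator $S(\sigma_{2k})$ of $\pi_{2k-1}(SO) = \Z$ is not in the image of the forgetful map $\pi_{2k-1}(U) \to \pi_{2k-1}(SO)$ (which has index $2$), so by Theorem~\ref{t:1} it is not stably turnable, giving $S(\TO_\pm(\sigma_{2k})) = 1 \in \pi_{2k}(SO) = \Z/2$ as in (c). For $k \equiv 2 \pmod 4$, the generator is stably complex, and a direct construction of a $\beta$-turning at the rank-$2k$ level on a complex representative gives $\TO_+(\sigma_{2k}) = 0$; the formula $\TO_-(\sigma_{2k}) = \mathrm{e}(\sigma_{2k}) \TO_-(\tau_{2k})$ then follows from linearity.

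For the tangent generator $\tau_{2k}$, I would compute $\mathrm{ev}_*(\TO_\pm(\tau_{2k}))$ from the explicit formula
\[ [x, t] \mapsto \tau_{2k}(x)\, \beta(t)\, \tau_{2k}(x)^{-1}\, e_1, \]
using $\mathrm{ev} \circ \tau_{2k} = \id_{S^{2k-1}}$ to analyse the resulting map $S^{2k} \to S^{2k-1}$ as a Hopf-construction-style representative in $\pi_{2k}(S^{2k-1}) = \Z/2$; the stabilisation $S(\TO_\pm(\tau_{2k})) = 0$ follows because $S(\tau_{2k}) = 0$ (the sphere is stably parallelisable). For $k \notin \{1, 3\}$, Kirchoff's Theorem~\ref{t:2} combined with the Bott--Milnor criterion for triviality of $TS^{2k+1}$ forces $\TO_\pm(\tau_{2k}) \neq 0$; assembling these data in each parity case yields the $\mathrm{ev}_*(\TO_\pm(\tau_{2k})) = 1$ assertions of (b) and (c), and for $k$ odd $\geq 5$ the formula $\TO_\pm(\tau_{2k}) = 2\zeta = \mathrm{e}(\tau_{2k})\zeta$ via the unique order-$2$ element of $\pi_{2k}(SO_{2k}) = \Z/4$, completing (a).

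The hardest step is (d), the identity $\TO_\eta(\tau_{4j}) = \tau_{4j}\eta_{4j-1} \neq 0$, which requires distinguishing $\TO_+$ from $\TO_-$ on $\tau_{4j}$ in the $\tau_{4j}\eta_{4j-1}$-direction, a direction invisible to both $\mathrm{ev}_*$ and $S$. My plan is to choose $\bar\beta$ as the pointwise conjugate of $\beta$ by a fixed element of $O_{2k} \setminus SO_{2k}$, explicitly compute the difference $\ol{\To}_{\bar\beta}(\tau_{4j}) - \ol{\To}_\beta(\tau_{4j})$ and identify it with $\tau_{4j}\eta_{4j-1}$, using the explicit rank-$4$ over $S^4$ calculations developed in Section~\ref{ss:S4} as a benchmark. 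For the low-rank part of (e) with $j = 1, 2$, a hands-on computation using the same Section~\ref{ss:S4} input and the known structure of $\pi_*(SO_n)$ in this range should simultaneously establish $\mathrm{ev}_*(\TO_\eta(\sigma_{4j})) = 1$ and the claimed mod-$2$ injectivity of $\TO_\eta$.
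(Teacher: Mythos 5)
The central difficulty is that your logical architecture is inverted relative to what actually works, and the inversion is not harmless.

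You state at the outset that ``(d)--(f) on $\TO_\eta$ will follow formally from (a)--(c) via $\TO_\eta = \TO_- - \TO_+$.'' But (a)--(c) only record the values of $\mathrm{ev}_*$ and $S$ on $\TO_\pm(\tau_{2k})$ and $\TO_\pm(\sigma_{2k})$, and both of those invariants vanish on $\tau_{2k}\eta_{2k-1}$ (the $\Z/2$-kernel of $\mathrm{ev}_* \oplus S$ in \eqref{eq:pi2kSO2k}). So nothing in (a)--(c) determines the $\tau_{2k}\eta_{2k-1}$-component of $\TO_\eta(\tau_{2k})$, and (d) simply does not follow. You yourself notice this three paragraphs later when you call (d) ``the hardest step'' precisely because it lives in ``a direction invisible to both $\mathrm{ev}_*$ and $S$,'' but that admission contradicts your opening claim and leaves the real computation undone. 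Worse, the dependence goes the other way: the paper's proof of $\mathrm{ev}_*(\TO_\pm(\tau_{2k})) = 1$ in parts (b) and (c) \emph{uses} part (d), via Kirchoff's theorem (giving $\TO_\pm(\tau_{2k}) \neq 0$), the relation $\TO_-(\tau_{2k}) = \TO_+(\tau_{2k}) + \TO_\eta(\tau_{2k}) = \TO_+(\tau_{2k}) + \tau_{2k}\eta_{2k-1}$, and the fact that $\mathrm{Ker}(\mathrm{ev}_*)$ is generated by $\tau_{2k}\eta_{2k-1}$: if both $\TO_+(\tau_{2k})$ and $\TO_-(\tau_{2k})$ were in the kernel, one of them would have to be $0$. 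Your proposed alternative --- computing $\mathrm{ev}_*(\TO_\pm(\tau_{2k}))$ directly from the clutching formula as a ``Hopf-construction-style representative'' --- is plausible in outline but is not a proof; the identification of the resulting element of $\pi_{2k}(S^{2k-1}) = \Z/2$ is exactly the kind of delicate computation the theorem is about, and you give no mechanism for carrying it out.

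The missing mechanism, and the actual engine of the paper's argument, is the factorisation $\rho_\eta = L \circ p$ through the Stiefel manifold $V_{2k,2}$, together with the functional-Steenrod-square computation of $p_*(\tau_{2k})$, $L_*(1,0)$ and $L_*(0,1)$ in Lemma~\ref{l:p_*L_*}. That is what computes $\TO_\eta(\tau_{2k})$, $\TO_\eta(\sigma_{2k})$ and thereby pins down the $\tau_{2k}\eta_{2k-1}$-component --- the very piece your framework cannot see. Without this (or an explicit substitute for it), parts (d) and (e), and hence via the groupoid relation also the $\mathrm{ev}_*$ statements in (b) and (c), are unproved. There is also a smaller gap in your treatment of (a): showing $\TO_\eta(\sigma_{2k}) = 0$ via Lemma~\ref{l:to+stab} gives only $\TO_+(\sigma_{2k}) = \TO_-(\sigma_{2k})$, not that they vanish. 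You need the separate complex-structure argument (that $S(\sigma_{2k})$ is stably complex with $c_k = (k{-}1)!$, and $(k{-}1)!$ is divisible by $4$ for $k \geq 5$, so $\sigma_{2k} - \tfrac{(k-1)!}{2}\tau_{2k}$ admits a complex structure) to conclude $\TO_+(\sigma_{2k}) = 0$ for $k \equiv 1 \bmod 4$. Your stable non-complexity argument for (c) is correct, and the $k = 2$ case via the quaternionic double cover is the right isolated special case, but as written the proposal does not constitute a proof of the theorem.
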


\begin{remark} \label{r:schirality}
Theorem~\ref{t:to_S2k} shows that unless $k = 2$, for all $[g] \in \pi_{2k-1}(SO_{2k})$
we have $\TO_+([g]) = 0$ if and only if $\TO_-([g]) = 0$.
Hence for $k \neq 2$ rank-$2k$ bundles $E \to S^{2k}$ are either bi-turnable or not turnable
and so these bundles are not strongly chiral.
On the other hand, when $k = 2$, a bundle $E \to S^4$ is strongly chiral if and only if $\mathrm{e}(E)$ is odd.
\end{remark}

The remainder of this section is devoted to the proof of Theorem~\ref{t:to_S2k}.
In Section~\ref{ss:tau_2k} we consider the turnability of the tangent bundle
of the $2k$-sphere, which is an essential input to the proof.
In Section~\ref{ss:TOeta} we consider $\TO_{\eta} = \TO_+ - \TO_-$ 
and prove parts d)-f) of Theorem~\ref{t:to_S2k}.
In Section~\ref{ss:S4} 
we consider the exceptional case of the
$4$-sphere 
and in Section~\ref{ss:TO_S2k}
we assemble the previous work to prove parts a)-c) of Theorem~\ref{t:to_S2k}.

\subsection{Turning the tangent bundle of the $2k$-sphere}  \label{ss:tau_2k}
Let $TS^n$ denote the tangent bundle of the $n$-sphere.
We fix the standard orientation on the $n$-sphere and this orients $TS^n$.
In \cite{k47} Kirchoff proved that if $TS^{2k}$ admits a complex structure then
$TS^{2k+1}$ is trivial.  
Later \cite{bm58}, it was proven that $TS^{2k+1}$ is trivial if and only if $2k{+}1 = 1, 3$ or $7$.
Since elementary calculations show that $TS^2$ and $TS^6$ admit complex structures,
Kirchoff's theorem implies that $TS^{2k}$ admits a complex structure if and only 
if $TS^{2k+1}$ is trivial.
Here we prove a strengthening of Kirchoff's theorem, which only assumes that
$TS^{2k}$ is turnable.  

\begin{theorem}(Kirchoff's theorem for turnings) \label{t:SKirchoff}
If $TS^{2k}$ is turnable then $TS^{2k+1}$ is trivial. 
\end{theorem}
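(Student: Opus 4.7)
My plan is to use the turning $\psi_t$ of $TS^{2k}$ to construct a global trivialisation of $TS^{2k+1}$ directly. Parametrise $S^{2k+1} \subset \R^{2k+1} \times \R$ by $p(x', \alpha) = (\sin\alpha \cdot x', -\cos\alpha)$ with $x' \in S^{2k}$ and $\alpha \in [0, \pi]$, so that $\alpha = 0$ is the south pole and $\alpha = \pi$ is the north pole. Away from the poles, the tangent space splits orthogonally as
\[
T_{p(x', \alpha)} S^{2k+1} \;=\; \{(v, 0) : v \in T_{x'} S^{2k}\} \;\oplus\; \R \cdot (\cos\alpha \cdot x', \sin\alpha),
\]
where $T_{x'} S^{2k}$ is identified with $(\R x')^\perp \subset \R^{2k+1}$.

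Starting from the standard basis $\{e_1, \ldots, e_{2k+1}\}$ at the south pole and parallel-transporting along the meridian through $x'$, a direct computation shows that the vector $e_i = \alpha_i x' + w_i$ (with $\alpha_i = e_i \cdot x'$ and $w_i \in T_{x'}S^{2k}$) is transported to $(w_i + \alpha_i \cos\alpha \cdot x', \alpha_i \sin\alpha)$ at $p(x', \alpha)$. At $\alpha = \pi$ this equals $(e_i - 2\alpha_i x', 0)$, the reflection of $e_i$ in $(\R x')^\perp$; the $x'$-dependence of this limit is precisely the obstruction to the parallel-transported frame extending continuously over the north pole, and reflects the non-triviality of $TS^{2k}$. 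The turning $\psi_t$ provides exactly the twist needed to cancel this obstruction. With $t(\alpha) := \alpha/\pi$, I define
\[
\tilde F_i(x', \alpha) := \bigl(\psi_{\alpha/\pi}(x')\, w_i + \alpha_i \cos\alpha \cdot x',\; \alpha_i \sin\alpha\bigr) \in T_{p(x', \alpha)} S^{2k+1}.
\]
Since $\psi_0(x') = \idbb$ and $\psi_1(x') = -\idbb$, at $\alpha = 0$ one has $\tilde F_i = e_i$ and at $\alpha = \pi$ one has $\tilde F_i = -e_i$, both independent of $x'$, so $\tilde F_i$ descends to a continuous vector field on all of $S^{2k+1}$. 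In the splitting above, $\tilde F_i$ corresponds to $(\psi_{\alpha/\pi}(x')\, w_i, \alpha_i)$, obtained from the linearly independent family $\{(w_i, \alpha_i)\}$ by a linear isomorphism on the first summand, so $\tilde F_1, \ldots, \tilde F_{2k+1}$ are everywhere linearly independent and thus trivialise $TS^{2k+1}$.

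The conceptual content of the proof is that a homotopy from $\idbb$ to $-\idbb$ in $\Aut(TS^{2k})$ is exactly the data required to reconcile the two $x'$-dependent parallel-transport trivialisations at the north and south poles of $S^{2k+1}$. The main thing I would need to verify carefully is the explicit formula for parallel transport along the meridians and the decomposition of $\tilde F_i$ in the tangent-space splitting; once these are in hand, continuity at the poles and positive orientation (automatic from continuity and the outward-normal convention) fall out. I expect this bookkeeping to be the only nontrivial technical obstacle.
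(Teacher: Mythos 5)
Your proof is correct, and despite the difference in presentation it is essentially the same argument as the paper's. The paper cites the standard clutching function $c_{2k+1}$ for $TS^{2k+1}$, which sends $x'\in S^{2k}$ to the reflection in $\an{x'}^\perp$, and writes down a homotopy $H\bigl(((w,y),x'),t\bigr)=\bigl((-w,\alpha_t(y)),x'\bigr)$ that deforms $c_{2k+1}$ to the constant map $-\idbb$; this null-homotopy then abstractly trivialises $TS^{2k+1}$. You rederive the same reflection as the north-pole defect of the meridian parallel-transport frame, and then splice the turning in as $\alpha$ runs along the meridians, producing an explicit global frame $\tilde F_i$ rather than passing through a null-homotopy of the clutching function. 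In both versions the core computation is identical: decompose $\R^{2k+1}\cong\an{x'}\oplus\an{x'}^\perp$, keep the sign flip on $\an{x'}$ (encoded by $-w$ in the paper, by the $\alpha_i\cos\alpha\cdot x'$ term with the degenerating polar coordinate in yours), and use the turning to carry $\idbb$ to $-\idbb$ on the $\an{x'}^\perp$ factor. Your route is a little more self-contained since it does not quote the formula for the clutching function of $TS^m$, at the cost of a slightly heavier parallel-transport calculation, but it carries the same conceptual content.
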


\begin{corollary} \label{c:SKirchoff}
$TS^{2k}$ is turnable if and only if it admits a complex structure if and only 
if $2k = 2$ or $6$. \qed
\end{corollary}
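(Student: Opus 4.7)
The plan is to exhibit the corollary as a loop of three implications, each of which is essentially immediate from results already available: the newly-proved Theorem~\ref{t:SKirchoff}, the classical Bott--Milnor theorem on parallelisability of spheres, and the classical Borel--Serre result (or elementary constructions) producing complex structures on $S^2$ and $S^6$.

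First I would dispose of the easiest implication, namely that if $TS^{2k}$ admits a complex structure $J$, then $TS^{2k}$ is turnable. This is already observed in Section~\ref{ss:turnings}: the path $\psi_t := e^{i\pi t}\idbb_{TS^{2k}}$, defined using the complex structure $J$, is a continuous path of bundle automorphisms from $\idbb_{TS^{2k}}$ to $-\idbb_{TS^{2k}}$. So complex $\Rightarrow$ turnable requires no further argument.

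Next, I would combine Theorem~\ref{t:SKirchoff} with the Bott--Milnor theorem~\cite{bm58} to get turnable $\Rightarrow$ $2k \in \{2,6\}$. Explicitly: if $TS^{2k}$ is turnable, then Theorem~\ref{t:SKirchoff} gives that $TS^{2k+1}$ is trivial. By \cite{bm58}, $TS^n$ is trivial only when $n \in \{1,3,7\}$, so $2k+1 \in \{1,3,7\}$ and hence $2k \in \{0,2,6\}$. The case $2k = 0$ is excluded by the standing convention that $k \geq 1$, leaving $2k \in \{2,6\}$.

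Finally, to close the loop with $2k \in \{2,6\} \Rightarrow$ complex, I would invoke the classical fact that both $S^2$ and $S^6$ carry complex (respectively, almost complex) structures: $S^2 \cong \mathbb{CP}^1$ gives the complex structure on $TS^2$ directly, while the almost complex structure on $TS^6$ arises from identifying $S^6$ with the unit imaginary octonions in $\Oct$ and using octonion multiplication. Both constructions (and the fact that these are the only spheres admitting almost complex structures) are due to Borel--Serre~\cite{b-s51} and are recalled in the introduction. No step here is an obstacle --- the entire technical content of the corollary has been absorbed into Theorem~\ref{t:SKirchoff}, and this proof is simply a two-line bookkeeping argument that assembles it with two well-known classical theorems.
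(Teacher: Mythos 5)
Your proposal is correct and follows exactly the route the paper intends: complex $\Rightarrow$ turnable is the standard observation about complex bundles, turnable $\Rightarrow$ $2k\in\{2,6\}$ comes from Theorem~\ref{t:SKirchoff} plus Bott--Milnor, and the complex structures on $TS^2$ and $TS^6$ close the loop. This matches the discussion preceding the corollary in Section~\ref{ss:tau_2k}, which is why the paper marks it \qed.
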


\begin{proof}[Proof of Theorem~\ref{t:SKirchoff}]
We first recall the following well-known definition of a clutching
function $c_m$ for $TS^m$; see~\cite[Ch.\ 8, Corollary 9.9]{h94}.
  Given $x \in S^{m-1}$, write $\R^m = \an{x} \oplus \an{x}^\perp$ 
as the sum of the line spanned by $x$ and its orthogonal complement
and write $v \in \R^m$ as $v = (w, y)$ where $w \in \an{x}$ and $y \in \an{x}^\perp$.
Let $c_m: S^{m-1} \to O_m$ be the function which assigns to $x \in S^{m-1}$ 
the reflection to the hyperplane orthogonal to $x$:
\[ c_m(x): \R^m \to \R^m, 
\quad
(w, y) \mapsto (-w, y) \]

Suppose that $TS^{2k}$ is turnable. 
We will show that the clutching function 
$c_{2k+1}:S^{2k} \to O_{2k+1}$ is null-homotopic, proving that 
$TS^{2k+1}$ is trivial. 
Using the notation above, we see that 
\[ TS^{2k} = \{((0, y), x) \mid y \in \an{x}^\perp \} \subset \R^{2k+1} \times S^{2k}.\]
Since $TS^{2k}$ is turnable, there exists a turning $\alpha_t$ on $TS^{2k}$ with
$\alpha_0 = \idbb$ and $\alpha_1 = -\idbb$.
We use $\alpha_t$ to define the following homotopy of automorphisms of the
trivial bundle
\[ H : (\R^{2k+1} \times S^{2k}) \times I \to \R^{2k+1} \times S^{2k},
\quad \bigl( \bigl( (w, y), x \bigr), t \bigr) \mapsto \bigl((-w, \alpha_t(y)), x \bigr).
 \]
We see that $H_0 = c_{2k+1}$ and $H_1 = -\idbb$.  
Hence $H$ is the required homotopy of clutching functions from $c_{2k+1}$ to a
constant map.
\end{proof}

\subsection{The turning obstruction $\TO_{\eta}$}  \label{ss:TOeta}
In this subsection we compute $\TO_{\eta} \colon \pi_{2k-1}(SO_{2k}) \to
\pi_{2k}(SO_{2k})$ and prove parts d)-f) of Theorem~\ref{t:to_S2k}.
The computation of $\TO_\eta$ requires some preliminaries.

For any $m \geq 2$, let $V_{m, 2}$ be the Stiefel manifold of ordered pairs of orthonormal
vectors in $\R^{m}$.  
Given $\ul v = (v_1, v_2) \in V_{m, 2}$ we define $V = \an{v_1, v_2}$ and write 
$x \in \R^m$ as $x = (v, w)$ where $v \in V$ and $w \in V^\perp$.
The isomorphism $\C \rightarrow V$, $1 \mapsto v_1$, $i \mapsto v_2$ defines a complex structure on $V$. 
We define $\gamma_{\ul v}$ in $\Omega SO_m$ by
\[ \gamma_{\ul v}(t)(v, w) = (e^{2 \pi it}v, w) \]
and we define the map
\[ L = L_m \colon V_{m, 2} \to \Omega SO_m,
\quad \ul v \mapsto \gamma_{\ul v}.\]
Next we consider the canonical projection $p \colon SO_m \to V_{m, 2}$ and the composition
\[ L \circ p \colon SO_m \to \Omega SO_m.\]
It is clear from the definitions that $L \circ p$ is the map $\rho_{\eta}$ of 
Definition~\ref{d:gto} a), so after the identification $\pi_{m-1}(\Omega SO_m) = \pi_m(SO_m)$ we obtain the following

\begin{lemma} \label{l:L_circ_p}
For all $[g] \in \pi_{m-1}(SO_m)$, $(L \circ p)_*([g]) = \TO_\eta([g])$. \qed
\end{lemma}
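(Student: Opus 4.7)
The plan is to verify that the composition $L \circ p \colon SO_m \to \Omega SO_m$ coincides on the nose with the map $\rho_\eta$ of Definition~\ref{d:gto}(a) for a carefully chosen representative of the generator $\eta \in \pi_1(SO_m)$, and then to translate the resulting identity through the adjunction $\pi_{m-1}(\Omega SO_m) \cong \pi_m(SO_m)$.

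First I would fix as the representative of the generator of $\pi_1(SO_m) \cong \Z/2$ the loop $\eta \colon I \to SO_m$ whose value $\eta(t)$ rotates the coordinate plane $\langle e_1, e_2 \rangle$ by angle $2\pi t$ and fixes the orthogonal complement $\langle e_3, \ldots, e_m \rangle$ pointwise. This is the standard generator obtained by pushing forward the canonical generator of $\pi_1(SO_2)$ along the inclusion $SO_2 \hookrightarrow SO_m$.

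Next I would compute $A \eta(t) A^{-1}$ for an arbitrary $A \in SO_m$. Conjugation by $A$ sends the plane $\langle e_1, e_2 \rangle$ to the oriented plane spanned by the ordered pair $(A e_1, A e_2)$ and sends its orthogonal complement to the orthogonal complement of that plane. It follows that $A \eta(t) A^{-1}$ is the rotation by angle $2\pi t$ in the oriented plane $\langle A e_1, A e_2 \rangle$ (with complex structure determined by $Ae_1 \mapsto 1$, $Ae_2 \mapsto i$) and acts as the identity on the orthogonal complement. Since by definition $p(A) = (A e_1, A e_2) \in V_{m,2}$, this is exactly the description of $\gamma_{p(A)}$ given just before the lemma. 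Hence $\rho_\eta(A) = L(p(A))$ for every $A \in SO_m$; that is, $\rho_\eta = L \circ p$ as maps $SO_m \to \Omega SO_m$.

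Taking induced maps on $\pi_{m-1}$ then gives $(L \circ p)_* = (\rho_\eta)_* = \To_\eta$ as homomorphisms $\pi_{m-1}(SO_m) \to \pi_{m-1}(\Omega SO_m)$, and the standard adjunction isomorphism $\pi_{m-1}(\Omega SO_m) \cong \pi_m(SO_m)$ transports $\To_\eta$ to $\TO_\eta$ by Definition~\ref{def:adj-to}, proving the claim. No substantive obstacle is expected: once the representative of $\eta$ is chosen, the content of the lemma is a direct unpacking of the definitions of $L$, $p$, and $\rho_\eta$, which is why the statement admits the one-line justification given in the text preceding it.
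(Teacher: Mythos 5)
Your proof is correct and follows exactly the paper's implicit argument: the paper simply asserts "It is clear from the definitions that $L \circ p$ is the map $\rho_{\eta}$ of Definition~\ref{d:gto} a)" and lets the reader fill in the pointwise verification you carry out. Choosing $\eta$ as the $2\pi t$-rotation in the $\langle e_1, e_2\rangle$-plane, computing $A\eta(t)A^{-1}$, and matching it against $\gamma_{p(A)}$ via $p(A)=(Ae_1,Ae_2)$ is precisely the intended justification, after which the adjunction identification $\pi_{m-1}(\Omega SO_m)\cong\pi_m(SO_m)$ gives the stated formula.
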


\begin{remark} \label{r:SPmethod}
Combining Lemmas~\ref{l:L_circ_p}, \ref{l:to+bto} and~\ref{l:TO_and_SP}, we get 
$(L \circ p)_*([g]) = \TO_\eta([g]) = \ol{\TO}_\eta([g]) = \an{[g], \eta}$
and this equation can be generalised to give a method for computing
similar Samelson products as follows.

For $2 \leq i \leq m$, 
let $V_{m, i}$ denote the Stiefel manifold of mutually orthonormal ordered $i$-tuples $\ul v = (v_1, \dots, v_i)$ 
of vectors in $\R^m$, set $V = \an{v_1, \dots, v_i} = \R^i$ and write
$x \in \R^m$ as $x = (v, w)$, where $v \in V$ and $w \in V^\perp$.
Then given any map $\alpha \colon S^{i-1}\to SO_{i}$ we define
$\alpha_{\ul v} \in \Omega^{i-1}SO_m$, the $(i{-}1)$-fold based loop space of $SO_m$, by
\[ \alpha_{\ul v}(s)(v, w) = (\alpha(s) v, w), \]
for all $s \in S^{i-1}$ and $(v, w) \in \R^m$.  Allowing $\ul v$ to vary, we obtain the map
\[ L(\alpha) \colon V_{m, i} \to \Omega^{i-1}SO_m, \quad
 \ul v \mapsto \alpha_{\ul v}, \]
and note that $L \colon V_{m, 2} \to \Omega SO_m$ above is $L(\alpha)$ for the special case
of $\alpha \colon S^1 \to SO_2 = U(1), t \mapsto e^{2 \pi i t}$.
If $\iota \colon SO_i \to SO_m$ denotes the standard inclusion, and $p \colon SO_m \to V_{m, i}$
the standard projection, then after the identification $\pi_j(\Omega^{i-1}SO_m) = \pi_{i+j-1}(SO_m)$,
a higher dimensional version of Lemma~\ref{l:to+bto} leads to
the equation
\[ (L(\alpha) \circ p)_*([g]) = \An{[g], \iota_*([\alpha])} \]
for all $[g] \in \pi_j(SO_m)$.
\end{remark}

Now we consider the case $m = 2k$ and 
the induced homomorphisms 
$p_* \colon \pi_{2k-1}(SO_{2k}) \to \pi_{2k-1}(V_{2k, 2})$
and $L_* \colon \pi_{2k-1}(V_{2k, 2}) \to \pi_{2k}(SO_{2k})$.
Let $\mathrm{ev} \colon SO_{2k} \to S^{2k-1}$ be the map defined by
evaluation at a point in $S^{2k-1}$.

\begin{definition}
Define $a_k \in \Z$ by $a_k := 1$ if $k$ is even and $a_k := 2$ if $k$ is odd.
\end{definition}

\begin{lemma} \label{l:p_*L_*}
For any isomorphism $\pi_{2k-1}(V_{2k, 2}) \to \Z/2 \oplus \Z$
we have:
\begin{compactenum}[a)]
\item $p_*(\tau_{2k}) = (\rho_2(a_k), 2)$;
\item $L_*(1, 0) = \tau_{2k}\eta_{2k-1}$;
\item $\mathrm{ev}_*(L_*(0, 1)) = \rho_2(a_k)$.
\end{compactenum}
\end{lemma}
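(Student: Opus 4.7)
The plan is to split $\pi_{2k-1}(V_{2k,2})$ geometrically and then compute each of the three assertions on the resulting summands. Consider the fibration $S^{2k-2} \xrightarrow{\iota} V_{2k,2} \xrightarrow{q_1} S^{2k-1}$, where $q_1$ projects a frame onto its first vector. Using $\R^{2k} = \C^k$ and the standard complex structure $J$, the assignment $v \mapsto Jv$ gives a nowhere-zero tangent field on $S^{2k-1}$, hence a section $s \colon S^{2k-1} \to V_{2k,2}$, $v \mapsto (v, Jv)$, of $q_1$. The associated long exact sequence then splits, giving $\pi_{2k-1}(V_{2k,2}) \cong \pi_{2k-1}(S^{2k-2}) \oplus \pi_{2k-1}(S^{2k-1}) \cong \Z/2 \oplus \Z$ (for $k \geq 3$), with the $\Z/2$ summand generated by $\iota_*(\eta_{2k-2})$ and the $\Z$ summand by $s_*(1)$.

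For part (a), the second coordinate of $p_*(\tau_{2k})$ is $(q_1 \circ p)_*(\tau_{2k}) = \mathrm{ev}_*(\tau_{2k}) = \chi(S^{2k}) = 2$, since $q_1 \circ p$ is exactly the evaluation $\mathrm{ev} \colon SO_{2k} \to S^{2k-1}$ and $\tau_{2k}$ is the clutching function of $TS^{2k}$. The first coordinate is detected by the class $p_*(\tau_{2k}) - 2 s_*(1)$, which lies in the $\Z/2$ summand; I would identify it via the secondary obstruction to $TS^{2k}$ admitting a $2$-field compatible with the complex structure $J$, which can be computed from an explicit representative of $\tau_{2k}$ together with the comparison to $s$. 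This mod-$2$ secondary invariant evaluates to $\rho_2(a_k) \cdot \iota_*(\eta_{2k-2})$, with the parity of $k$ entering through how the clutching function interacts with the chosen complex structure.

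For part (b), writing $(1, 0) = \iota_*(\eta_{2k-2})$ and using naturality of the adjunction $[X, \Omega Y] \cong [SX, Y]$, we obtain $L_*(1, 0) = [\widetilde{L\iota} \circ \eta_{2k-1}] \in \pi_{2k}(SO_{2k})$, where $\widetilde{L\iota} \colon S^{2k-1} \to SO_{2k}$ sends $(v_2, t)$ to the rotation by $2\pi t$ in $\langle v_1, v_2 \rangle$ (identity on the orthogonal complement) and we have used $\Sigma \eta_{2k-2} = \eta_{2k-1}$. A direct degree count gives $\mathrm{ev}_*(\widetilde{L\iota}) = 2$, so $\widetilde{L\iota} \equiv \tau_{2k}$ in $\pi_{2k-1}(SO_{2k})$ modulo $C(\sigma_{2k}) = \ker(\mathrm{ev}_*)$; since any residual element lies in the desuspended subgroup $C(\sigma_{2k})$, it is killed by precomposition with $\eta_{2k-1}$, yielding $L_*(1, 0) = \tau_{2k}\eta_{2k-1}$. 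For part (c), since $(0, 1) = s_*(1)$, the class $L_*(0, 1)$ has adjoint $\widetilde{Ls} \colon S^{2k} \to SO_{2k}$, $(v, t) \mapsto R_{\langle v, Jv\rangle}(2\pi t)$; composing with $\mathrm{ev}$ at a basepoint $w_0$ and decomposing $w_0$ orthogonally with respect to $\langle v, Jv\rangle$, the resulting map $S^{2k} \to S^{2k-1}$ is, up to homotopy, the Hopf construction on $v \mapsto (\langle w_0, v \rangle, \langle w_0, Jv \rangle) \in \R^2$. Its class in $\pi_{2k}(S^{2k-1}) \cong \Z/2$ is $\rho_2(a_k)$ via a standard Hopf-invariant calculation, the parity of $k$ again controlling the answer through the orientation behaviour of $J$ on the complex line through $w_0$. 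The main obstacle throughout is the parity factor $\rho_2(a_k)$ appearing in parts (a) and (c): it arises from a secondary (Hopf-type) invariant that is genuinely sensitive to the parity of $k$, and extracting it requires careful tracking of orientations rather than purely formal manipulation.
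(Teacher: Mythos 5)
Your overall skeleton (split $\pi_{2k-1}(V_{2k,2})$ via the section $s(v)=(v,Jv)$, reduce (a) to a mod-$2$ secondary invariant, handle (b) by first locating $L_*[\iota_{2k-2}]$ and then precomposing with $\eta_{2k-1}$, and read off (c) from a Hopf-type invariant) is the right shape and matches the paper's strategy. But there are two real problems.

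\textbf{Part (b) contains an error.} You argue that $\widetilde{L\iota}$ agrees with $\tau_{2k}$ modulo $\ker(\mathrm{ev}_*)$ and then claim that any residual element of $\ker(\mathrm{ev}_*)$ is killed by precomposition with $\eta_{2k-1}$. This is false. Take $k=4$: then $\pi_7(SO_8)\cong\Z(\tau_8)\oplus\Z(\sigma_8)$ with $\mathrm{ev}_*(\tau_8)=2$, $\mathrm{ev}_*(\sigma_8)=1$, so $\ker(\mathrm{ev}_*)$ is generated by $\tau_8-2\sigma_8$. Since $\pi_8(SO_8)$ is $2$-torsion and $S(\sigma_8)\cdot\eta\neq 0$ in $\pi_8(SO)\cong\Z/2$, we get $(\tau_8-2\sigma_8)\circ\eta_7 = \tau_8\eta_7 \neq 0$. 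More generally, whenever $k\equiv 0\pmod 4$ the stable composition $\sigma_{2k}\circ\eta_{2k-1}$ is nonzero (multiplication by $\eta\colon\pi_{2k-1}(SO)\to\pi_{2k}(SO)$ is onto), so a residual element need not die, and indeed could flip your answer from $\tau_{2k}\eta_{2k-1}$ to $0$. The point is that knowing $\mathrm{ev}_*(\widetilde{L\iota})=2$ is not enough; one also needs to pin down the stable class. The paper does exactly this: it shows $S(L_*[\iota_{2k-2}])=0$ by comparing $L_{2k}$ with $L_{2k+2}$ through $V_{2k,2}\to V_{2k+2,2}$ and using $\pi_{2k-2}(V_{2k+2,2})=0$, and then concludes $L_*[\iota_{2k-2}]=\tau_{2k}$ exactly, since $\tau_{2k}$ is the unique element with $\mathrm{ev}_*=2$ and vanishing stabilisation.

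\textbf{Part (a) has a gap.} Your second coordinate argument ($\mathrm{ev}_*(\tau_{2k})=\chi(S^{2k})=2$) is correct. But for the first coordinate you state that ``this mod-$2$ secondary invariant evaluates to $\rho_2(a_k)$'' without carrying out the computation; the parity dependence on $k$ is precisely the nontrivial content and cannot be waved at. The paper computes it concretely: the class $(p\circ\tau_{2k})-2s_*(1)$ is detected by the functional Steenrod square $Sq^2_x(p\circ\tau_{2k})$ for $x\in H^{2k-2}(V_{2k,2};\Z/2)$, and this is evaluated by factoring $\tau_{2k}$ through the double cover $q\colon S^{2k-1}\to\R P^{2k-1}$ and using that $q$ is the attaching map of the top cell of $\R P^{2k}$, which gives $Sq^2_x(p\circ\tau_{2k}) = Sq^2(t^{2k-2}) = \rho_2(a_k)$. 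A similar functional-$Sq^2$ computation through the Hopf map $H\colon S^{2k-1}\to\C P^{k-1}$ is what produces $\rho_2(a_k)$ in part (c); your Hopf-construction description there is in the right spirit but, like (a), stops short of an actual calculation of the $\Z/2$-valued invariant and would need to be turned into one of these functional-square arguments (or an equivalent) to be complete.
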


\begin{proof}
a) The sequence $\pi_{2k-1}(S^{2k-2}) \to \pi_{2k-1}(V_{2k, 2}) \to \pi_{2k-1}(S^{2k-1})$ 
is split short exact, with $p_*(\tau_{2k})$ mapping to $2 \in \pi_{2k-1}(S^{2k-1}) = \Z$.
It follows that the map $f = p \circ \tau_{2k} \colon S^{2k-1} \to V_{2k,2}$ vanishes on mod $2$
cohomology.  Also for $x \in H^{2k-2}(V_{2k,2}; \Z/2)$ the generator, $Sq^2(x) \in H^{2k}(V_{2k, 2}; \Z/2) = 0$
and so the functional Steenrod square $Sq^2_x(g)$ is defined for all maps $g \colon S^{2k-1} \to V_{2k, 2}$ which vanish on mod $2$ cohomology.  Moreover, $g = 2g'$ for some map $g' \colon S^{2k-1} \to V_{2k, 2}$
if and only if $Sq^2_x(g) = 0$.

Now the map $\tau_{2k} \colon S^{2k-1} \to SO_{2k}$ factors over the double covering
$q \colon S^{2k-1} \to \R P^{2k-1}$ and a map $\tau_{2k}' \colon \R P^{2k-1} \to SO_{2k-1}$.
Since $q$ vanishes on mod $2$ cohomology and $Sq^2(t^{2k-2}) = 0$, for
$t \in H^1(\R P^{2k-1}; \Z/2)$ a generator, it follows that the functional Steenrod square
$Sq^2_{t^{2k-2}}$ is defined on $q$.  We consider the composition
\[ S^{2k-1} \xra{~q~} \R P^{2k-1} \xra{~\tau'_{2k}~} SO_{2k} \xra{~p~} V_{2k, 2}. \]
A degree argument shows that the map $p \circ \tau'_{2k} \colon \R P^{2k-1} \to V_{2k, 2}$
satisfies $(p \circ \tau'_{2k})^*(x) = t^{2k-2}$ and naturality of functional Steenrod squares
gives that
\[ Sq^2_x(p \circ \tau_{2k}) = Sq^2_{t^{2k-2}}(q).\]
But $q$ is the attaching map of the top cell of $\R P^{2k}$ and so
$Sq^2_{t^{2k-2}}(q)= Sq^2(t^{4k}) = \rho_2(a_k)$.
Hence we have 
$Sq^2_x(p \circ \tau_{2k}) = \rho_2(a_k)$ and so $p_*(\tau_{2k}) = (\rho_2(a_k), 2) \in \pi_{2k-1}(V_{2k, 2})$.

b) Let $\iota_{2k-2} \colon S^{2k-2} \to V_{2k, 2}$ be the inclusion of a fibre of the projection
$V_{2k, 2} \to S^{2k-1}$.  Then we have $(1, 0) = [\iota_{2k-2} \circ \eta_{2k-2}] \in \pi_{2k-1}(V_{2k, 2})$.
Hence it suffices to prove that $L_*([\iota_{2k-2}])= \tau_{2k} \in \pi_{2k-1}(SO_{2k})$.
Now a degree argument shows that 
$\mathrm{ev}_*(L_*([\iota_{2k-2}])) = 1-(-1)^{2k-2} = 2 \in \pi_{2k-1}(S^{2k-1}) = \Z$
and we consider the following commutative diagram:
\[
\xymatrix{
V_{2k ,2} \ar[d] \ar[r]^(0.45){L_{2k}} &
\Omega SO_{2k} \ar[d]^S \\
V_{2k+2, 2} \ar[r]^(0.425){L_{2k+2}} &
\Omega SO_{2k+2}
}
\]
Since $\pi_{2k-2}(V_{2k+2,2}) = 0$, we see that $S(L_*([\iota_{2k-2}])) = 0$.
Hence $L_*([\iota_{2k-2}])$ is the clutching function of a stably trivial bundle
with Euler class $2$, so $L_*([\iota_{2k-2}]) = \tau_{2k}$, as required.

c) The standard complex structure on $\R^{2k} = \C^k$ defines a section 
$s \colon S^{2k-1} \rightarrow V_{2k, 2}$ 
of the projection $V_{2k, 2} \to S^{2k-1}$ by $s(v)=(v, iv)$.
Taking induced maps on $\pi_{2k-1}$ gives a splitting $\pi_{2k-1}(V_{2k, 2}) \cong \Z/2 \oplus \Z$,
where $[s] = (0, 1)$ and $[\iota_{2k-2} \circ \eta_{2k-2}] = (1, 0)$.
Since $\mathrm{ev}_*(L_*(1, 0)) = 0$, it suffices to prove $\mathrm{ev}_*(L_*([s])) = \rho_2(a_k)$.

It is clear that $L \circ s$ factors as the composition of the Hopf map $H \colon S^{2k-1} \to \C P^{k-1}$ and a map $L' \colon \C P^{k-1} \to \Omega SO_{2k}$. Another degree argument shows that the adjoint of $\Omega(\mathrm{ev}) \circ L' :\C P^{k-1} \to \Omega S^{2k-1}$ has degree one and so the homotopy class of $S^{2k-1} \to \Omega S^{2k-1}$ is determined by the functional Steenrod square $Sq^2_{z^{k-1}}(H)$, where $z \in H^2(\C P^{k-1}; \Z/2)$ is the generator. Since $H$ is the attaching map of the top cell of $\C P^k$, we have
\[Sq^2_{z^{k-1}}(H) = Sq^2(z^{k-1}) = \rho_2(a_k) \in \Z/2 \cong H^{2k}(\C P^k; \Z/2) \text{\,,} \]
which completes the proof of part c).
\end{proof}

\begin{lemma} \label{l:sigma2k}
If $k \neq 2, 4$, we may chose $\sigma_{2k} \in S(\pi_{2k-1}(SO_{2k-2})) \subset \pi_{2k-1}(SO_{2k})$.
\end{lemma}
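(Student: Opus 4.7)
My plan is to use the fibration $SO_{2k-2} \to SO_{2k} \xrightarrow{p} V_{2k,2}$, whose long exact sequence gives
\[
 \pi_{2k-1}(SO_{2k-2}) \xrightarrow{~S~} \pi_{2k-1}(SO_{2k}) \xrightarrow{~p_*~} \pi_{2k-1}(V_{2k,2}) \cong \Z/2 \oplus \Z,
\]
where the splitting on the right is the one from the proof of Lemma~\ref{l:p_*L_*}. Under this splitting, projection onto the $\Z$-summand is the Euler class $\mathrm{e}$. So it suffices to find a generator $\sigma_{2k}$ of $C(\sigma_{2k})$ with $p_*(\sigma_{2k}) = 0$, i.e.\ with both $\mathrm{e}(\sigma_{2k}) = 0$ and its $\Z/2$-component $y(\sigma_{2k})$ equal to zero.

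First I will arrange $\mathrm{e}(\sigma_{2k}) = 0$. By the Bott--Milnor theorem, for $k \neq 1, 2, 4$ the image of $\mathrm{e} \colon \pi_{2k-1}(SO_{2k}) \to \Z$ is $2\Z$, generated by $\mathrm{e}(\tau_{2k}) = \pm 2$. So given any initial $\sigma_{2k}$ with $\mathrm{e}(\sigma_{2k}) = 2m$, the modification $\sigma_{2k} - m\tau_{2k}$ has Euler class zero. Since $TS^{2k}$ is stably trivial, $S(\tau_{2k}) = 0$, so $S(\sigma_{2k} - m\tau_{2k}) = S(\sigma_{2k})$ is still a generator of $\pi_{2k-1}(SO)$, and $\sigma_{2k} - m\tau_{2k}$ remains a valid generator of a cyclic summand complementary to $\Z\tau_{2k}$.

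The main obstacle is then to kill $y(\sigma_{2k})$. By Lemma~\ref{l:p_*L_*}(a), $p_*(\tau_{2k}) = (\rho_2(a_k), 2)$, so replacing $\sigma_{2k}$ by $\sigma_{2k} + n\tau_{2k}$ shifts $(y, \mathrm{e})$ by $n(\rho_2(a_k), 2)$. When $k$ is even, $\rho_2(a_k) = 1$ allows flipping $y$ at the cost of changing $\mathrm{e}$ by $2$, and simultaneous vanishing of $\mathrm{e}$ and $y$ reduces to the parity condition $\mathrm{e}(\sigma_{2k})/2 \equiv y(\sigma_{2k}) \pmod 2$; I expect to verify this by interpreting $y$ as the secondary obstruction to two linearly independent sections of $E_{\sigma_{2k}}$ and comparing with Pontryagin class data. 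When $k$ is odd, $\rho_2(a_k) = 0$ and $y(\sigma_{2k})$ is a genuine invariant of $\sigma_{2k}$ modulo $\Z\tau_{2k}$, so I must show directly that it vanishes on a generator.

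For this last step I plan to argue by cases on $k \bmod 4$. When $k \equiv 3 \pmod 4$ we have $\pi_{2k-1}(SO) = 0$ by Bott periodicity, so $C(\sigma_{2k}) = 0$ and the lemma is vacuous. In the remaining cases ($k \equiv 0, 1, 2 \pmod 4$ with $k \neq 2, 4$) the strategy is to realise a generator of $\pi_{2k-1}(SO)$ by a bundle of rank at most $2k-2$, for example using complex or quaternionic structures on generators coming from $\pi_{2k-1}(SU)$ or $\pi_{2k-1}(Sp)$, or by invoking the low-rank realisation results for stable generators established in~\cite{k60}; such a lift gives an $\alpha \in \pi_{2k-1}(SO_{2k-2})$ whose image in $\pi_{2k-1}(SO_{2k})$ has the required properties, and we take $\sigma_{2k} := S(\alpha)$.
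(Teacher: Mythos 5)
Your reformulation via the Stiefel fibration $SO_{2k-2} \to SO_{2k} \xrightarrow{p} V_{2k,2}$ is correct: by exactness, $\sigma_{2k} \in S(\pi_{2k-1}(SO_{2k-2}))$ if and only if $p_*(\sigma_{2k}) = 0$. This is logically equivalent to the paper's formulation, which shows instead that the stabilisation $\pi_{2k-1}(SO_{2k-2}) \to \pi_{2k-1}(SO)$ is onto and then lifts a stable generator. Your handling of the $\Z$-component via $\tau_{2k}$-modification and the observation that $S(\tau_{2k}) = 0$ preserves the stable class are both correct. However, the proof contains genuine gaps at the decisive step: killing the $\Z/2$-component $y$.

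For $k$ even, you correctly derive that the lemma reduces to the parity congruence $\mathrm{e}(\sigma_{2k})/2 \equiv y(\sigma_{2k}) \pmod 2$, but then only state that you \emph{expect} to verify it via secondary obstruction and Pontryagin class arguments. That congruence is not obviously easier than the lemma itself; indeed, for $k$ even it is equivalent to the lemma, so assuming it is circular. For $k$ odd with $k \equiv 1 \pmod 4$ you observe that $y$ is a genuine invariant modulo $\Z\tau_{2k}$, and you must show it vanishes, but you defer this to ``complex or quaternionic realisations or the low-rank realisation results in \cite{k60}.'' That reference is Kervaire's paper, which supplies the computations of $\pi_{2k}(SO_{2k})$ and of $\pi_8(SO_8)$, $\pi_8(SO_9)$ used elsewhere in the paper, but not the surjectivity statement you need. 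The relevant input is the Barratt--Mahowald result \cite{bm64}, which gives surjectivity of $\pi_{2k-1}(SO_{2k-2}) \to \pi_{2k-1}(SO)$ only for $k \geq 7$; the boundary cases $k = 3, 5, 6$ still need to be handled separately. The paper's proof does exactly this: it invokes \cite{bm64} for $k \geq 7$, disposes of the cases where the stable group vanishes, and runs a separate long exact sequence argument through $SO_8 \to SO_9 \to S^8$ for $k = 5$. Your proposal contains none of these low-dimensional arguments, and the complex-structure realisation idea also needs care: any rank-$(k-1)$ complex bundle over $S^{2k}$ is classified by $\pi_{2k-1}(U_{k-1})$, which is not in the stable range, so realising the generator as a rank-$(2k-2)$ real bundle underlying a complex one is not automatic.

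In summary, your reduction is a sound reformulation of what must be proved, but the proposal stops short of proving it; the parity claim, the realisation claim, and the small-$k$ cases all remain to be established, and this is where the actual content of the lemma lies.
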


\begin{proof}
If $k \neq 2, 4$, then $\sigma_{2k} \in \pi_{2k-1}(SO_{2k})$ is such that $S(\sigma_{2k})$ generates
$\pi_{2k-1}(SO)$ and $\mathrm{e}(\sigma_{2k}) = 0$.
The map $\pi_{2k-2}(SO_{2k}) \to \pi_{2k-1}(SO)$ is onto for $k \geq 7$ by \cite{bm64},
which proves the lemma when $k \geq 7$.
For the remaining cases $k \in \{3, 5, 6\}$. If $k = 3, 6$, then $\pi_{2k}(SO) = 0$, so in these cases 
$\sigma_{2k} = 0$.
When $k = 5$ we consider part of the homotopy long exact sequence of the fibration $SO_8 \to SO_9 \to S^8$. 
\[ \pi_{9}(SO_8) \to \pi_9(SO_9) \to \pi_9(S^8) \to \pi_8(SO_8) \to \pi_8(SO_9)\]
By results of Kervaire \cite{k60}, $\pi_8(SO_8) \cong (\Z/2)^3$ and $\pi_8(SO_9) \cong (\Z/2)^2$.
Since $\pi_9(S^8) \cong \Z/2$, the boundary map $\pi_9(S^8) \to \pi_8(SO_8)$ is injective
and so $\pi_{9}(SO_8) \to \pi_9(SO_9)$ is onto.
Since $\mathrm{e}(\sigma_{10}) = 0$, $\sigma_{10} \in S(\pi_9(SO_9))$
and so $\sigma_{10} \in S(\pi_9(SO_8))$.
\end{proof}

\begin{lemma} \label{l:tospn-kodd}
If $k$ is odd, then $\TO_\eta([g])= 0$ for all $[g] \in \pi_{2k-1}(SO_{2k})$.
\end{lemma}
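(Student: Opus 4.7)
The plan is to invoke Lemma~\ref{l:L_circ_p}, which identifies $\TO_\eta$ with the induced homomorphism $(L \circ p)_* \colon \pi_{2k-1}(SO_{2k}) \to \pi_{2k}(SO_{2k})$, and then verify the vanishing on each of the two summands in the decomposition $\pi_{2k-1}(SO_{2k}) \cong \Z\langle \tau_{2k}\rangle \oplus C(\sigma_{2k})$. When $k = 1$ or $k = 3$ the target $\pi_{2k}(SO_{2k})$ is itself zero, so the only substantive case is $k \geq 5$ odd.

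For the $\sigma_{2k}$ summand I would use Lemma~\ref{l:sigma2k}: since an odd $k \geq 5$ satisfies $k \neq 2, 4$, we may represent $\sigma_{2k}$ by a map into the subgroup $SO_{2k-2} \subset SO_{2k}$. Because $SO_{2k-2}$ is by construction the stabilizer of the pair of vectors defining the basepoint of $V_{2k, 2}$, the composite $SO_{2k-2} \hookrightarrow SO_{2k} \xra{~p~} V_{2k, 2}$ is constant. Hence $p_*(\sigma_{2k}) = 0$ and therefore $(L \circ p)_*(\sigma_{2k}) = 0$.

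For the $\tau_{2k}$ summand, the oddness of $k$ forces $a_k = 2$ and hence $\rho_2(a_k) = 0$. Lemma~\ref{l:p_*L_*}(a) then yields $p_*(\tau_{2k}) = (0, 2)$ in the chosen splitting $\pi_{2k-1}(V_{2k, 2}) \cong \Z/2 \oplus \Z$, so that $(L \circ p)_*(\tau_{2k}) = 2\, L_*(0, 1)$. The strategy is now to show that $L_*(0, 1) \in \pi_{2k}(SO_{2k})$ is $2$-torsion by pushing it through the map $\mathrm{ev}_* \oplus S$ of the exact sequence~\eqref{eq:pi2kSO2k}. Lemma~\ref{l:p_*L_*}(c) supplies $\mathrm{ev}_*(L_*(0, 1)) = \rho_2(a_k) = 0$, while Bott periodicity gives $\pi_{2k}(SO) = 0$ for every odd $k$, so $S(L_*(0, 1)) = 0$ as well. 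Thus $L_*(0, 1)$ lies in the kernel $\Z/2\langle \tau_{2k}\eta_{2k-1}\rangle$ and has order at most two, so $2\, L_*(0, 1) = 0$ as required.

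The only mildly delicate point is the reduction $\sigma_{2k} \in S(\pi_{2k-1}(SO_{2k-2}))$ for every odd $k \geq 5$, but this is exactly Lemma~\ref{l:sigma2k}; the rest of the argument is a direct assembly of the computations in Lemma~\ref{l:p_*L_*} together with the classical vanishing of $\pi_n(SO)$ for $n \equiv 2 \pmod 4$.
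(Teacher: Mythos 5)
Your proof is correct and follows the same overall strategy as the paper's: reduce via Lemma~\ref{l:L_circ_p} to computing $(L\circ p)_*$, then handle $\tau_{2k}$ and $\sigma_{2k}$ separately using Lemma~\ref{l:p_*L_*} and Lemma~\ref{l:sigma2k}. You are somewhat more explicit than the paper in two places, and this is to your credit. First, the paper asserts ``by Lemma~\ref{l:p_*L_*} c), $L_*(0,2)=0$,'' but part~c) only gives $\mathrm{ev}_*(L_*(0,1))=\rho_2(a_k)=0$; you correctly supply the missing step, namely that $S(L_*(0,1))=0$ because $\pi_{2k}(SO)=0$ for $k$ odd, so $L_*(0,1)$ lies in the kernel $\Z/2\langle \tau_{2k}\eta_{2k-1}\rangle$ of the map in \eqref{eq:pi2kSO2k} and hence $2L_*(0,1)=0$. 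Second, the paper leaves implicit why $\sigma_{2k} \in S(\pi_{2k-1}(SO_{2k-2}))$ kills the turning obstruction; your observation that $p\colon SO_{2k}\to V_{2k,2}$ is constant on the subgroup $SO_{2k-2}$ (the stabilizer of the base $2$-frame) is exactly the right justification. So this is not a different route, but a more carefully spelled-out version of the paper's argument.
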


\begin{proof}
By Lemma~\ref{l:p_*L_*} a), $p_*(\tau_{2k}) = (0, 2)$
and by Lemma~\ref{l:p_*L_*} c), $L_*(0, 2) = 0$.  
Hence by Lemma~\ref{l:L_circ_p},
$\TO_\eta(\tau_{2k})= (L \circ p)_*(\tau_{2k}) = 0$.
If $k \equiv 1$~mod~$4$, then by Lemma~\ref{l:sigma2k},
$\sigma_{2k} \in S(\pi_{2k}(SO_{2k-2}))$ and so 
$\TO_\eta(\sigma_{2k}) = 0$.
\end{proof}

\begin{proof}[Proof of parts d)-f) of Theorem ~\ref{t:to_S2k}]
d) If $k = 2j$ is even, then $p_*(\tau_{4j}) = (1, 2)$ by by Lemma~\ref{l:p_*L_*} a)
and since $\pi_{4j}(SO_{4j})$ is a $2$-torsion group, $L_*(1, 2) = L_*(1, 0)$.
By  Lemma~\ref{l:p_*L_*} b), $L_*(1, 0) = \tau_{4j}\eta_{4j-1}$ and so
$\TO_\eta(\tau_{4j})= (L \circ p)_*(\tau_{4j}) = \tau_{4j}\eta_{4j-1}$.

e) If $j = 1, 2$, then $\mathrm{e}(\sigma_{4j}) = 1$ and so $p_*(\sigma_{4j}) = (\epsilon, 1)$.
Since $\mathrm{ev}_*(L_*(1, 0)) = 0$, Lemma~\ref{l:p_*L_*} c) ensures that
 $\mathrm{ev}_*(\TO_\eta(\sigma_{4j})) = \mathrm{ev}_*((\epsilon, 1)) = 1$.

f) If $j \geq 3$ then by by Lemma~\ref{l:sigma2k}$, \sigma_{4j} \in S(\pi_{4j}(SO_{4j-2}))$ and so 
$\TO_\eta(\sigma_{4j}) = 0$.
\end{proof}

\subsection{Rank-$4$ bundles over the $4$-sphere} \label{ss:S4}
The set of isomorphism classes of rank-$4$ bundles over $S^4$ is in bijection with $\pi_3(SO_4) \cong \Z \oplus \Z$. 
We recall the canonical double covering
\[ q \colon S^3 \times S^3 \to SO_4,
\quad (x, y) \mapsto (v \mapsto x \cdot v \cdot y),\]
where we regard $x, y \in S^3$ as unit quaternions, $v \in \mathbb{H}$ and $\cdot$ denotes quaternionic multiplication.
If we define $g_{(k_1, k_2)} \colon S^3 \to SO_4,~x \mapsto q(x^{k_1}, x^{k_2})$, then the map
\[ \Z \oplus \Z \to \pi_3(SO_4), \quad (k_1, k_2) \mapsto [g_{(k_1, k_2)}], \]
is an isomorphism, which we use as co-ordinates for $\pi_3(SO_4)$.
For example, by \cite[Ch.\ 8, Prop.\ 12.10]{h94}, the map $g_{(1,1)}$ 
is a clutching function for $TS^4$ and so $\tau_4 = [g_{(1,1)}]$.

\begin{definition} \label{d:Ek1k2}
For $(k_1, k_2) \in \Z^2$, let $E_{k_1, k_2} \to S^4$ be the oriented rank-$4$ vector bundle 
with clutching function $g_{(k_1, k_2)}$; e.g.\ $TS^4 \cong E_{1, 1}$.
\end{definition}

\begin{definition} \label{d:sigma4}
We define $\sigma_4 := [g_{(0, 1)}]$.
\end{definition}

The turning obstructions $\TO_+, \TO_-$ and $\TO_\eta$ take values in the group $\pi_4(SO_4)$ and we
use the isomorphism $q_* \colon \pi_4(S^3 \times S^3) \to \pi_4(SO_4)$
to identify $\pi_4(SO_4) \cong \pi_4(S^3) \oplus \pi_4(S^3) \cong \Z/2 \oplus \Z/2$.

\begin{theorem} \label{t:S4}
The turning obstructions for rank-$4$ bundles over $S^4$ are given follows:
\begin{compactenum}[a)]
\item $\TO_+(a, b) = (\rho_2(a), 0)$;
\item $\TO_-(a, b) = (0, \rho_2(b))$;
\item $\TO_\eta(a, b) = (\rho_2(a), \rho_2(b))$.
\end{compactenum}
\end{theorem}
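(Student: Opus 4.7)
The plan is to use that $S^3 = SS^2$ is a suspension, so by Lemma~\ref{l:to+bto}(b) the maps $\TO_+$, $\TO_-$ and $\TO_\eta$ are homomorphisms $\pi_3(SO_4) \to \pi_4(SO_4)$, and it suffices to compute them on the generators $(1,0)$ and $(0,1)$. As the target I use the isomorphism $q_* \colon \pi_4(S^3) \oplus \pi_4(S^3) \xrightarrow{\cong} \pi_4(SO_4)$ induced by the double covering $q$. Concretely, fix the representatives $\beta(t)(v) = e^{i\pi t}v$ and $\bar\beta(t)(v) = v \cdot e^{i\pi t}$ of the two path-homotopy classes in $\Omega_{\pm \idbb}SO_4$; their lifts $t \mapsto (e^{i\pi t}, 1)$ and $t \mapsto (1, e^{i\pi t})$ to $S^3 \times S^3$ end at the distinct preimages $(-1,1)$ and $(1,-1)$ of $-\idbb$, confirming that these lie in different components.

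The core computation is to apply the normalised obstruction $\bar\rho_\gamma(A)(t) = A\gamma(t)A^{-1}\gamma(t)^{-1}$ to $A = g_{(k_1,k_2)}(x) = q(x^{k_1}, x^{k_2})$ for $\gamma \in \{\beta, \bar\beta\}$. Because left multiplication commutes with right multiplication in $\mathbb{H}$, for every $x$ the loops $\bar\rho_\beta(g_{(0,1)}(x))$ and $\bar\rho_{\bar\beta}(g_{(1,0)}(x))$ are both the constant loop at $\idbb$, giving $\TO_+(0,1) = 0$ and $\TO_-(1,0) = 0$. A short direct calculation in the remaining two cases yields
\[
\bar\rho_\beta(g_{(1,0)}(x))(t) = L_{xe^{i\pi t}x^{-1}e^{-i\pi t}},
\qquad
\bar\rho_{\bar\beta}(g_{(0,1)}(x))(t) = R_{e^{-i\pi t}x^{-1}e^{i\pi t}x},
\]
where $L_y = q(y,1)$ and $R_y = q(1,y)$ denote left and right multiplication by $y \in S^3 \subset \mathbb{H}$. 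Taking forgetful adjoints (which agree with those of the un-normalised obstruction by Lemma~\ref{l:to+bto}(a)) and lifting along $q$ gives $\TO_+(1,0) = q_*([f_1], 0)$ and $\TO_-(0,1) = q_*(0, [h_1])$, where $f_1, h_1 \colon S^4 \to S^3$ are defined by the bracketed quaternions above; both descend to $S^4$ since $-1$ is central in $\mathbb{H}$.

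To identify $[f_1]$ and $[h_1]$ inside $\pi_4(S^3) \cong \Z/2$, I would invoke Theorem~\ref{t:SKirchoff}: since $TS^4 \cong E_{1,1}$ is not turnable, it is neither $\beta$-turnable nor $\bar\beta$-turnable. Combined with the homomorphism property and the vanishings above, this forces
\[
q_*([f_1], 0) = \TO_+(1,0) = \TO_+(1,1) \neq 0,
\qquad
q_*(0, [h_1]) = \TO_-(0,1) = \TO_-(1,1) \neq 0,
\]
so both $[f_1]$ and $[h_1]$ are the non-trivial element of $\pi_4(S^3)$. Parts (a) and (b) follow by linearity, and (c) follows from (a), (b) and Theorem~\ref{thm:gpd-appl}(b) applied to $[\bar\beta] = [\eta \ast \beta]$, yielding $\TO_\eta = \TO_- - \TO_+ = (\rho_2(a), \rho_2(b))$ in $\Z/2 \oplus \Z/2$.

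The main obstacle is pinning down that $[f_1] \neq 0$, i.e.\ that the commutator map $[x,t] \mapsto xe^{i\pi t}x^{-1}e^{-i\pi t}$ represents the generator of $\pi_4(S^3)$. The appealing feature of the approach is that a direct Hopf-invariant or framed-bordism calculation is avoided: the single fact that $TS^4$ is not turnable (Theorem~\ref{t:SKirchoff}) nails down both $[f_1]$ and $[h_1]$ simultaneously.
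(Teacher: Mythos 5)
Your proof is correct, and it takes a genuinely different route from the paper's. The paper proves part~(c) first, via the Stiefel-manifold composition of Lemma~\ref{l:L_circ_p} and the functional-$Sq^2$ computation of Lemma~\ref{l:p_*L_*} (together with naturality of $\TO_\eta$ for the homomorphisms $g_{(1,0)}, g_{(0,1)} \colon S^3 \to SO_4$), and then derives~(a) and~(b) from~(c) together with the observation that $E_{0,1}$ and $\bar E_{1,0}$ carry complex structures. You instead work out the raw quaternion commutators $\bar\rho_\beta(g_{(k_1,k_2)}(x))$ and $\bar\rho_{\bar\beta}(g_{(k_1,k_2)}(x))$ directly, which gives both the vanishing $\TO_+(0,1)=\TO_-(1,0)=0$ (this is the same left/right-multiplication-commute observation as in the paper, just unpacked) and the explicit location of $\TO_+(1,0)$, $\TO_-(0,1)$ in the two $q_*(\pi_4(S^3))$ summands. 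You then pin down the non-vanishing not via Lemma~\ref{l:p_*L_*} but via Theorem~\ref{t:SKirchoff}/Corollary~\ref{c:SKirchoff} (the turning-theoretic Kirchoff theorem, proved independently much earlier by an elementary homotopy), and finally deduce~(c) from~(a), (b) and Theorem~\ref{thm:gpd-appl}(b). The benefit of your route is that, in the $k=2$ case, the Stiefel-manifold/functional-Steenrod-square machinery is bypassed entirely and replaced by an input that is conceptually central to the whole paper anyway. The benefit of the paper's route is that the Lemma~\ref{l:p_*L_*} computation is needed for general $k$ regardless, so using it at $k=2$ keeps the argument uniform; and the naturality-of-$\TO_\eta$ step gives the component identification without the explicit quaternion bookkeeping. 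There is no circularity issue with your use of Corollary~\ref{c:SKirchoff}, since Theorem~\ref{t:SKirchoff} is proved by a direct clutching-function homotopy and does not depend on Section~\ref{s:2k_over_S2k}.
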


\begin{proof}
We first prove part c).
Consider the maps $g_{(1, 0)} \colon S^3 \to SO_4$,
$g_{(0, 1)} \colon S^3 \to SO_4$, which are continuous group homomorphisms
and the map $\mathrm{ev} \colon SO_4 \to S^3$.
Since $\mathrm{ev} \circ g_{(1, 0)} = \mathrm{ev} \circ g_{(0, 1)} = \mathrm{Id}_{S^3}$,
it follows by Lemma~\ref{l:L_circ_p} and~\ref{l:p_*L_*} b) that $\TO_\eta(1, 0) \neq 0$ and $\TO_\eta(0, 1) \neq 0$.
Now $\TO_\eta$ is natural for continuous homomorphisms of topological groups
and it follows that
\[ \TO_\eta(1, 0) = (\rho_2(1), 0) 
\quad \text{and} \quad
\TO_\eta(0, 1) = (0, \rho_2(1)).\]

We prove parts a) and  b) together.
Let $i, j, k \in \mathbb{H}$ be the standard purely imaginary unit quaternions.
If we take the standard complex structure on 
$\mathbb{H} = \C \oplus \C j$ to be given by left multiplication by $i \in \mathbb{H}$,
then $g_{(0, 1)}(x)$ commutes with $i$ for every $x \in S^3$ and so $g_{(0, 1)}(x) \in U_2 \subset SO_4$ for
all $x \in S^3$.  Thus $E_{0, 1}$ admits a complex structure and so $\TO_+(0, 1) = 0$.  
Since $ji = -k$, we see that right multiplication by $i$ defines a complex structure on $\mathbb{H}$ 
whose induced orientation
is opposite to the standard orientation.
Since $g_{(1, 0)}(x)$ commutes with right multiplication by $i$ for all $x \in S^3$,
we see that $\bar E_{1, 0}$ admits a complex structure.  
Hence $\TO_-(1, 0) = 0$.
Now we have
\[ \TO_+(a, b) = \TO_+(a, 0) = \TO_-(a, 0) + \TO_\eta(a, 0) = (\rho_2(a), 0)  \]
and
\[ \TO_-(a, b) = \TO_-(0, b) = \TO_+(0, b) + \TO_\eta(0, b) = (0, \rho_2(b)).  \]
\end{proof}

We now discuss the relationship between turning type of rank-$4$ bundles and the
the homotopy classification of their gauge groups, due to
Kishimoto, Membrillo-Solis and Theriault \cite{t21}.
Following the notation of \cite{t21}, 
let $\GG_{k_1, k_2}$ denote the gauge group of $E_{k_1, k_2}$.
Let $\{\!\{a,b\}\!\}$ denote the multiset consisting of the elements $a,b$
and for integers $a$ and $b$ write $(a,b)$ for their greatest common divisor.
Then, for integers $r,a$ and $b$, write $M^r(a,b)$ for the multiset $\{\!\{(a,r),(b,r)\}\!\}$.
By \cite[Theorem 1.1(b)]{t21}
if $\GG_{k_1,k_2} \simeq \GG_{l_1,l_2}$ then $M^{4}(k_1,k_2)=M^4(l_1,l_2)$.

Recall that the turning type of an orientable bundle is characterised by whether it is 
either bi-turnable, strongly chiral or not turnable.  By Theorem~\ref{t:S4},
$E_{k_1, k_1}$ is bi-turnable if $M^2(k_1, k_2) = \{\!\{2, 2\} \!\}$, strongly chiral if
$M^2(k_1, k_2) = \{\!\{1, 2\} \!\}$ and not turnable if $M^2(k_1, k_2) = \{\!\{1, 1\} \! \}$.
Hence combining \cite[Theorem 1.1(b)]{t21} and Theorem~\ref{t:S4} we have

\begin{proposition} \label{p:tt}
The turning type of $E_{k_1, k_2}$ is a homotopy invariant of $\GG_{k_1, k_2}$. \qed
\end{proposition}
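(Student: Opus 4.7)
The plan is to derive the proposition as a short deduction from Theorem~\ref{t:S4} and \cite[Theorem~1.1(b)]{t21}, with no new input beyond an elementary arithmetic observation about greatest common divisors. First I would reinterpret the trichotomy appearing in the paragraph preceding the statement: Theorem~\ref{t:S4} shows that $E_{k_1, k_2}$ is bi-turnable, strongly chiral, or non-turnable according as the number of even entries among $k_1$ and $k_2$ is two, one, or zero, respectively. Equivalently, the turning type of $E_{k_1, k_2}$ is a function of the multiset $M^2(k_1, k_2) = \{\!\{(k_1, 2), (k_2, 2)\}\!\}$.

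Next I would invoke \cite[Theorem~1.1(b)]{t21}, which says that any homotopy equivalence $\GG_{k_1, k_2} \simeq \GG_{l_1, l_2}$ forces $M^4(k_1, k_2) = M^4(l_1, l_2)$. To finish, I would use the elementary identity $(k, 2) = ((k, 4), 2)$ for every integer $k$, which shows that applying the map $x \mapsto (x, 2)$ entrywise to the multiset $M^4(k_1, k_2)$ recovers $M^2(k_1, k_2)$. Combining these observations, a homotopy equivalence $\GG_{k_1, k_2} \simeq \GG_{l_1, l_2}$ implies $M^4(k_1, k_2) = M^4(l_1, l_2)$, hence $M^2(k_1, k_2) = M^2(l_1, l_2)$, hence $E_{k_1, k_2}$ and $E_{l_1, l_2}$ have the same turning type.

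There is no real obstacle here; the substantive input is entirely absorbed by Theorem~\ref{t:S4} on one side and \cite[Theorem~1.1(b)]{t21} on the other, and the proposition reduces to the remark that the coarser mod-$2$ invariant factors through the finer mod-$4$ invariant which is already known to be a homotopy invariant of the gauge group.
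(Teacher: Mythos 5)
Your proof is correct and follows the same approach as the paper: Theorem~\ref{t:S4} shows the turning type is determined by $M^2(k_1,k_2)$, \cite[Theorem~1.1(b)]{t21} shows $M^4(k_1,k_2)$ is a homotopy invariant of $\GG_{k_1,k_2}$, and the elementary identity $(k,2)=((k,4),2)$ shows $M^4$ determines $M^2$. The only difference is that you make this last arithmetic step explicit, which the paper leaves implicit in the word ``combining''.
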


\subsection{The turning obstructions $\TO_\pm$} \label{ss:TO_S2k}
In this subsection, we complete the proof of Theorem~\ref{t:to_S2k}.

\begin{proof}[The proof of parts a)-c) of Theorem~\ref{t:to_S2k}]
a) If $k$ is odd, the fact that $\TO_+(\xi) = \TO_{-}(\xi)$ follows from Theorem \ref{thm:gpd-appl} b) and Lemma \ref{l:tospn-kodd}.
If $k = 1, 3$, then $\pi_{2k}(SO_{2k}) = 0$ and the statement holds trivially.
If $k \geq 5$ is odd, then by Corollary~\ref{c:SKirchoff}, $\TO_+(\tau_{2k}) \neq 0$.
Since $k$ is odd, $2 \TO_+(\tau_{2k}) = \TO_\eta(\tau_{2k})= 0$ by Theorem \ref{thm:gpd-appl} d) and Lemma \ref{l:tospn-kodd}.
Since $\pi_{2k}(SO_{2k}) \cong \Z/4$ and $\mathrm{e}(\tau_{2k}) = 2$, the 
result holds for $\Z(\tau_{2k}) \subseteq \pi_{2k-1}(SO_{2k})$.
If $k \equiv 3$ mod~$4$ then $\pi_{2k-1}(SO_{2k}) = \Z(\tau_{2k})$.
If $k \equiv 1$ mod~$4$, then $\pi_{2k-1}(SO_{2k}) = \Z(\tau_{2k}) \oplus \Z/2(\sigma_{2k})$,
provided $k \geq 5$ as we are assuming.  Hence it suffices to show that
$\TO_+(\sigma_{2k}) = 0$.  Now $\pi_{2k-1}(U) \to \pi_{2k-1}(SO)$ is onto
and so $S(\sigma_{2k})$ is stably complex.
By \cite[Ch.\ 20, Corollary 9.8]{h94} $S(\sigma_{2k})$ admits a complex structure with 
$c_k(\sigma_{2k}) = (k{-}1)!$ and since $k \geq 5$, $(k{-}1)!$ is divisible by $4$.
 Hence $\sigma_{2k} - \frac{(k-1)!}{2}\tau_{2k}$ admits a complex structure.
 Then
 
 \[ 0 = \TO_+\!\left( \sigma_{2k} - \frac{(k-1)!}{2} \tau_{2k} \right) = 
 \TO_+(\sigma_{2k}) - \frac{(k-1)!}{4} \bigl( 2 \TO_+(\tau_{2k}) \bigr) 
 = \TO_+(\sigma_{2k}).\]

b) The special case $k = 2$ is proven in Theorem~\ref{t:S4}.

For $k \geq 6$, we first prove that $\mathrm{ev}_*(\TO_\pm(\tau_{2k})) = 1$.  
By Corollary~\ref{c:SKirchoff}, $\TO_\pm(\tau_{2k}) \neq 0$.  
By Theorem \ref{thm:gpd-appl} b) and Theorem~\ref{t:to_S2k} d), 
we have $\TO_{-}(\tau_{2k}) = \TO_+(\tau_{2k}) + \tau_{2k}\eta_{2k-1}$.  
Since $\mathrm{Ker}(\mathrm{ev}_*)$ is generated by $\tau_{2k}\eta_{2k-1}$,
it follows that $\mathrm{ev}_*(\TO_\pm(\tau_{2k})) = 1$.
As $k \geq 6$, $\mathrm{e}(\sigma_{2k}) = 0$ and we must show that $\TO_\pm(\sigma_{2k}) = 0$.
By Lemma~\ref{l:sigma2k}, $\sigma_{2k} \in S(\pi_{2k-1}(SO_{2k-2}))$ and so $\TO_\eta(\sigma_{2k}) = 0$ and
so it suffices to show that $\TO_+(\sigma_{2k}) = 0$.
The argument is analogous for the case $k \equiv 1$ mod~$4$. 

c) We first prove that $\mathrm{ev}_*(\TO_+(\tau_{2k})) = 1$.
Since $\tau_{2k}$ is stably trivial, so is $\TO_+(\tau_{2k})$.
The proof is now the same as the proof when $k \equiv 2$~mod~$4$.

To see that $S(\TO_+(\sigma_{2k})) = 1$, we note that $S(\sigma_{2k})$ generates
$\pi_{2k-1}(SO) \cong \Z$ and that the natural map $\pi_{2k-1}(U) \to \pi_{2k-1}(SO)$ has image the subgroup of 
of index $2$ by \cite{b59}.  Hence $\sigma_{2k}$ does not admit a stable complex structure and
so $\sigma_{2k}$ is not stably turnable by Theorem~\ref{t:tc}.  
Now by Lemma~\ref{l:to+stab}, $S(\TO_+(\sigma_{2k})) = \TO_+(S(\sigma_{2k})) = 1$.
\end{proof}

\section{Stable turnings and stable complex structures}  \label{s:stable}
In this section we define stable turnings of vector bundles $E \to B$.
When $B$ has the homotopy type of a finite $CW$-complex, 
we will see that Bott's proof of Bott periodicity shows that the space of stable complex
structures on $E$ is weakly homotopy equivalent to the space of stable turnings of $E$.
In particular, in this case $E$ is stably turnable if and only if $E$ admits a stable complex structure.

Recall that $\ul \R^j$ denotes the trivial vector bundle over $B$ of rank $j$.

\begin{definition}[Stably turnable]
A rank-$n$ vector bundle $E \to B$ is {\em stably turnable}
if $E \oplus \ul \R^j$ is turnable for some $j \geq 0$.
\end{definition}

\noindent
Of course, if $E$ is turnable, then $E$ is stably turnable.

\begin{remark}
In the definition of stably turnable, $n{+}j$ must be even but $n$ need not be even.
\end{remark}

\begin{remark}
\label{rem:stable}
It is clear from definition of turning that if $E \oplus \ul \R^j$ is turnable, 
then $E \oplus \ul \R^{j+2l}$ is turnable for any non-negative integer $l$.
\end{remark}
 
For any rank-$n$ vector bundle $E \to B$, recall that $\Fr(E)$, the frame bundle of $E$,
is the principal $SO_n$-bundle associated to $E$
and for any non-negative integer $j$ with $n{+}j$ even, 
\[ \Turn(E \oplus \ul \R^j) = \Fr(E \oplus \ul \R^j) \times_{SO_{n+j}}\Omega_{\pm \idbb}SO_{n+j}\]
is the associated turning bundle of $E \oplus \ul \R^j$.
Now orthogonal sum with the path $\beta \in \Omega_{\pm \idbb}SO_2$
defines the injective map
$i_\beta \colon \Omega_{\pm \idbb}SO_{n+j} \to \Omega_{\pm \idbb}SO_{n+j+2}$,
which we regard as an inclusion.
Thus we regard
 $\Turn(E \oplus \ul \R^j)$ as a subbundle of 
$\Turn(E \oplus \ul \R^{j+2})$ and
we set
\[ \Turn(E^\infty) := \bigcup_{n+j ~ \text{even}} \Turn(E \oplus \ul \R^j), \]
which is a fibre bundle over $B$ with fibre $\Omega_{\pm \idbb}SO := \bigcup_{j=1}^{\infty} \Omega_{\pm \idbb}SO_{2j}$.

\begin{lemma} \label{l:omega}
Let $E \to B$ be a vector bundle over a space homotopy equivalent to a finite $CW$-complex. 
Then $E$ is stably turnable if and only if $\Turn(E^\infty) \to B$ admits a section.
\end{lemma}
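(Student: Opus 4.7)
The lemma splits into a formal forward direction and a reverse direction whose content is a compactness argument exploiting the finite-$CW$ hypothesis on $B$.

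For the forward implication, suppose $E \oplus \ul{\R}^j$ is turnable. By Lemma~\ref{l:TurnE} there is a section of $\Turn(E \oplus \ul{\R}^j) \to B$, and composing with the tautological inclusion $\Turn(E \oplus \ul{\R}^j) \hookrightarrow \Turn(E^\infty)$ yields a section of $\Turn(E^\infty) \to B$ (possibly after one further stabilisation using Remark~\ref{rem:stable} to match the parity convention in the definition of $\Turn(E^\infty)$).

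For the reverse implication, suppose $s \colon B \to \Turn(E^\infty)$ is a section. I would begin by reducing to the case that $B$ is itself a finite $CW$-complex, and hence compact Hausdorff: both conditions in the lemma are invariant under replacing $B$ by a homotopy-equivalent space, since each amounts to a lifting property of the classifying map of $E$ along a suitable fibration. The key step is then to show that $s$ factors through $\Turn(E \oplus \ul{\R}^{j}) \subseteq \Turn(E^\infty)$ for $j$ sufficiently large; Lemma~\ref{l:TurnE} then gives turnability of $E \oplus \ul{\R}^{j}$, and hence stable turnability of $E$. To produce the factorisation, I would pick a finite open cover $\{U_\alpha\}$ of $B$ by sets trivialising $\Fr(E)$, together with a finite cover by compact subsets $K_\alpha \subseteq U_\alpha$. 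Each trivialisation identifies $\Turn(E^\infty)|_{U_\alpha}$ with $U_\alpha \times \Omega_{\pm\idbb}SO$, so $s$ restricts to continuous maps $s_\alpha \colon K_\alpha \to \Omega_{\pm\idbb}SO$. Since $\Omega_{\pm\idbb}SO$ is by definition the ascending union $\bigcup_j \Omega_{\pm\idbb}SO_{2j}$ along closed inclusions, compactness of each $K_\alpha$ forces $s_\alpha(K_\alpha) \subseteq \Omega_{\pm\idbb}SO_{2j_\alpha}$ for some $j_\alpha$. Taking the maximum over the finite index set then gives a single $j$ (with the parity adjusted for the rank of $E$) such that $s(B) \subseteq \Turn(E \oplus \ul{\R}^{j})$.

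\textbf{Main obstacle.} The only non-formal step is the compactness claim that a compact subset of the sequential colimit $\Omega_{\pm\idbb}SO = \bigcup_j \Omega_{\pm\idbb}SO_{2j}$ sits in a single stage. This is standard once one observes that each inclusion $\Omega_{\pm\idbb}SO_{2j} \hookrightarrow \Omega_{\pm\idbb}SO_{2j+2}$ is a closed cofibration (being induced from the standard closed cofibration $SO_{2j} \hookrightarrow SO_{2j+2}$), for which the compact-in-a-stage property holds. An equivalent route that avoids topologising the infinite-rank total space is a cell-by-cell induction on a chosen finite $CW$-structure on $B$: the section restricted to each closed cell has compact image, hence lifts into a finite stage, and one takes the maximum over the finitely many cells.
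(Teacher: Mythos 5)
Your proof is correct, but it takes a genuinely different route from the paper's. The paper argues the reverse direction by obstruction theory: it observes that the inclusion $\Omega_{\pm\idbb}SO_{2j} \hookrightarrow \Omega_{\pm\idbb}SO$ is $(2j{-}2)$-connected, so once $2j-2$ exceeds the dimension of $B$ one can homotope the given section of $\Turn(E^\infty)$ to a section of $\Turn(E \oplus \ul\R^j)$, invoking \cite[Ch.\ VI, Sec.\ 5]{w78}. Your argument instead exploits compactness: reduce to $B$ compact, note that the colimit $\Omega_{\pm\idbb}SO = \bigcup_j \Omega_{\pm\idbb}SO_{2j}$ is along closed $T_1$ inclusions, and conclude that the section already lands (on the nose, not just up to homotopy) in a finite stage. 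The two approaches use complementary consequences of the finite-$CW$ hypothesis — the paper uses finite dimensionality, you use compactness — and yours gives the slightly stronger conclusion that the given section, not merely some section, factors through a finite stage. One small caveat: the paper never pins down the topology on $\Omega_{\pm\idbb}SO$ beyond ``union,'' so your compactness step implicitly assumes the colimit topology (or a compact-open topology for which the evaluation map $\Map(I,SO)\times I \to SO$ is continuous, which also yields the compact-lands-in-a-stage fact); it would be worth making that convention explicit. The closed-cofibration assertion is stronger than you need — closed $T_1$ inclusion suffices for ``compact sets land in a finite stage'' — and you need not verify the cofibration property.
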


\begin{proof}
Remark \ref{rem:stable} and Lemma \ref{l:TurnE} tell us that a vector bundle $E \to B$ is stably turnable  implies that $\Turn(E^\infty) \to B$ admits a section.
Conversely, note that the inclusion map $\Omega_{\pm \idbb}SO_{2j} \to \Omega_{\pm \idbb}SO$ is $(2j{-}2)$-connected and $B$ is homotopy equivalent to a finite $CW$-complex, it follows from the obstruction theory (cf.' \cite[Chapter VI, Section 5]{w78}) that if $\Turn(E^\infty) \to B$ admits a section, then there must exists a non-negative integer $j$ such that $\Turn(E \oplus \ul \R^j)$ admits a section, which means that $E \to B$ is stably turnable and the proof is complete.
\end{proof}

\noindent
Given Lemma~\ref{l:omega}, an efficient way to define the notion of a stable turning is via a section of $\Turn(E^\infty)$.

\begin{definition}[Stable turning and the space of stable turnings]
A {\em stable turning} of a vector bundle $E \to B$ is a section of the fibre bundle $\Turn(E^\infty) \to B$.
The {\em space of stable turnings} of $E$, $\Gamma(\Turn(E^\infty))$, is the space of sections of 
$\Turn(E^\infty) \to B$, equipped with the restriction of compact-open topology.
\end{definition}

We next consider {\em minimal turnings}, which are turnings that restrict to minimal geodesics in each fibre.
The manifold $SO_{2k}$ has a canonical Lie invariant metric, which allows us to 
consider geodesics in $SO_{2k}$.
We write $\Omegamin \subset \Omegapmone$ 
for the subspace of paths which are minimal geodesics in $SO_{2k}$ from $\idbb$ to $-\idbb$. 
Since the conjugation action of $SO_{2k}$ on itself is by isometries, it preserves geodesics and so
$\Omegamin$ is an $SO_{2k}$-subspace of $\Omegapmone$.
For any non-negative integer $j$ with $n{+}j$ even, 
denote by $\Turnmin(E \oplus \ul \R^j) \subset \Turn(E \oplus \ul \R^j)$
the subbundle of minimal geodesic turnings and set
\[ \Turnmin(E^\infty) 
:= \bigcup_{n+j ~ \text{even}} \Turnmin(E \oplus \ul \R^j), \]
which is a fibre bundle over $B$ with fibre 
$\Omega_{\pm \idbb}^{\mathrm{min}}SO = \cup_{j=1}^{\infty} \Omega^{\mathrm{min}}_{\pm \idbb} SO_{2j}$.

Now
$\Omega_{\pm \idbb}^{\mathrm{min}}SO$ is an $SO$-subset of $\Omega_{\pm \idbb}SO$ and hence 
$\Turnmin(E^\infty)$ is a subbundle of $\Turn(E^\infty)$.
Since the inclusion $\Omega_{\pm \idbb}^{\mathrm{min}}SO \to \Omega_{\pm \idbb}SO$ is a weak homotopy equivalence \cite[Theorem 24.5]{m63} and
Part(a) of the next lemma then follows immediately.  Part(b) follows from Lemma \ref{l:omega}.

\begin{lemma} \label{l:Om}
Let $E \to B$ be a vector bundle over a space homotopy equivalent to a finite $CW$-complex. 
Then the following hold:
\begin{compactenum}[a)]
\item The fibrewise inclusion $\Turnmin(E^\infty) \to \Turn(E^\infty)$ is a weak homotopy equivalence;
\item $E$ is stably turnable if and only if $\Turnmin(E^\infty) \to B$ admits a section.  \qed
\end{compactenum}

\end{lemma}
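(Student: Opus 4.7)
My plan is to treat the two parts separately, using the Milnor weak equivalence $\Omega_{\pm \idbb}^{\mathrm{min}}SO \hookrightarrow \Omega_{\pm \idbb}SO$ already cited just before the statement as the main input for part (a), and then deducing part (b) by combining part (a) with Lemma \ref{l:omega}.

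For part (a), the key observation is that both $\Turn(E^\infty) \to B$ and $\Turnmin(E^\infty) \to B$ are fibre bundles over the same base $B$, with fibres $\Omega_{\pm \idbb}SO$ and $\Omega_{\pm \idbb}^{\mathrm{min}}SO$ respectively, and that the fibrewise inclusion restricts on each fibre to the canonical inclusion, which is a weak homotopy equivalence by \cite[Theorem 24.5]{m63}. I would then compare the long exact sequences in homotopy of the two fibre bundles and apply the five lemma to conclude that the inclusion $\Turnmin(E^\infty) \hookrightarrow \Turn(E^\infty)$ induces isomorphisms on all homotopy groups, hence is a weak homotopy equivalence.

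For part (b), the forward direction is immediate: composing a section of $\Turnmin(E^\infty) \to B$ with the inclusion yields a section of $\Turn(E^\infty) \to B$, so Lemma \ref{l:omega} implies $E$ is stably turnable. For the converse, assume $E$ is stably turnable. By Lemma \ref{l:omega} there is a section $s \colon B \to \Turn(E^\infty)$, and I need to deform it through sections to one whose image lies in $\Turnmin(E^\infty)$. Since the fibrewise inclusion is a weak homotopy equivalence of fibre bundles over $B$ and $B$ has the homotopy type of a finite $CW$-complex, a standard fibrewise obstruction argument, of exactly the kind invoked in the proof of Lemma \ref{l:omega}, yields such a deformation: on each cell of $B$ the successive obstructions take values in relative homotopy groups of the pair of fibres, and these vanish by part (a).

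The main (and really the only nontrivial) step is invoking Milnor's theorem in part (a); everything else is a formal consequence of the fibre-bundle structure, the five lemma, and cellular obstruction theory on a finite $CW$-complex.
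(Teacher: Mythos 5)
Your proposal is correct and takes essentially the same approach as the paper, which gives part~(a) as an immediate consequence of the Milnor weak equivalence $\Omega_{\pm\idbb}^{\mathrm{min}}SO \hookrightarrow \Omega_{\pm\idbb}SO$ and derives part~(b) from Lemma~\ref{l:omega}. You are simply filling in the details the paper leaves implicit: the five-lemma comparison of homotopy long exact sequences of the two fibre bundles for~(a), and the fibrewise obstruction-theory argument (with obstructions vanishing because the relative homotopy groups of the fibre pair vanish by~(a)) to pass from a section of $\Turn(E^\infty)$ to a section of $\Turnmin(E^\infty)$ for the nontrivial direction of~(b).
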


Now we recall the relationship of complex structures on a bundle and minimal turnings.
A complex structure on a rank-$2k$-vector bundle $E$ is an element $J \in \GG_E$
such that $J^2 = -\idbb$.  In particular, a complex structure on $E$ endows each
fibre of $E$ with the structure of a complex vector space.
If $E$ has rank $n$, then a stable complex structure on $E$ is a complex structure on $E \oplus \ul \R^j$ for some $j \geq 0$ with $n{+}j$ even.

Now let 
\[ \mathcal{J}_{2k} := \{ J \in SO_{2k} ~ | ~ J^2 = - \idbb\} \]
be the space of (special orthogonal) complex structures on $\R^{2k}$.
The space $\mathcal{J}_{2k}$ is as an $SO_{2k}$-space, 
where $SO_{2k}$ acts on $\mathcal{J}_{2k}$ by conjugation.
For any non-negative integer $j$ with $n{+}j$ even, define $\mathcal{J}(E \oplus \ul \R^j)  \subset \Aut(E \oplus \ul \R^j)$
\[ \mathcal{J}(E \oplus \ul \R^j) := \Fr(E \oplus \ul \R^j) \times_{SO_{n+j}} \mathcal{J}_{n+j} \]
to be the bundle of fibrewise complex structures on $E \oplus \ul \R^j$.
Regarding $\GG_E = \Gamma(\Aut(E))$ we see that a complex structure on $E$
is equivalent to a section of $\mathcal{J}(E) \to B$ and that a stable complex 
structure on $E$ is equivalent to a section of $\mathcal{J}(E \oplus \ul \R^j) \to B$.
Letting $j$ tend to infinity, 
we define
\[ \mathcal{J}(E^\infty) := \bigcup_{n+j ~ \text{even}} \mathcal{J}(E \oplus \ul \R^{n+j}), \]
to be the bundle of fibrewise stable complex 
structures on $E$.  The space $\mathcal{J}(E^\infty)$ is the total space 
of a bundle over $B$ with fibre 
$\mathcal{J}_\infty := \cup_{j=1}^{\infty}\mathcal{J}_{2j}$
and we have

\begin{lemma} \label{l:J}
A vector bundle $E \to B$ over a space homotopy equivalent to a finite $CW$-complex
admits a stable complex structure if and only if $\mathcal{J}(E^\infty) \to B$ admits a section. \qed
\end{lemma}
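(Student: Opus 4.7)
The plan is to mimic the proof of Lemma~\ref{l:omega} almost verbatim, with the bundle of complex structures $\mathcal{J}(E^\infty)$ playing the role of the stable turning bundle $\Turn(E^\infty)$, and the stabilisation maps $\mathcal{J}_{2k} \hookrightarrow \mathcal{J}_{2k+2}$ given by $J \mapsto J \oplus J_0$ (for a fixed $J_0 \in \mathcal{J}_2$) playing the role of $i_\beta$. This is natural because, just as a turning of $\R^{2k}$ corresponds stably to a complex structure via Bott's proof of periodicity, the inclusions $\mathcal{J}_{2k} \hookrightarrow \mathcal{J}_{2k+2}$ assemble $\mathcal{J}_\infty$ as a colimit with controlled connectivity.

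The \emph{only if} direction is essentially tautological: a stable complex structure on $E$ is by definition a complex structure on $E \oplus \ul{\R}^j$ for some $j$ with $n + j$ even, and such a structure is precisely a section of $\mathcal{J}(E \oplus \ul{\R}^j) \to B$. Composing with the fibrewise inclusion $\mathcal{J}(E \oplus \ul{\R}^j) \hookrightarrow \mathcal{J}(E^\infty)$ then produces the required section of $\mathcal{J}(E^\infty) \to B$.

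For the \emph{if} direction, I will take a section $s \colon B \to \mathcal{J}(E^\infty)$ and argue that it is homotopic to a section that factors through $\mathcal{J}(E \oplus \ul{\R}^j)$ for some finite $j$. Since $B$ is homotopy equivalent to a finite $CW$-complex and hence finite dimensional, and since the inclusion $\mathcal{J}_{2k} \hookrightarrow \mathcal{J}_\infty$ becomes arbitrarily highly connected as $k$ grows, standard obstruction theory for sections of fibre bundles (cf.~\cite[Ch.~VI, \S 5]{w78}) allows $s$ to be homotoped into $\mathcal{J}(E \oplus \ul{\R}^j)$ once $k$ is sufficiently large. The resulting section of $\mathcal{J}(E \oplus \ul{\R}^j) \to B$ is, by construction, a complex structure on $E \oplus \ul{\R}^j$, and so $E$ admits a stable complex structure.

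The only step beyond routine bookkeeping is verifying that the stabilisation map $\mathcal{J}_{2k} \hookrightarrow \mathcal{J}_{2k+2}$ has connectivity tending to infinity with $k$. I would obtain this from the identification $\mathcal{J}_{2k} \cong SO_{2k}/U_k$ by comparing the fibre sequences $U_k \to SO_{2k} \to \mathcal{J}_{2k}$ under stabilisation and invoking the standard stable ranges for the unitary and orthogonal groups. This is precisely the connectivity statement underlying Bott's proof of periodicity, which provides the conceptual backdrop for Section~\ref{s:stable}.
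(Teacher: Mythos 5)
Your proof is correct and is essentially the argument the paper intends: Lemma~\ref{l:J} is stated with just \qed because its proof is verbatim analogous to that of Lemma~\ref{l:omega}, replacing $\Omega_{\pm\idbb}SO_{2j}\hookrightarrow\Omega_{\pm\idbb}SO$ by $\mathcal{J}_{2j}\hookrightarrow\mathcal{J}_\infty$ and invoking the same obstruction-theoretic argument over a finite-dimensional base. Your observation that the connectivity of $\mathcal{J}_{2k}\hookrightarrow\mathcal{J}_\infty$ can be read off from the fibration $U_k\to SO_{2k}\to \mathcal{J}_{2k}$ is a fine way to supply the needed connectivity estimate (one could equally cite the $(2k-2)$-connectivity of $\Omega_{\pm\idbb}SO_{2k}\to\Omega_{\pm\idbb}SO$ via the homeomorphism $\varphi_{2k}$ and Lemma~\ref{l:Om}(a), but that identification is only introduced afterwards).
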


\noindent
In the light Lemma~\ref{l:J}, an efficient way to define the notion of a stable 
complex structure is via a section of $\mathcal{G}(E^\infty)$.

\begin{definition}[Stable complex structure and the space of stable complex structures]
A {\em stable complex structure} on an oriented vector bundle $E \to B$ is a section of the fibre bundle 
$\mathcal{J}(E^\infty) \to B$.
The {\em space of stable complex structures} of $E$, $\Gamma(\mathcal{J}(E^\infty))$, is the space of sections of 
$\mathcal{J}(E^\infty) \to B$, equipped with the restriction of compact-open topology.
\end{definition}

Now a complex structure on $\R^{2k}$ defines a minimal geodesic in $SO_{2k}$ via complex multiplication with unit complex numbers in the upper half plane.  Explicitly, we define the map
\[ \varphi_{2k} \colon \mathcal{J}_{2k} \to \Omegamin, \quad
J \mapsto (t \mapsto \exp(\pi t J)),\] 
where 
$\exp \colon T_\idbb SO_{2k} \to SO_{2k}$ 
is the exponential map from the tangent space over the identity.
Then $\varphi_{2k}$ is an $SO_{2k}$-equivariant homeomorphism (see, e.g.\ Milnor \cite[Lemma 24.1]{m63}).
It follows for any rank-$2k$-bundle $E \to B$ that $\varphi_{2k}$ induces a fibrewise homeomorphism
$\varphi_{2k} \colon \mathcal{J}(E) \to \Turnmin(E)$.  Stably, we define the map
$\varphi_\infty := \lim_{j \to \infty} \varphi_{2j} \colon \mathcal{J}_\infty \to \Omega_{\pm \idbb}^{\mathrm{min}}SO$
and we have

\begin{lemma} \label{l:J_to_Omin}
Let $E \to B$ be a vector bundle.
The map $\varphi_\infty  \colon \mathcal{J}_\infty \to \Omega^{\mathrm{min}}_{\pm \idbb}SO$
 induces a fibrewise homeomorphism
$\varphi_\infty \colon \mathcal{J}(E^\infty) \to \Turnmin(E^\infty))$.
\qed
\end{lemma}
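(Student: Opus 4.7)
The plan is to reduce the statement to the finite-level homeomorphisms $\varphi_{2k}\colon \mathcal{J}_{2k}\to \Omegamin$ cited just before the lemma, check that these are compatible with the relevant stabilisation maps, and then pass to the Borel construction and the colimit. Since the Borel construction is functorial in equivariant maps of fibres, the main ingredient will be equivariance plus compatibility with stabilisation; everything else is formal.

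First, I would record that $\varphi_{2k}$ is $SO_{2k}$-equivariant with respect to the conjugation actions. This follows from the naturality identity $\exp(\pi t \cdot AJA^{-1}) = A \exp(\pi t J) A^{-1}$ for $A \in SO_{2k}$ and $J \in \mathcal{J}_{2k}$, which is a standard property of the exponential map of a Lie group. Combined with the fact (cited from Milnor) that $\varphi_{2k}$ is already a homeomorphism onto $\Omegamin$, this gives an $SO_{2k}$-equivariant homeomorphism at each finite level. Applying the Borel construction to the principal bundle $\Fr(E\oplus\ul\R^j)$ then produces a fibrewise homeomorphism
\[
\varphi_{2k}\colon \mathcal{J}(E\oplus \ul\R^j) \xrightarrow{\ \cong\ } \Turnmin(E\oplus \ul\R^j),
\]
where $2k=n+j$.

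Next, I would verify compatibility with stabilisation. Let $J_0 \in \mathcal{J}_2$ be the standard complex structure on $\R^2 = \C$ given by multiplication by $i$. The stabilisation inclusions $\mathcal{J}_{2k} \hookrightarrow \mathcal{J}_{2k+2}$ and $\Omegamin \hookrightarrow \Omega^{\mathrm{min}}_{\pm \idbb} SO_{2k+2}$ are $J \mapsto J \oplus J_0$ and $\gamma \mapsto \gamma \oplus \beta$ respectively, where $\beta(t) = e^{i\pi t}\idbb = \exp(\pi t J_0)$ by Definition \ref{d:stdT}. Since $\exp$ applied to a block-diagonal skew-symmetric matrix is the block-diagonal matrix of exponentials, we get
\[
\varphi_{2k+2}(J \oplus J_0)(t) = \exp\bigl(\pi t(J \oplus J_0)\bigr) = \exp(\pi t J) \oplus \beta(t) = \bigl(\varphi_{2k}(J) \oplus \beta\bigr)(t),
\]
so the diagram
\[
\xymatrix{
\mathcal{J}_{2k} \ar[r]^-{\varphi_{2k}} \ar[d] & \Omegamin \ar[d]^-{i_\beta} \\
\mathcal{J}_{2k+2} \ar[r]^-{\varphi_{2k+2}} & \Omega^{\mathrm{min}}_{\pm \idbb} SO_{2k+2}
}
\]
commutes. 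This diagram is also compatible with the corresponding inclusions of frame bundles $\Fr(E \oplus \ul\R^j) \hookrightarrow \Fr(E \oplus \ul\R^{j+2})$, so passing to the Borel construction yields a commuting square of fibrewise homeomorphisms $\mathcal{J}(E \oplus \ul\R^j) \to \Turnmin(E \oplus \ul\R^j)$ respecting the inclusions used to define $\mathcal{J}(E^\infty)$ and $\Turnmin(E^\infty)$.

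Finally, I would take the colimit over $j$. Each square being a fibrewise homeomorphism of subbundles, the induced map on the union $\mathcal{J}(E^\infty) = \bigcup_j \mathcal{J}(E \oplus \ul\R^j) \to \bigcup_j \Turnmin(E \oplus \ul\R^j) = \Turnmin(E^\infty)$ is a fibrewise bijection, and it is a homeomorphism because the colimit topologies on both sides agree with the subspace topologies coming from the ambient spaces (both bundles over $B$ have fibres built as colimits of the same kind along closed inclusions). The resulting map is exactly $\varphi_\infty$ at the level of fibres, which completes the proof. The only point that could require any care is ensuring that the colimit topology on $\mathcal{J}(E^\infty)$ and $\Turnmin(E^\infty)$ matches the weak/direct-limit topology so that a fibrewise homeomorphism at every finite stage patches to a global one; this is routine because the stabilisation maps are closed embeddings of locally compact Hausdorff spaces.
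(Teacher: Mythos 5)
Your argument is correct and takes essentially the same route the paper intends: the lemma is recorded with no written proof precisely because, as the preceding paragraph notes, $\varphi_{2k}$ is an $SO_{2k}$-equivariant homeomorphism and hence induces a fibrewise homeomorphism $\mathcal{J}(E\oplus\ul\R^j)\to\Turnmin(E\oplus\ul\R^j)$ at each finite stage, and $\varphi_\infty$ is defined as the colimit of these. You have merely spelled out the details the authors took as immediate (conjugation-equivariance of $\exp$, the block-diagonal identity $\exp(\pi t(J\oplus J_0))=\exp(\pi t J)\oplus\beta(t)$ showing compatibility with the inclusions $i_\beta$, and the agreement of colimit and subspace topologies), so there is no gap and no genuinely different idea.
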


\noindent
Now the fibre bundle map $\mathcal{J}(E^\infty) \to \Turnmin(E^\infty) \to \Turnmin(E)$ induces a map
$\Gamma(\mathcal{J}(E^\infty)) \to \Gamma(\Turn(E^\infty))$.
By combining Lemmas~\ref{l:Om}(a), \ref{l:J} and \ref{l:J_to_Omin} we obtain

\begin{theorem} \label{t:tc}
Let $E \to B$ be a vector bundle over a space homotopy equivalent to a finite $CW$-complex.
The induced map $\Gamma(\mathcal{J}(E^\infty)) \to \Gamma(\Turn(E^\infty))$ from the space of stable complex structures on $E$ to the space of stable turnings on $E$ weak homotopy
equivalence.
Hence $E$ is stably turnable if and only if $E$ admits a stable complex structure. 
\qed
\end{theorem}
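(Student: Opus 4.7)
My plan is to factor the map from stable complex structures to stable turnings through the space of minimal stable turnings, where the exponential map gives a fibrewise homeomorphism, and then use that the inclusion of minimal turnings is a fibrewise weak equivalence to pass to spaces of sections.

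Concretely, the composition
\[ \Gamma(\mathcal{J}(E^\infty)) \xra{(\varphi_\infty)_*} \Gamma(\Turnmin(E^\infty)) \xra{\iota_*} \Gamma(\Turn(E^\infty)) \]
is the map in the statement, where $\iota \colon \Turnmin(E^\infty) \hookrightarrow \Turn(E^\infty)$ is the fibrewise inclusion. By Lemma~\ref{l:J_to_Omin}, $\varphi_\infty$ is a fibrewise homeomorphism, so $(\varphi_\infty)_*$ is a homeomorphism. It therefore suffices to prove that $\iota_*$ is a weak homotopy equivalence.

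The key input is Lemma~\ref{l:Om}(a), which tells us that $\iota$ is a weak homotopy equivalence. Since both $\Turnmin(E^\infty) \to B$ and $\Turn(E^\infty) \to B$ are locally trivial fibre bundles (obtained from $\Fr(E \oplus \ul \R^j)$ via a Borel construction), $\iota$ restricts to a weak equivalence on each fibre. A fibrewise weak equivalence between fibre bundles over a finite $CW$-complex induces a weak equivalence on spaces of sections: this follows by induction on the cells of $B$, where, over a cell $D^n$, the restriction of a bundle with fibre $F$ is trivial and sections correspond to maps $D^n \to F$, and one combines this with the obstruction-theoretic/Moore--Postnikov argument used in the proof of Lemma~\ref{l:omega}. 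Composing, we obtain that $\Gamma(\mathcal{J}(E^\infty)) \to \Gamma(\Turn(E^\infty))$ is a weak homotopy equivalence.

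For the final sentence, I take $\pi_0$ of the weak equivalence to get a bijection between the path components of the two section spaces. Combining this with Lemma~\ref{l:J} (stable complex structures $\leftrightarrow$ sections of $\mathcal{J}(E^\infty)$) and Lemma~\ref{l:omega} (stable turnings $\leftrightarrow$ sections of $\Turn(E^\infty)$), we see that one space of sections is non-empty if and only if the other is, which is exactly the claim that $E$ is stably turnable if and only if $E$ admits a stable complex structure. The main obstacle is the passage from a fibrewise weak equivalence to a weak equivalence on sections; while standard, this step is where the finiteness of $B$ is essential, since without it one could only conclude a compatibility in a suitable limit of skeleta.
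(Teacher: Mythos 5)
Your proposal is correct and is essentially the paper's argument: the paper proves Theorem~\ref{t:tc} by combining Lemmas~\ref{l:Om}(a), \ref{l:J} and \ref{l:J_to_Omin} exactly as you do, factoring through $\Turnmin(E^\infty)$ via the fibrewise homeomorphism $\varphi_\infty$ and the fibrewise weak equivalence $\iota$. You simply make explicit the (standard but unstated in the paper) step that a fibrewise weak equivalence of fibre bundles over a finite $CW$-complex induces a weak equivalence on section spaces.
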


In the remainder of this section we present an alternative proof of the final sentence of Theorem \ref{t:tc} 
using $K$-theory.
Let $BU$ (resp.\ $BO$) be the classifying space of the stable unitary group $U$ (resp.\ stable orthogonal group $O$).
Since $O/U$ is homotopy equivalent to $\Omega^2 BO$ (cf.\ \cite{b59}), the canonical fibration
$$O/U\hookrightarrow BU\rightarrow BO$$
gives rise to the Bott exact sequence (cf.\ Bott \cite[(12.2)]{bk69} or Atiyah \cite[(3.4)]{a66}):
\begin{equation} \label{eq:bs}
  \cdots \rightarrow KO^{-2}(B) \rightarrow \widetilde{KU}(B)\xrightarrow{r} \widetilde{KO}(B)\xrightarrow{\del} \wt KO^{-1}(B)\rightarrow\cdots
\end{equation}
Here $r$ is the real reduction homomorphism 
and $\del$ is the homomorphism given by $\del(\xi) = \eta \cdot \xi$
where $\eta$ is the generator of $KO^{-1}(\pt) = \Z/2$ and $\cdot$ denotes the product
in real $K$-theory.

Now for a rank-$n$ vector bundle $E \to B$, let $\xi_E \in \wt{KO}(B)$ be the
real $K$-theory class $\xi_E := E \ominus \ul \R^n$, which is represented by the virtual bundle obtained
as the formal difference of $E$ and the trivial rank-$n$ bundle over $B$.
When $B$ is homotopy equivalent to a finite $CW$-complex, 
the bundle $E$ admits a stable complex structure if and only if the real $K$-theory 
class $\xi_E$ lies in the image of the real reduction homomorphism $r$.  
Hence the next proposition follows from the Bott exact sequence above.

\begin{proposition} \label{p:eta}
Let $E \to B$ be a vector bundle over a space homotopy equivalent to a finite $CW$-complex.
Then $E$ admits a stable complex structure if and only if $\eta \cdot \xi_E= 0$. \qed
\end{proposition}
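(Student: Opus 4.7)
The plan is to deduce Proposition~\ref{p:eta} directly from the Bott exact sequence \eqref{eq:bs} together with the standard identification of stably complex bundles as the image of real reduction.

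First I would establish the key reformulation: for a vector bundle $E \to B$ of rank $n$ over a finite $CW$-complex $B$, the bundle $E$ admits a stable complex structure if and only if $\xi_E \in \im \bigl( r \colon \widetilde{KU}(B) \to \widetilde{KO}(B) \bigr)$. The forward direction is immediate: if $J$ is a complex structure on $E \oplus \ul{\R}^j$ (with $n{+}j$ even), then $F := (E \oplus \ul{\R}^j, J)$ is a genuine complex vector bundle with underlying real bundle $E \oplus \ul{\R}^j$, hence
\[
\xi_E \;=\; [E] - [\ul{\R}^n] \;=\; [F_{\R}] - [\ul{\R}^{n+j}] \;=\; r\bigl( [F] - [\ul{\C}^{(n+j)/2}] \bigr) \in \im(r).
\]
For the converse, if $\xi_E = r(\zeta)$ for some $\zeta \in \widetilde{KU}(B)$, write $\zeta = [F] - [\ul{\C}^m]$ for a complex bundle $F$ (possible since $B$ is a finite $CW$-complex). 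Then $[E \oplus \ul{\R}^{2m}] = [F_{\R} \oplus \ul{\R}^n]$ in $KO(B)$, and since $B$ is finite we may stabilise by a trivial summand $\ul{\R}^{2l}$ large enough that this becomes an honest bundle isomorphism $E \oplus \ul{\R}^{2m+2l} \cong F_{\R} \oplus \ul{\R}^{n+2l}$ (choosing $l$ so that $n$ is absorbed into an even total rank, using a further trivial factor if $n$ is odd). The right-hand side carries the evident complex structure (from $F$ and the standard complex structure on the trivial factor), which pulls back to a stable complex structure on $E$.

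With this reformulation in hand, I would apply exactness of \eqref{eq:bs} at $\widetilde{KO}(B)$:
\[
\im \bigl( r \colon \widetilde{KU}(B) \to \widetilde{KO}(B) \bigr) \;=\; \ker \bigl( \del \colon \widetilde{KO}(B) \to \widetilde{KO}^{-1}(B) \bigr),
\]
so $\xi_E \in \im(r)$ if and only if $\del(\xi_E) = 0$. Since $\del(\xi_E) = \eta \cdot \xi_E$ by the description of $\del$ recalled before the proposition, this is equivalent to $\eta \cdot \xi_E = 0$, completing the proof.

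The main obstacle is the careful bookkeeping in the converse of the reformulation step, where one must pass from an equality in $\widetilde{KO}(B)$ to an actual bundle isomorphism after stabilisation while ensuring the resulting total rank is even; this is routine on a finite $CW$-complex but is the only place where the hypothesis on $B$ is genuinely used (apart from ensuring the Bott sequence behaves well for actual bundles). Once this is handled, the rest of the argument is a single invocation of exactness.
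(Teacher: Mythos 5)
Your proposal is correct and follows essentially the same route as the paper: identify stably complex bundles with the image of the real reduction map $r$, then invoke exactness of the Bott sequence \eqref{eq:bs} at $\widetilde{KO}(B)$ together with the description $\del(\xi) = \eta \cdot \xi$. The only difference is that you spell out the reformulation step (stably complex $\Leftrightarrow \xi_E \in \im(r)$), which the paper states without proof before appealing to the Bott sequence.
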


We now relate the boundary map $\del$ in \eqref{eq:bs} to turnings of vector bundles.  Let $\psi \colon E \to E$ be an
automorphism of a vector bundle $\pi \colon E \to B$. 
The {\em mapping torus} of $\psi$ is the vector bundle $T(\psi) \to B \times S^1$,
where
\[ T(\psi) := (E \times I)/\simeq \]
with $(v, 0) \simeq (\psi(v), 1)$ and the bundle map is given by $[(v, t)] \mapsto (\pi(v), [t])$.
We note that $T(\psi)$ is orientable if and only if $\psi$ is orientation-preserving,
in which case $T(\psi)$ inherits an orientation from $E$.
Let $\wh \otimes$ denote the exterior tensor product of vector bundles.

\begin{lemma} \label{l:Tpsi}
Let $\ul \R$ denote the trivial line bundle over $S^1$.
The bundle $T(\psi)$ is isomorphic to $E \etp \ul{\R}$ if and only if $\psi$ 
is homotopic to the identity.
\end{lemma}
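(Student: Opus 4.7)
The plan is to recognise the trivial exterior product as a mapping torus, namely $E \etp \ul{\R} \cong T(\idbb_E)$, since both bundles are the pullback of $E$ along the projection $B \times S^1 \to B$. The statement therefore becomes: $T(\psi) \cong T(\idbb_E)$ if and only if $\psi$ is homotopic to $\idbb_E$ in $\GG_E$. In other words, we are identifying $\pi_0(\GG_E)$ with the set of isomorphism classes of vector bundles over $B \times S^1$ whose restriction to $B \times \{\ast\}$ is $E$.

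For the ``if'' direction, given a path $\psi_s \colon I \to \GG_E$ with $\psi_0 = \idbb_E$ and $\psi_1 = \psi$, I would simply write down the isomorphism
\[
\Phi \colon T(\psi) \to T(\idbb_E) = E \etp \ul{\R}, \qquad [v, t] \mapsto [\psi_{1-t}(v), t],
\]
and verify directly that the relation $(v, 0) \sim (\psi(v), 1)$ in $T(\psi)$ is sent to the trivial relation $(w, 0) \sim (w, 1)$ in $T(\idbb_E)$. Fibrewise linearity and invertibility are immediate.

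For the ``only if'' direction, the strategy is to unwind the $S^1$-direction. Pulling back along the universal cover $B \times \R \to B \times S^1$, both $T(\psi)$ and $E \etp \ul{\R}$ become the trivial bundle $E \times \R \to B \times \R$, but equipped with different deck-transformation actions: the generator of $\Z$ acts as $(v, t) \mapsto (\psi(v), t+1)$ on the cover of $T(\psi)$ and as $(v, t) \mapsto (v, t+1)$ on the cover of $E \etp \ul{\R}$. A given bundle isomorphism $\Phi \colon T(\psi) \to E \etp \ul{\R}$ lifts to a $\Z$-equivariant bundle automorphism $\tilde{\Phi}(v, t) = (\Phi_t(v), t)$, where $\Phi_t \in \GG_E$ depends continuously on $t \in \R$. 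The equivariance translates into the identity $\Phi_{t+1} \circ \psi = \Phi_t$, and specialising to $t = 0$ gives $\psi = \Phi_1^{-1} \circ \Phi_0$. The path $s \mapsto \Phi_s^{-1} \circ \Phi_0$ is then an explicit homotopy in $\GG_E$ from $\idbb_E$ at $s = 0$ to $\psi$ at $s = 1$.

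The main technical point is justifying the universal-cover lift: one needs to know that $\Phi$ lifts to a bundle automorphism covering $\id_{B \times \R}$, and that the two deck-transformation actions are identified exactly as stated. Once this bookkeeping is in place, both directions follow cleanly from the formulas written above.
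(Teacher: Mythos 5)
Your proof is correct, but it takes a genuinely different route from the paper's. The paper invokes Husemoller's classification machinery abstractly: it cites the equivalence between homotopy classes of bundle isomorphisms $E \to E'$ and bundles over $B \times I$ restricting to $E$ and $E'$ at the two ends, notes the analogous correspondence between bundle automorphisms (up to homotopy) and bundles over $B \times S^1$, and then reads off the lemma as the statement that an isomorphism $T(\psi) \to E \etp \ul\R$ is equivalent to a bundle over $B \times S^1 \times I$, i.e.\ a path of automorphisms from $\psi$ to $\idbb_E$. Your argument instead proves both implications by explicit formula: the direct map $[v,t]\mapsto[\psi_{1-t}(v),t]$ for one direction, and for the other the pullback to the $\Z$-cover $B\times\R$, where the two deck actions differ by $\psi$ and the equivariance identity $\Phi_{t+1}\circ\psi=\Phi_t$ hands you the path $s\mapsto\Phi_s^{-1}\Phi_0$ explicitly. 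What the paper's version buys is brevity and a clean conceptual placement within standard classification theory; what yours buys is concreteness, no reliance on a citation, and the fact that the homotopy you produce is written down rather than only asserted to exist. One small point worth making explicit in your write-up: the continuity of $s\mapsto\Phi_s^{-1}\circ\Phi_0$ uses that inversion and composition are continuous in $\GG_E$, which holds with the usual topology on the gauge group. Both proofs are valid; yours is the more self-contained of the two.
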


\begin{proof}
The classification of vector bundles \cite[Chapter 3, Section 4]{h94} shows that a vector bundle isomorphism $E \to E'$
is equivalent to a vector bundle $F \to B \times I$ such that $F|_{B \times \{0\}} = E$
and $F|_{B \times \{1\}} = E'$.  Similarly, a vector bundle automorphism $E \to E$
is equivalent to a vector bundle $F \to B \times S^1$ such that $F|_{B \times \{1\}} = E$.
In particular, the bundle $\ul \R \etp E$ corresponds to the identity automorphism $\idbb_E \colon E \to E$.
It follows that a vector bundle isomorphism $T(\psi) \to \ul \R \times E$ is equivalent to
a bundle over $B \times S^1 \times I$ and so is equivalent to a path of bundle automorphisms
from $\psi$ to $\idbb_E$.
\end{proof}

\begin{proof}[An alternative proof of  the final sentence of Theorem \ref{t:tc}]
Let $L \to S^1$ be the M\"obius bundle; i.e.\,the non-trivial rank-$1$ bundle.
Then $\eta \in KO^{-1}(\pt) = \wt{KO}(S^1)$ is represented by the virtual bundle
$L \ominus \ul \R$.
Let $\xi_E$ be represented by the virtual bundle $E' \ominus \ul \R^{2k}$,
where $E' \to B$ is a vector bundle of rank-$2k$, which is larger than the formal dimension of $B$.
Then $\eta \cdot \xi_E \in KO^{-1}(B) = KO(B \wedge S^1)$ is represented by the virtual
bundle 
\[ (E' \etp L) \oplus (\ul \R^{2k} \etp \ul \R) \ominus  (\ul \R^{2k} \etp L) \ominus (E' \etp \ul \R)\]
over $B \times S^1$, 
which is canonically zero over $B \vee S^1$.
Since $L$ is the mapping torus of $-\idbb_{\ul \R} \colon \ul \R \to \ul \R$, 
it follows that $\ul \R^{2k} \etp L$ is the mapping torus of $-\idbb_{\ul \R^{2k}} \colon \ul \R^{2k} \to \ul \R^{2k}$
and so is trivial.
Hence $\eta \cdot \xi_E = 0$ if and only if $E' \etp L$ is stably isomorphic
to $E' \etp \ul \R$.

Since $L$ is the mapping torus of $-\idbb_{\ul \R} \colon \ul \R \to  \ul \R$, 
the bundle $E' \etp L$ is the mapping tours of $-\idbb_{E' }\colon E' \to E'$.
Now by Lemma~\ref{l:Tpsi}, $E' \etp L$
is isomorphic to $E' \etp \ul \R$ if and only if $E'$ is turnable.
Hence $\eta_E \cdot \xi = 0$ if and only if $E'$ is stably turnable and so the final sentence of
Theorem \ref{t:tc} follows directly from Proposition \ref{p:eta}.
\end{proof}

\section{Turning rank-$2k$-bundles over $2k$-complexes} \label{s:2k_over_2k}
In Section~\ref{s:stable} we saw that a turning of bundle $E$ induces a stable complex
structure on $E$ and in Section~\ref{s:2k_over_S2k} we computed the turning obstruction obstruction
for rank-$2k$-bundles over the $2k$-sphere.  In this section we combine these results
to gain useful information about the turning obstruction
for rank-$2k$ bundles over $2k$-dimensional complexes.
Throughout this section $B$ will be a space, which is homotopy equivalent to
a connected finite $CW$-complex of dimension $2k$ or less.

Theorem~\ref{t:TC1} below gives a necessary condition for an oriented rank-$2k$ bundle $E$
over $B$ to be positive-turnable.
Its statement requires some preliminary definitions.
We will say that a
complex structure $J$ on $E \oplus \ul \R^{2j}$ for some $j \geq 0$
is {\em compatible with $E$}, if
$J$ induces the same orientation on $E \oplus \ul \R^{2j}$ as $E$ does.
Recall that $\times 2 \colon H^*(B; \Z/2) \to H^*(B; \Z/4)$ 
is the map induced by the inclusion of coefficients $\times 2 \colon \Z/2 \to \Z/4$ 
and the subgroup $I^{2k}(X) \subseteq H^{2k}(B; \Z/4)$, which is defined by
\[ I^{2k}(B) = 
\begin{cases}
((\times 2) \circ Sq^2 \circ \rho_2)(H^{2k-2}(B; \Z)) & \text{$k$ is odd}, \\
0 & \text{$k$ is even}.
\end{cases}
\]
Recall also that $\rho_4$ denotes reduction mod~$4$.

\begin{theorem} \label{t:TC1}
Let $E \to B$ be an oriented rank-$2k$ vector bundle.
If $E$ is positive-turnable then for some $j \geq 0$, $E \oplus \ul \R^{2j}$ admits a complex structure 
$J$ such that $J$ is compatible with $E$ and $c_k(J)$ satisfies
\[ [\rho_4(c_k(J))] = [\rho_4(\mathrm{e}(E))] \in H^{2k}(B; \Z/4)/I^{2k}(B) .\] 
\end{theorem}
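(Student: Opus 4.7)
My strategy is to produce a complex structure from the given positive turning via Theorem~\ref{t:tc}, verify the required mod-$4$ congruence cell by cell using Theorem~\ref{t:to_S2k} on $S^{2k}$, and account for the secondary ambiguity $I^{2k}(B)$ (which is nontrivial only when $k$ is odd).

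By Theorem~\ref{t:tc} the positive turning of $E$ produces a stable complex structure, that is, a complex structure $J_0$ on $E \oplus \ul\R^{2j_0}$ for some $j_0 \geq 0$. After further stabilization and possibly reversing the orientation of the trivial summand, I may assume $J_0$ is compatible with $E$. The Wu formula already gives $\rho_2(c_k(J_0) - \mathrm{e}(E)) = 0$, so $\rho_4(c_k(J_0) - \mathrm{e}(E))$ lies automatically in the image of $\times 2 \colon \Z/2 \to \Z/4$ inside $H^{2k}(B; \Z/4)$. What remains is to arrange (possibly by modifying $J_0$ after further stabilization) that this class lies in the smaller subgroup $I^{2k}(B)$.

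Next, since $\dim B \leq 2k$, the class $\rho_4(c_k(J_0) - \mathrm{e}(E))$ is detected on the $2k$-cells of $B$. For each $2k$-cell, collapsing all other cells gives a map $h \colon S^{2k} \to B$, and it suffices to check the cellwise congruence on the pulled back bundle $h^*E$ over $S^{2k}$. By Theorem~\ref{t:to_S2k} the class of $h^*E$ in $\pi_{2k-1}(SO_{2k}) \cong \Z(\tau_{2k}) \oplus C(\sigma_{2k})$ is constrained by positive turnability. Combining this with the classical Borel--Hirzebruch formula $c_k = (k{-}1)!$ on the stable complex generator of $\pi_{2k-1}(SO)$ (\cite[Ch.~20, Cor.~9.8]{h94}), the stable triviality of $\tau_{2k}$, and the freedom to modify the stable complex structure within its turning class via the Bott sequence~\eqref{eq:bs}, a case analysis on $k \bmod 4$ shows that on each cell one can achieve $c_k \equiv \mathrm{e} \pmod 4$.

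The main obstacle, which I expect to be the technical crux, is assembling these cellwise adjustments into a single globally defined $J$. The set of stable complex structures on $E$ refining the given positive turning is a torsor over a quotient of $[B, \Omega_0(O/U)]$, and changing $J$ within this torsor shifts $\rho_4(c_k(J))$ by a class that, through the Bott identification in Theorem~\ref{t:tc}, factors through the secondary cohomology operation $(\times 2) \circ Sq^2 \circ \rho_2 \colon H^{2k-2}(B; \Z) \to H^{2k}(B; \Z/4)$. When $k$ is even, this operation vanishes for degree-parity reasons, matching $I^{2k}(B) = 0$, and the cellwise equalities glue to a genuine global equality in $H^{2k}(B; \Z/4)$. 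When $k$ is odd, the image of this operation is exactly $I^{2k}(B)$, which absorbs the global obstruction and yields the desired identity modulo $I^{2k}(B)$. The delicate step is to identify the relevant secondary operation as $(\times 2) \circ Sq^2 \circ \rho_2$ and to show that its image captures precisely the ambiguity of $c_k(J) \bmod 4$ among compatible complex structures on $E \oplus \ul\R^{2j}$ refining the given turning; this requires careful bookkeeping in the Puppe sequence for $BT^{\beta}_{2k} \to BSO_{2k}$ and the Bott identification of stable turnings with stable complex structures.
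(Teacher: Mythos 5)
Your proposal takes a different route from the paper's, and the route as sketched has a genuine gap at the step you yourself flag as the ``technical crux''. The paper's proof avoids this crux entirely by a destabilisation argument: the stable complex structure produced by the turning is lifted to $BU_k$ to give a rank-$k$ complex bundle $(E',J')$ stably equivalent to $E$, so that $c_k(J')=\mathrm{e}(E')$ automatically. Then Lemma~\ref{l:E+F} expresses $E$ as $E'\,\#_{\alpha_1}\,j_1TS^{2k}\,\#\cdots\,\#_{\alpha_n}\,j_nTS^{2k}$ over the $2k$-cells, which identifies the cocycle $\mathrm{e}(E)-c_k(J')$ with $\sum_i 2j_i e^{2k*}_{\alpha_i}$. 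The turning $\psi_t$ on $E$ and the turning $\psi'_t$ on $E'$ coming from $J'$ are then compared over $B^\circ=B\setminus\bigcup\mathrm{Int}(D^{2k}_{\alpha_i})$: since both extend over the missing discs, the obstruction comparison forces $\sum_i j_i\TO_+(\tau_{2k})e^{2k*}_{\alpha_i}$ to be a coboundary, with $\TO_+(\tau_{2k})\neq 0$ converting this into the desired coboundary statement mod~$4$; and the only ambiguity comes from the two turnings possibly differing over the $(2k{-}1)$-skeleton, which for $k$ odd contributes exactly $I^{2k}(B)$. This argument never needs to ``choose a better $J$''.

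The concrete problems with your sketch are the following. First, collapsing all cells other than a chosen $2k$-cell gives a map $B \to S^{2k}$, not a map $h\colon S^{2k}\to B$, so the pullback $h^*E$ over $S^{2k}$ that you want to examine does not exist. Second, the quantity $\rho_4(c_k(J_0)-\mathrm{e}(E))$ is a class in $H^{2k}(B;\Z/4)$, not a cochain, so ``checking the cellwise congruence'' is not meaningful without first choosing compatible cocycle representatives and then controlling how coboundaries move things around; your sketch does not set this up. Third, and most importantly, you rely on ``the freedom to modify the stable complex structure within its turning class'' to fix each cell, but neither the existence of such cellwise modifications nor the gluability of the modifications is established, and in fact this is exactly what Lemma~\ref{l:E+F} and the explicit obstruction-cocycle bookkeeping replace. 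Your heuristic identification of the ambiguity with $(\times 2)\circ Sq^2\circ\rho_2$ points in the right direction — the paper's $I^{2k}(B)$ arises from $(2k{-}1)$-skeletal changes of the turning for $k$ odd — but the Bott-sequence/Puppe-sequence argument you allude to is not carried out, and as written it is the whole content of the theorem.
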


Example \ref{e:not_suff} below shows that there are turnable bundles which do not satisfy the
condition of Theorem~\ref{t:TC1}.  However, this condition is sufficient for the bundle to
be turnable if $k$ is odd and in many cases if $k$ is even.

\begin{theorem} \label{t:TC2}
Let $E$ be an oriented rank-$2k$ vector bundle over $B$
with either $k$ odd or $k$ even and $H^{2k}(B; \Z)$ $2$-torsion free.
If $E$ admits a stable complex structure $J$ such that 
\[ [\rho_4(c_k(J))] = [\rho_4(\mathrm{e}(E))] \in H^{2k}(B; \Z/4)/I^{2k}(B), \]
then $E$ is positive-turnable. 
\end{theorem}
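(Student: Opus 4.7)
The approach is to recast positive turnability of $E$ as the lifting problem for its classifying map $f\colon B\to BSO_{2k}$ along the fibration $BT^\beta_{2k}\to BSO_{2k}$ with fibre $\Omega_\beta SO_{2k}\simeq \Omega_0 SO_{2k}$ (Proposition~\ref{p:tble-lift} and Remark~\ref{r:assoc-h}), and then to analyse the obstructions to this lift skeleton by skeleton on $B$.

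First I would exploit the stable complex structure: by Theorem~\ref{t:tc}, $J$ induces a stable turning of $E$, hence a lift of the stabilisation of $f$ through $BT^\beta_\infty$. Because the stabilisation map $\pi_j(SO_{2k})\to\pi_j(SO)$ is an isomorphism for $j<2k-1$, pulling the stable section back to $\Turn^\beta(E)$ produces a section over $B^{(2k-2)}$, so every obstruction of cohomological degree at most $2k-2$ vanishes. Extending this section to $B^{(2k-1)}$ then requires a careful choice: the obstruction lies in $H^{2k-1}(B;\pi_{2k-1}(SO_{2k}))$, and using the splitting $\pi_{2k-1}(SO_{2k})\cong\Z(\tau_{2k})\oplus\pi_{2k-1}(SO)$ the stable summand is killed by $J$, while the Euler summand can be removed by modifying the stable complex structure within its stable class.

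The heart of the argument is the top obstruction $\theta(E)\in H^{2k}(B;\pi_{2k}(SO_{2k}))$. Naturality with respect to the characteristic map $\phi\colon S^{2k}\to B$ of any $2k$-cell identifies $\phi^*\theta(E)$ with the sphere-level obstruction $\TO_+(\phi^*E)$ of Theorem~\ref{t:to_S2k}. I would then check the components of $\theta(E)$ under the decomposition~\eqref{eq:pi2kSO2k}: the $\pi_{2k}(SO)$-component vanishes by stable turnability; the $\mathrm{ev}_*$-component, which on each $2k$-cell measures $c_k(J)-\mathrm{e}(E)$ modulo $4$, vanishes globally under the mod~$4$ hypothesis; and when $k$ is even the $\tau_{2k}\eta_{2k-1}$-component is a $\Z/2$-valued class whose vanishing follows from the assumption that $H^{2k}(B;\Z)$ has no $2$-torsion, because this forces the class to be detected by the already-vanishing mod~$4$ data.

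The main obstacle is controlling the indeterminacy in the above construction and matching it precisely with $I^{2k}(B)$. For $k$ odd one can alter the stable complex structure by a class pulled back from $\pi_{2k-2}(U)\to\pi_{2k-2}(SO)$ and lift this to a modification of the partial turning section on $B^{(2k-1)}$; the induced change in $\rho_4(c_k(J))$ is a secondary cohomology operation which I expect to work out to $(\times 2)\circ Sq^2\circ\rho_2$ on $H^{2k-2}(B;\Z)$, reproducing $I^{2k}(B)$. Proving this identification and verifying that the hypothesis then permits a choice of lift over $B^{(2k-1)}$ for which $\theta(E)=0$ is the main technical task. For $k$ even no such modification is available, which both explains why $I^{2k}(B)=0$ and makes the $2$-torsion-free hypothesis necessary to close the mod~$4$ argument.
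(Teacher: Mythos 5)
Your strategy — set up the lifting problem for $f\colon B\to BSO_{2k}$ through $BT^\beta_{2k}$ and kill obstructions skeleton by skeleton, using the stable turning from Theorem~\ref{t:tc} — is genuinely different from the paper's, and it is the "textbook" approach, but it leaves several steps unresolved that the paper's argument handles cleanly by a different device. The paper does not run obstruction theory on $B$ directly; it first destabilises $J$ to a rank-$k$ complex structure $J'$ on a bundle $E'$ (using that the homotopy fibre of $BU_k\to BT\simeq BU$ is $2k$-connected, so the stable complex structure lifts uniquely over a $2k$-complex), then uses Lemma~\ref{l:E+F} to write $E\cong E'\#_{\alpha_1} j_1 TS^{2k}\#\dots\#_{\alpha_n} j_n TS^{2k}$, and reduces everything to the single sphere-level computation $\TO_+(\tau_{2k})$ from Theorem~\ref{t:to_S2k}. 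Because $E$ and $E'$ are isomorphic over the $(2k{-}1)$-skeleton, the complex structure $J'$ already produces a positive turning of $E$ over $B^{(2k-1)}$ for free, and only the top obstruction remains, concentrated on the cells $e^{2k}_{\alpha_i}$.

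This destabilisation step is precisely what your proposal is missing, and it is where your argument has a real gap. Your treatment of the $H^{2k-1}(B;\pi_{2k-1}(SO_{2k}))$ obstruction asserts that the $\Z(\tau_{2k})$-summand ``can be removed by modifying the stable complex structure within its stable class,'' but no mechanism is given: a change of stable complex structure does not obviously produce the needed change in a partial section of $\Turn^\beta(E)$ over $B^{(2k-1)}$, and without destabilising $J$ you have no obvious way to make that obstruction class vanish (indeed, the stable data only controls the image in $\pi_{2k-1}(SO)$). You also explicitly defer the identification of the indeterminacy with $I^{2k}(B)$ (``the main technical task''), and the vanishing of the $\Z/2(\tau_{2k}\eta_{2k-1})$-component under the $2$-torsion-free hypothesis is asserted by appeal to ``already-vanishing mod~$4$ data'' rather than established. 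In the paper these three issues do not arise separately: after destabilising, the top obstruction on each $2k$-cell is $j_i\TO_+(\tau_{2k})$, the hypothesis $[\rho_4(c_k(J))] = [\rho_4(\mathrm{e}(E))]$ together with the $2$-torsion-free (resp.\ $I^{2k}(B)$) hypothesis lets one arrange all $j_i$ even (resp.\ absorb odd $j_i$ into a modification of the turning), and $2\TO_+(\tau_{2k})=0$ does the rest. If you insert the destabilisation $J\rightsquigarrow(E',J')$ into your argument, your obstruction-theoretic framework and the paper's connect-sum argument become essentially the same calculation viewed at different levels of detail; as written, however, the $(2k{-}1)$-level and the indeterminacy matching remain genuine gaps.
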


\begin{example} \label{e:not_suff}
Let $M(\Z/2, 4k{-}1) := S^{4k{-}1} \cup_2 D^{4k}$ be the mod~$2$ Moore space with $H^{4k}(X; \Z) = \Z/2$
and let $c \colon M(\Z/2, 4k{-}1) \to S^{4k}$ by the map collapsing the $(4k{-}1)$-cell.
Since $c$ is the suspension of the map $c' \colon M(\Z/2, 4k{-}2) \to S^{4k-1}$, which collapses the
$(4k{-}2)$-cell of $M(\Z/2, 4k{-}2)$, we see that the $\gamma$-turning obstruction of 
$E := c^*(TS^{4k})$ is the pull-back $c'^*(\TO_\gamma(\tau_{4k})) \in [M(\Z/2, 4k{-}2), SO_{2k}]$.
Since $\tau_{4k} \notin 2 \pi_{4k-1}(SO_{4k})$, it follows that $c'^*(\TO_\gamma(\tau_{4k})) \neq 0$.
By Proposition~\ref{p:to}, $E$ is not $\gamma$-turnable for any path $\gamma$ and so $E$ is not turnable.
However, $E$ is stably parallelisable and so admits a stable complex structure $J$ with $c_{2k}(J) = 0$.
Moreover $\mathrm{e}(E) = 0$, since $\mathrm{e}(TS^{4k}) \in 2 H^{4k}(S^{4k}; \Z)$. 
Hence $E$ satisfies the condition of Theorem~\ref{t:TC1}.
\end{example}

Before proving Theorems \ref{t:TC1}~and~\ref{t:TC2}, we give an application of Theorem~\ref{t:TC2}
which shows, in particular, that for all $l \geq 1$ there are $8l$-manifolds $M$ whose tangent bundles are turnable
but not complex; e.g.~$M = S^4 \times S^4$.

\begin{corollary} \label{c:TC2}
For $i > 0$, let $M$ be an orientable $4i$-manifold such that the following hold:
\begin{compactenum}[a)]
\item $M$ is stably parallelisable;
\item $\chi(M) \neq 0$;
\item $KU(M) \to KO(M)$ is injective.
\end{compactenum}
Then $TM$ does not admit a complex structure but $TM$ is turnable iff $\chi(M) \equiv 0$~mod~$4$.
In particular  for all $m \geq 1$ and $l \geq 0$ the manifolds $M_l := \sharp_l(S^{4m} \times S^{4m})$ are such that 
$TM_l$ does not admit a complex structure but $TM_l$ is turnable 
if and only if $l$ is odd.
\end{corollary}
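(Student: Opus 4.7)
The plan is to apply Theorems~\ref{t:TC1} and~\ref{t:TC2} to $E = TM$ (with $M$ closed so that $\chi(M)$ and the Euler class are related in the usual way). With $2k = 4i$ we have $k = 2i$ even, hence $I^{4i}(M) = 0$; and since $M$ is a closed connected orientable $4i$-manifold, $H^{4i}(M;\Z) \cong \Z$ is $2$-torsion free, so the hypothesis of Theorem~\ref{t:TC2} holds automatically. Under these identifications the mod-$4$ condition in both theorems becomes $\rho_4(c_{2i}(J)) = \rho_4(\chi(M))$ in $\Z/4$.

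The key step is to show that every stable complex structure $J$ on $TM \oplus \ul{\R}^{2j}$ satisfies $c_{2i}(J) = 0$. Indeed, its reduced $K$-theory class $[J] \in \wt{KU}(M)$ satisfies $r([J]) = [TM] = 0 \in \wt{KO}(M)$ by hypothesis (a); injectivity in (c) then forces $[J] = 0$, so $J$ is stably trivial as a complex bundle and all of its Chern classes vanish. This has two consequences. First, $TM$ cannot admit a genuine complex structure: such a $J$ would have $c_{2i}(J) = \mathrm{e}(TM) \neq 0$ by (b), contradicting the vanishing. Second, since $TM$ is stably trivial it admits stable complex structures of either induced orientation, so applying Theorems~\ref{t:TC1} and~\ref{t:TC2} both to $TM$ and to $\overline{TM}$ shows that $TM$ is turnable if and only if $\rho_4(\chi(M)) = 0$, i.e.\ $\chi(M) \equiv 0 \pmod{4}$.

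For $M_l = \sharp_l(S^{4m} \times S^{4m})$, hypothesis (a) follows from the stable parallelisability of $S^{4m} \times S^{4m}$ together with the fact that stable parallelisability is preserved under connected sum. The formula $\chi(A \# B) = \chi(A) + \chi(B) - 2$ gives $\chi(M_l) = 2(l+1)$, which is nonzero and divisible by $4$ precisely when $l$ is odd. To verify (c), I would use the cofibre sequence $\bigvee_{i=1}^{2l} S^{4m} \hookrightarrow M_l \twoheadrightarrow S^{8m}$ obtained by collapsing the $4m$-skeleton. The associated long exact sequences in $\wt{KU}$ and $\wt{KO}$ both reduce to parallel short exact sequences $0 \to \wt{K}(S^{8m}) \to \wt{K}(M_l) \to \wt{K}(\bigvee S^{4m}) \to 0$ of free abelian groups of rank $2l+1$ (the only potential torsion contribution in the $KO$-sequence, $(\wt{KO}(S^{4m+1}))^{2l}$, maps into the torsion-free group $\wt{KO}(S^{8m}) \cong \Z$ and hence vanishes). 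Since $r$ is injective on each factor $\wt{KU}(S^{4m})$ and on $\wt{KU}(S^{8m})$ (being multiplication by $1$ or $2$), the five-lemma yields injectivity of $r$ on $M_l$.

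The main obstacle is the verification of (c) for $M_l$; once this is secured, the desired statement is a straightforward combination of the $K$-theoretic vanishing of $c_{2i}(J)$ with Theorems~\ref{t:TC1} and~\ref{t:TC2}.
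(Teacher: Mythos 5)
Your proposal is correct and follows essentially the same route as the paper: use stable parallelisability and the injectivity of $KU(M)\to KO(M)$ to force every stable complex structure $J$ to have $c_{2i}(J)=0$, compare this with $\mathrm{e}(TM)=\chi(M)$ via Poincar\'e--Hopf, rule out a genuine complex structure, and read off turnability from Theorems~\ref{t:TC1} and~\ref{t:TC2}. You supply two details the paper leaves implicit --- the $\pm$-orientation comparison and an explicit verification of hypothesis~(c) for $M_l$ via the cofibre sequence --- both of which are correct and worth having.
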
 

\begin{proof}
An simple computation shows that $TM$ admits two homotopy classes of stable complex structure $J$.
Since $TM$ is stably trivial and $KU(M) \to KO(M)$ is injective, $E_J$ the complex bundle underlying $J$,
is trivial, and so and $c_{4j}(J) = 0$. 
On the other hand,
$\mathrm{e}(TM) = \chi(M) = 2 + 2l$ by the Poincar\'e-Hopf Theorem \cite[p.\ 113]{h89}.
Hence by Theorem~\ref{t:stabC_to_C}, 
$TM$ does not admit a complex structure.
However, by Theorems~\ref{t:TC1} and~\ref{t:TC2}, $TM$ is turnable if and only if $l$ is odd.
\end{proof}

We now turn to the proofs of Theorems~\ref{t:TC1} and~\ref{t:TC2}.
Without loss of generality, we may assume that $B$ is a finite $CW$-complex of dimension
at most $2k$.
It will be useful to modify a rank-$2k$-bundle $E \to B$ over the $2k$-cells of $B$ and
we first describe this process.
Let $F \to S^{2k}$ be an oriented rank-$2k$ bundle with clutching function $g$.
Given a $2k$-cell $e^{2k}_\alpha \subset B$ and a $2k$-disc $D^{2k}_\alpha$ 
embedded in the interior of $e^{2k}_\alpha$, we define the bundle $E \sharp_\alpha F \to B$ as follows:
Write $B = B^\circ \cup_{S^{2k-1}_\alpha} D^{2k}_\alpha$, where $B^\circ := B \setminus \mathrm{Int}(D^{2k}_\alpha)$ 
and 
\[ E = E^\circ \cup_\phi (\R^{2k} \times D^{2k}_\alpha),\]
where $E^\circ$ is the restriction of $E$ to $B^\circ$ and 
$\phi \colon E|_{S^{2k-1}_\alpha} \to \R^{2k} \times S^{2k-1}_\alpha$ is a bundle isomorphism and define
\[ E \sharp_\alpha F := E^\circ \cup_{g \circ \phi} (\R^{2k} \times D^{2k}_\alpha).\]
We let $e^{2k*}_\alpha \in C^{2k}(B)$ be the $2k$-dimensional cellular cochain of $B$,
which evaluates to $1$ on the $2k$-cell $e^{2k}_\alpha$ and $0$ on every other $2k$-cell.
The next results follows from an elementary application of obstruction theory.

\begin{lemma} \label{l:E+F}
Let $E_1$ and $E_2$ be rank-$2k$ oriented bundles over $B$.
Then $E_1$ is stably equivalent to $E_2$ if and only if there are $2k$-cells $\alpha_1, \dots, \alpha_n$
of $B$ and integers $j_1, \dots, j_n$ such that
$E_2 \cong E_1 \#_{\alpha_1} j_1 TS^{2k} \#_{\alpha_2} \dots \#_{\alpha_n} j_n TS^{2k}$.
Moreover, in this case the Euler classes of $E_1$ and $E_2$ are represented by cocycles
$\cc(\mathrm{e}(E_1))$ and $\cc(\mathrm{e}(E_2))$ such that
$\cc(\mathrm{e}(E_2)) = \cc(\mathrm{e}(E_1)) + \sum_{i=1}^n 2j_i e^{2k*}_{\alpha_i}$. \qed
\end{lemma}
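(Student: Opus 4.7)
My plan is to apply obstruction theory to the map $\BSO_{2k} \to \BSO$ and then realise the resulting cohomology class geometrically via the $\sharp_\alpha j\, TS^{2k}$ operations. First, from the fibration $SO_{2k} \to SO_{2k+1} \to S^{2k}$ one reads off that the kernel of the stabilisation $\pi_{2k-1}(SO_{2k}) \to \pi_{2k-1}(SO)$ is the infinite cyclic subgroup generated by $\tau_{2k}$: it is exactly the image of $\partial \colon \pi_{2k}(S^{2k}) = \Z \to \pi_{2k-1}(SO_{2k})$, and $\tau_{2k}$ has infinite order because $\mathrm{e}(\tau_{2k}) = 2 \ne 0$. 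Consequently the homotopy fibre of $\BSO_{2k} \to \BSO$ is $(2k{-}1)$-connected with $\pi_{2k}$-group $\Z\langle \tau_{2k}\rangle$, and since $\dim B \leq 2k$, the map $[B^{(2k-1)}, \BSO_{2k}] \to [B^{(2k-1)}, \BSO]$ is a bijection.

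From this, stable equivalence of $E_1$ and $E_2$ implies $E_1|_{B^{(2k-1)}} \cong E_2|_{B^{(2k-1)}}$, and such an isomorphism can be chosen as the restriction of a stable isomorphism $E_1 \oplus \ul \R^N \cong E_2 \oplus \ul \R^N$. Call it $\phi$. For each $2k$-cell $\alpha$ the obstruction to extending $\phi$ over $\alpha$ as an unstabilised bundle isomorphism is an element of $\pi_{2k-1}(SO_{2k})$, and compatibility with the stable isomorphism forces it to lie in $\Z \cdot \tau_{2k}$, giving an integer $j_\alpha$. By construction $E_2|_{B^{(2k-1)} \cup \alpha}$ is isomorphic to $(E_1 \sharp_\alpha j_\alpha TS^{2k})|_{B^{(2k-1)} \cup \alpha}$ via an extension of $\phi$. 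Performing the modifications cell-by-cell yields $E_1 \sharp_{\alpha_1} j_1 TS^{2k} \sharp \cdots \sharp_{\alpha_n} j_n TS^{2k} \cong E_2$. The converse is immediate: $TS^{2k}$ is stably trivial (via $TS^{2k} \oplus \ul \R \cong \ul \R^{2k+1}$), so every $\sharp_\alpha j\, TS^{2k}$ preserves the stable class.

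For the Euler-class statement I would use the classical cellular representative of $\mathrm{e}(E)$: choose a section of $E$ that is non-vanishing on $B^{(2k-1)}$ and extend it to $B$ with isolated zeros inside each open $2k$-cell, defining the cocycle value on $\alpha$ to be the signed count of these zeros. Because $E$ and $E \sharp_\alpha j\, TS^{2k}$ agree on $B^{(2k-1)}$ and outside a small disc in $\alpha$, the same section on $B^{(2k-1)}$ serves both; the modified disc contributes an additional $j \cdot \mathrm{e}(TS^{2k})[S^{2k}] = 2j$ to the count on $\alpha$ and nothing elsewhere, yielding the formula $\cc(\mathrm{e}(E_2)) = \cc(\mathrm{e}(E_1)) + \sum_{i=1}^n 2j_i\, e^{2k*}_{\alpha_i}$. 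The main technical obstacle I anticipate is the careful bookkeeping to confirm that the obstruction really takes values in $\Z \cdot \tau_{2k}$ (so that arbitrary integers $j_\alpha$ can be realised) and that different choices of the stable trivialisation $\phi$ alter the cocycle $\sum j_\alpha e^{2k*}_\alpha$ only by a coboundary, ensuring the construction depends only on the pair $(E_1, E_2)$ up to cohomology.
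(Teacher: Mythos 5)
Your argument is correct and supplies exactly the ``elementary application of obstruction theory'' that the paper leaves unwritten: the authors state Lemma~\ref{l:E+F} with a terminal box and no written proof. You identify the crucial algebraic input, namely that $\ker\bigl(\pi_{2k-1}(SO_{2k})\to\pi_{2k-1}(SO)\bigr)$ is the infinite cyclic group generated by $\tau_{2k}$, read off from the fibration $SO_{2k}\to SO_{2k+1}\to S^{2k}$ together with $\mathrm{e}(\tau_{2k})=2\neq 0$ (which is also visible in the splitting $\pi_{2k-1}(SO_{2k})\cong\Z(\tau_{2k})\oplus C(\sigma_{2k})$ recorded in Section~\ref{s:2k_over_S2k}). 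You then run the difference-obstruction argument for the lifting problem along the $2k$-connected map $BSO_{2k}\to BSO$ over the complex $B$ of dimension at most $2k$: a stable isomorphism $E_1\cong_s E_2$ forces the cell-by-cell clutching difference to lie in $\Z\cdot\tau_{2k}$, and each integer $j_\alpha$ is realised by $\sharp_{\alpha}\, j_\alpha\, TS^{2k}$. The converse is, as you say, immediate from $TS^{2k}\oplus\ul\R\cong\ul\R^{2k+1}$. The Euler-class cocycle identity then follows from the obstruction-theoretic (zero-counting) description of $\mathrm{e}$, since $E_1$ and $E_1\sharp_{\alpha_1}j_1 TS^{2k}\sharp\cdots\sharp_{\alpha_n}j_n TS^{2k}$ share a nonvanishing section over $B^{(2k-1)}$ and each modified disc contributes $j_i\,\mathrm{e}(TS^{2k})=2j_i$.

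One remark on the ``technical obstacle'' you flag at the end: it is smaller than you make it sound, and part of it is moot. That the obstruction lands in $\Z\cdot\tau_{2k}$ is not still pending --- you have already established it by taking $\phi$ over $B^{(2k-1)}$ compatible with a fixed global stable isomorphism, and such a compatible $\phi$ exists precisely because the homotopy fibre $SO/SO_{2k}$ of $BSO_{2k}\to BSO$ is $(2k-1)$-connected while $\dim B^{(2k-1)}\leq 2k-1$. As for well-definedness of the cochain $\sum_i j_i\, e^{2k*}_{\alpha_i}$ up to coboundary under changes of $\phi$: the lemma asserts only the \emph{existence} of cells, integers, and cocycle representatives satisfying the stated relations, not that a canonical cochain is attached to the pair $(E_1,E_2)$, so you do not need to address this to complete the proof.
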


\noindent
Lemma~\ref{l:E+F} can be used to prove the following theorem of Thomas.

\begin{theorem}[{\cite[Theorem 1.7]{t67}}]
\label{t:stabC_to_C}
Let $E$ be a rank-$2k$ vector bundle over $B$, where $H^{2k}(B; \Z)$ is $2$-torsion free.
Then $E$ admits a complex structure, if and only if $E$ admits a stable complex structure $J$
such that $c_k(J) = \mathrm{e}(E)$. \qed
\end{theorem}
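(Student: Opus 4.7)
The forward direction is immediate: a complex structure $J$ on $E$ is in particular a stable complex structure (take $j=0$), and the top Chern class of a rank-$k$ complex bundle coincides with the Euler class of its underlying oriented rank-$2k$ real bundle, so $c_k(J) = \mathrm{e}(E)$ automatically. The substance lies in the reverse direction.

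The plan for the reverse direction is first to destabilise the given stable complex structure to a genuine rank-$k$ complex bundle $F$ over $B$. Since $\dim B \leq 2k$ and the fibre of $BU_k \to BU$ is $2k$-connected with first nonzero homotopy group in degree $2k{+}1$, all obstructions to lifting the classifying map $B \to BU$ through $BU_k \to BU$ vanish for dimensional reasons, so such an $F$ exists. The underlying oriented real bundle $F_\R$ is then a rank-$2k$ bundle stably equivalent to $E$. By Lemma \ref{l:E+F} there are $2k$-cells $\alpha_1, \ldots, \alpha_n$ and integers $j_1, \ldots, j_n$ with
\[ F_\R \cong E \#_{\alpha_1} j_1 TS^{2k} \#_{\alpha_2} \cdots \#_{\alpha_n} j_n TS^{2k}, \]
and with cocycle representatives satisfying $\cc(\mathrm{e}(F_\R)) - \cc(\mathrm{e}(E)) = 2\sum_i j_i e^{2k*}_{\alpha_i}$. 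The hypothesis $c_k(J) = \mathrm{e}(E)$ gives $\mathrm{e}(F_\R) = c_k(F) = c_k(J) = \mathrm{e}(E)$ in $H^{2k}(B;\Z)$, so $2\bigl[\sum_i j_i e^{2k*}_{\alpha_i}\bigr] = 0$; the $2$-torsion-free hypothesis then forces $\bigl[\sum_i j_i e^{2k*}_{\alpha_i}\bigr] = 0$ as well.

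The final step will be to conclude from this vanishing that $F_\R \cong E$ as oriented real bundles, so that the complex structure on $F$ transports along the isomorphism to one on $E$. I intend to appeal to primary obstruction theory for $BSO_{2k} \to BSO$: the kernel of the stabilisation $\pi_{2k-1}(SO_{2k}) \to \pi_{2k-1}(SO)$ is precisely $\Z(\tau_{2k})$, by the splitting $\pi_{2k-1}(SO_{2k}) \cong \Z(\tau_{2k}) \oplus C(\sigma_{2k})$ recalled in Section \ref{s:2k_over_S2k} (equivalently, $\tau_{2k}$ generates the kernel of $\pi_{2k-1}(SO_{2k}) \to \pi_{2k-1}(SO_{2k+1})$ as the image of $\partial$ in the long exact sequence of $SO_{2k} \to SO_{2k+1} \to S^{2k}$). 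Hence the primary obstruction to an isomorphism between two stably equivalent rank-$2k$ oriented bundles on $B$ lies in $H^{2k}(B;\Z(\tau_{2k})) \cong H^{2k}(B;\Z)$, and for the bundles $F_\R$ and $E$ it is represented by $\bigl[\sum_i j_i e^{2k*}_{\alpha_i}\bigr]$, which has just been shown to vanish. Higher obstructions are automatically zero by the dimension hypothesis $\dim B \leq 2k$, yielding $F_\R \cong E$.

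The main obstacle will be the identification in the last paragraph: verifying that the cochain $\sum_i j_i e^{2k*}_{\alpha_i}$ produced by Lemma \ref{l:E+F} genuinely represents the primary obstruction class in $H^{2k}(B;\Z(\tau_{2k}))$. Lemma \ref{l:E+F} only records the change in the Euler cocycle under cell-wise connected sum with $TS^{2k}$, so passing from this cochain-level description to the obstruction-theoretic statement requires a careful comparison between the connected-sum construction and the effect on the classifying map $B \to BSO_{2k}$ of modifying it over a $2k$-cell by the element $\tau_{2k} \in \pi_{2k-1}(SO_{2k})$. Once this identification is established, the remainder of the argument is formal.
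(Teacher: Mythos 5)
The paper does not actually give a proof of this theorem: it cites Thomas~\cite{t67} and remarks only that ``Lemma~\ref{l:E+F} can be used to prove'' it. Your reconstruction is exactly the argument that remark is pointing to, and it is correct: destabilise $J$ to a rank-$k$ complex bundle $F$ over $B$ via the $2k$-connectivity of the homotopy fibre of $BU_k \to BU$ and the standing hypothesis $\dim B \le 2k$; invoke Lemma~\ref{l:E+F} to write $F_\R$ as a cell-wise connected sum of $E$ with $j_i\,TS^{2k}$'s; use $\mathrm{e}(F_\R) = c_k(F) = c_k(J) = \mathrm{e}(E)$ together with the $2$-torsion-free hypothesis to conclude $[\sum_i j_i e^{2k*}_{\alpha_i}] = 0$; and finish by obstruction theory for $BSO_{2k} \to BSO$.

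The ``main obstacle'' you flag is a real verification but is routine once phrased correctly: by construction $E \#_{\alpha} j\,TS^{2k}$ is literally equal to $E$ over $B^{\circ} \supseteq B^{(2k-1)}$, and extending this identity over the remaining disc $D^{2k}_{\alpha}$ has obstruction exactly $j\tau_{2k} \in \ker\bigl(\pi_{2k-1}(SO_{2k}) \to \pi_{2k-1}(SO)\bigr) \cong \pi_{2k}(SO/SO_{2k}) \cong \Z$, which is the coefficient group for the primary (and, since $\dim B \le 2k$, only) obstruction for the fibration $BSO_{2k} \to BSO$. So the cochain $\sum_i j_i e^{2k*}_{\alpha_i}$ is the obstruction cochain on the nose. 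One loose end worth tightening: for $F_\R$ to be stably equivalent to $E$ \emph{as oriented bundles} and for the chain of equalities $\mathrm{e}(F_\R) = c_k(F) = c_k(J) = \mathrm{e}(E)$ to hold with the stated signs, you should take $J$ to be a compatible stable complex structure in the sense the paper introduces just before Theorem~\ref{t:TC1} (or, if it is not compatible, replace $E$ by $\bar E$ throughout); with that convention made explicit the argument is complete.
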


\begin{proof}[Proof of Theorem~\ref{t:TC1}]
Recall the universal rank-$2k$ turning bundle $BT_{2k} = ESO_{2k} \times_{SO_{2k}} \Omegapmone \to BSO_{2k}$ from Definition~\ref{d:univ-turn}
and define $BT := ESO \times_{SO} \Omega_{\pm \idbb} SO \to BSO$ to be the universal stable turning bundle.
There is a natural map $BT_{2k} \to BT$ and 
by the results of Section~\ref{s:stable}, 
$BT \simeq BU$.  Hence we consider the commutative diagram,
\[
\xymatrix{
& 
BU_k \ar[d] \ar[r] &
BU \ar[d]^\simeq \\
& 
BT_{2k} \ar[d] \ar[r]^S &
BT \ar[d] \\
B \ar[r]^(0.425)f \ar[ur]^{\bar f} &
BSO_{2k} \ar[r] &
BSO,
}\]
where $f$ classifies $E$ and $\bar f$ classifies a positive-turning on $E$.
Since the natural map $BU \to BT$ is a fibre homotopy equivalence over $BSO$,
the turning on $E$ induces a stable complex structure $J$ on $E$.
Consider the lifting problem:
\[
\xymatrix{
& 
BU_k \ar[d]\\
B \ar[r]^{S \circ \bar f} \ar@{.>}[ur] & 
BT
} \]
Since the homotopy fibre of $BU_k \to BT$ is $2k$-connected, the map 
$J : = S \circ \bar f \colon Y \to BT$
has a lift to $J' \colon Y \to BU_k$, which is unique up to homotopy.
The map $J'$ defines a complex rank-$k$ bundle $(E', J')$ over $X$, 
where $E'$ is stably equivalent to $E$.

By Lemma~\ref{l:E+F},
there is a bundle isomorphism 
\begin{equation} \label{eq:EtoE'}
\alpha \colon E \to E' \#_{\alpha_1} j_1 TS^{2k} \# \dots \#_{\alpha_n} j_n TS^{2k}.
\end{equation}
We set $B^\circ := B \setminus \left( \bigcup_{i=1}^n \mathrm{Int}(D^{2k}_{\alpha_i}) \right)$,
$E^\circ := E|_{B^\circ}$,
$E'^\circ := E'|_{B^\circ}$ and $\alpha^\circ := \alpha|_{E^\circ}$.
The lift $\bar f$ defines a turning $\psi_t$ on $E$, which restricts to a turning 
$\psi_t^\circ$ on $E^\circ$ and the complex structure $J'$ defines a turning $\psi'_t$ on
$E'$, which restricts to a turning $\psi_t'^\circ$ on $E'^\circ$ that pulls back along $\alpha^\circ$
to a turning $(\alpha^\circ)^*(\psi_t'^\circ)$ on $E^\circ$.

If $\psi_t^\circ$ and $(\alpha^\circ)^*(\psi_t'^\circ)$ are equivalent turnings on $E^\circ$,
then the obstruction to extending $(\alpha^\circ)^*(\psi_t'^\circ)$ to $E$ vanishes.
On the other hand, the obstruction to extending $\psi_t'^\circ$ to $E'$ vanishes.
It follows that the cocycle $\sum_{i=1}^n j_i \TO_+(\tau_{2k})e^{2k*}_{\alpha_i}$
represents $0$ in $H^{2k}(X; \pi_{2k}(SO_{2k}))$ and so
the cocycle $\sum_{i=1}^n j_i 2e^{2k*}_{\alpha_i}$ represents $0$ in $H^{2k}(B; \Z/4)$
and hence $\rho_4(c_k(J)) = \rho_4(\mathrm{e}(E))$.

If $\psi_t^\circ$ and $(\alpha^\circ)^*(\psi_t'^\circ)$ are not equivalent turnings on $E^\circ$,
then they differ over the $(2k{-}2)$-cells and $(2k{-}1)$-cells of $B^\circ$.
If $k$ is even, this variation does not effect the cohomology class represented by
$\sum_{i=1}^n j_i  \mathrm{ev}_*(\TO_+(\tau_{2k})e^{2k*}_{\alpha_i})$.
If $k$ is odd, then changing the turning can alter the values of 
the turning obstruction over the cells $e^{2k}_{\alpha_i}$ by any element of
$((\times 2) \circ Sq^2 \circ \rho_2)(H^{2k-2}(B; \Z))$.
\end{proof}

\begin{proof}[Proof of Theorem~\ref{t:TC2}]
Suppose that $E$ admits a stable complex structure $J$.
Then, as in the proof of Theorem~\ref{t:TC1}, $J$ descends to a complex rank-$k$ bundle
$(E', J')$ on $X$ such that $E$ and $E'$ are stably isomorphic.
Since $(E', J')$ is a complex bundle, $E'$ is positive-turnable.  
Consider the bundle isomorphism $\alpha$ from \eqref{eq:EtoE'}.
Assume first $k$ is even and that $H^{2k}(X; \Z)$ contains no $2$-torsion.
Since $J'$ stabilises to $J$,
$c_k(J') = c_k(J)$ and so $\rho_4(c_k(J')) = \rho_4(c_k(J)) =  \rho_4(\mathrm{e}(E))$.
It follows that each $j_i$ in \eqref{eq:EtoE'} is even.
Now $2 TS^{2k}$ is positive-turnable by Theorem~\ref{t:to_S2k}.
Since $E'$ is positive-turnable, 
it follows that $E$ is positive-turnable.

When $k$ is odd the argument is similar to the case where $k$ is 
even but requires adjustments.  We first note that
$\rho_2(c_k(J')) = w_{2k}(E) = \rho_2(\mathrm{e}(E))$.
Moreover, for $k$ odd $\TO_+(\tau_{2k}) = \tau_{2k} \eta_{2k-1} \in 2 \pi_{2k}(SO_{2k})$.
It follows that if some $j_i$ in \eqref{eq:EtoE'} is odd, then
we can modify the turning of $E'$ to ensure that the pull-back of its restriction
to $E'^\circ$ extends to all of $E$.
\end{proof}

\begin{proof}[Proof of Theorem~\ref{t:4} and Remark~\ref{r:1}]
These statements follow from Theorems~\ref{t:TC1} and~\ref{t:TC2} and the fact that a turnable bundle
over a connected space is either positively turnable or negatively turnable.
\end{proof}

\begin{question}
From the proof of Theorem~\ref{t:TC1} we see that there is a map $BT_{2k} \to BU$,
obtained by composing the stabilisation map $BT_{2k} \to BT$ with a homotopy inverse
of the natural map $BU \to BT$.
This map $BT_{2k} \to BU$ encodes the fact that a turned bundle $(E, \psi_t)$ over a finite 
$CW$-complex defines a stable
turning of $E$ and a stable complex structure $J$ on $E \oplus \ul \R^j$ for some $j$.
It follows that the universal bundle $VT_{2k} \to BT_{2k}$ has a well-defined homotopy class
of stable complex structure and hence there are well-defined Chern classes
$c_i(VT_{2k}) \in H^{2i}(BT_{2k}; \Z)$.

We pose the following question: What special properties, if any, do the Chern classes of $VT_{2k}$ possess?
For example, one would expect that $c_i(VT_{2k}) = 0$ for all $i > k$
and presumably also that there is an equality
 $[\rho_4(c_k(VT_{2k}))] = [\rho_4(\mathrm{e}(VT_{2k}))] \in H^{2k}(BT_{2k}; \Z/4)/I^{2k}(BT_{2k})$.
\end{question}

\begin{multicols}{2}[]

\bigskip
\noindent
\emph{Diarmuid Crowley}\\
  \vskip -0.125in \noindent
  {\small
  \begin{tabular}{l}
    School of Mathematics and Statistics\\
    University of Melbourne\\
    Parkville, VIC, 3010, Australia\\
    \textsf{dcrowley@unimelb.edu.au}
	
  \end{tabular}}

\bigskip
\noindent
\emph{Blake Sims}\\
  \vskip -0.125in \noindent
  {\small
  \begin{tabular}{l}
  School of Mathematics and Statistics\\
    University of Melbourne\\
    Parkville, VIC, 3010, Australia\\
    \textsf{blake.sims@unimelb.edu.au}
	
  \end{tabular}} 

\bigskip
\noindent
\emph{Csaba Nagy}\\  
\vskip -0.125in \noindent
{\small
  \begin{tabular}{l}
   School of Mathematics and Statistics\\
   University of Glasgow\\
   University Place, Glasgow G12 8QQ, United Kingdom\\
   \textsf{csaba.nagy@glasgow.ac.uk}
   
  \end{tabular}}

\bigskip
\noindent
\emph{Huijun Yang}\\  
\vskip -0.125in \noindent
{\small
  \begin{tabular}{l}
   School of Mathematics and Statistics\\
   Henan University\\
   Kaifeng, Henan, 475004, China\\
   
    \textsf{yhj@amss.ac.cn}
    
  \end{tabular}}
\end{multicols}
\end{document}